\documentclass[11pt]{article}
\usepackage{graphicx} 
\usepackage{amssymb, amsmath, amsfonts,latexsym, oldgerm,amscd,amsthm}
\usepackage [english]{babel}
\usepackage[utf8]{inputenc}
\usepackage [autostyle, english = american]{csquotes}
\MakeOuterQuote{"}
\usepackage{relsize}
\usepackage{mathtools}
\usepackage{url}
\usepackage[makeroom]{cancel}
\usepackage[margin=2.8cm]{geometry}
\usepackage{comment}

\newtheorem{theorem}{Theorem}[section]
\newtheorem{lemma}[theorem]{Lemma}

\newtheorem{definition}[theorem]{Definition}

\newtheorem{remark}[theorem]{Remark}

\newcommand{\ETrans}{\mathrm{ETrans}}
\newcommand{\Trans}{\mathrm{Trans}}
\newcommand{\En}{\mathrm{E}}
\newcommand{\ESp}{\mathrm{ESp}}
\newcommand{\ETransSp}{\mathrm{ETrans_{Sp}}}   
\newcommand{\TransSp}{\mathrm{Trans_{Sp}}}   
\newcommand{\EO}{\mathrm{EO}}
\newcommand{\ETransO}{\mathrm{ETrans_{O}}}   
\newcommand{\TransO}{\mathrm{Trans_{O}}}

\title{Normality of Relative Elementary Tranvection Groups}
\author{Sunil Rampuria, Ruddarraju Amrutha, Pratyusha Chattopadhyay}

\begin{document}

\maketitle

\begin{abstract}
\noindent A. A. Suslin and V. I. Kopeiko proved that the elementary linear, symplectic, and orthogonal groups are normal in the general linear group, symplectic group, and orthogonal group respectively. They also proved relative versions of these normality results with respect to an ideal of a ring. A. Bak, R. Basu, and R. A. Rao proved that the linear, symplectic, and orthogonal transvection groups are normal in the respective automorphism groups. These results are generalizations of the normality results by Suslin and Kopeiko in the setup of modules. In this paper, we prove stronger versions of the normality results proved by Bak, Basu, and Rao, which say that the relative transvection groups are normal in the respective automorphism groups.
\end{abstract}

\vskip 4mm
\noindent \textbf{Keywords:} Elementary groups, linear transvection group, symplectic
transvection group, orthogonal transvection group.\\

\vskip 4mm
\noindent \textbf{Mathematics Subject Classification (2020):} 13B30, 13C10, 15A63, 19B14, 20H25 \\

\section{Introduction}

\noindent In 1955, J-P. Serre in \cite{Serre} posed a fundamental question: is every finitely generated projective module over a polynomial ring over a field necessarily free? Known as Serre's problem on projective modules, this conjecture profoundly shaped the development of algebraic $K$-theory. It was affirmatively proved independently by A. A. Suslin in \cite{SusVas} and D. Quillen in \cite{Quil} in 1976. Their work initiated a systematic exploration of the structure of linear groups over rings. In 1977, using his solution to Serre's conjecture, Suslin (in \cite{Sus}) established the $K_1$-analogue of Serre's conjecture, proving that for any field $k$ and \( n \geq 3 \), every element in the 
special linear group \( \operatorname{SL}_n(k[x_1, \dots ,x_r]) \) can be written as a product of elementary linear matrices,
thereby extending classical results about matrices over fields to polynomial rings. 

\vskip 3mm
\noindent A cornerstone of this theory is the study of the normality theorems which assert that the elementary subgroup is normal within the full general linear, symplectic, and orthogonal group. For the general linear group over a commutative ring $R$ and an ideal $I$ of $R$, the normality of $\operatorname{E}_n(R,I)$ in $\operatorname{GL}_n(R)$, for $n \geq 3$ was proved by Suslin in \cite{Sus} in 1977 . Analogous results were subsequently developed by V. I. Kopeiko for the elementary symplectic group in \cite{Kop} in 1978, and for the elementary orthogonal group by Kopeiko and Suslin in \cite{KopSus} in 1982.

\vskip 3mm
\noindent In 1964, H. Bass introduced two types of linear transvections of a projective module $R$ $\oplus$ $P$ in his foundational work on the cancellation of projective modules (in \cite{Bass}). This construction provided the algebraic machinery needed to formulate and prove some of the "classical" results in $K$-theory (see \cite{Bass3}, \cite{Swan}). Later in 1973, Bass in \cite{Bass3} extended this approach, introducing two analogous types of symplectic transvections of a symplectic module $\mathbb{H}(R) \oplus (P, \langle , \rangle)$, and two types of orthogonal transvections of an orthogonal module $\mathbb{H}(R) \oplus (P, \langle , \rangle)$, establishing a parallel framework for modules equipped with a form (these are recalled in Section~3, Section~4 and Section~5). These generalize the classical elementary generators, and open the door to studying linear, symplectic, and orthogonal groups over projective modules.

\vskip 3mm
\noindent Despite their distinct origins, the structural theory of Bass’s transvection groups is connected to the classical normality results of Suslin and Kopeiko. A natural question arises: can the classical normality theorems be extended to these more general transvection groups? A significant step in this direction was taken in 2010 by A. Bak, R. Basu, and R.A. Rao, who in \cite{BakBasuRao} established a local-global principle for the transvection groups in the absolute case to prove the normality of the transvection groups in the absolute case. 

\vskip 3mm
\noindent In this paper, we generalize the classical normality theorems to Bass's relative transvection groups. We prove that for any commutative ring $R$, an ideal $I$ of $R$, and any finitely generated projective $R$-module $M$, the group $\operatorname{ETrans}(M,IM)$ is normal in $\operatorname{Aut}(M)$. Furthermore, we establish analogous normality results for the elementary symplectic transvection group within the full symplectic group of $\mathbb{H}(R) \oplus (P, \langle , \rangle)$, and for the elementary orthogonal transvection group within the full orthogonal group. We give two different proofs of these results; one through direct computation, and the other by leveraging the relative version of the local-global principle stated by Bak, Basu, and Rao. These results serve as a natural extension of the theorems of Suslin and Kopeiko from the setting of free modules to the broader context of projective modules equipped with a form, thereby unifying and advancing the structural study of linear, symplectic, and orthogonal groups over commutative rings.

\vskip 3mm
\noindent Although direct computation suffices to prove the normality for the relative case, our primary motivation for including the local-global proof is methodological.
It replicates the successful strategy from the absolute case, thereby establishing a consistent framework for proving normality across our entire family of groups. This alternative proof highlights the structural role of the local-global principle, which will be essential for future developments in this area of mathematics.

\section{Preliminaries}

\noindent Throughout, $R$ will represent a commutative ring with unity, and $I$ will denote an ideal of $R$. We let $R^n$ denote the space of column vectors of length $n$ with entries in $R$. The ring of matrices of size $r \times s$ with entries in $R$ is denoted by $M_{rs}(R)$. The ring of matrices of size $n \times n$ with entries in $R$ is denoted by $M_n(R)$. For $\alpha \in M_{rs}(R)$, we will denote by $\alpha^{\top},$ the transpose of $\alpha$, which is a matrix in $M_{sr}(R)$. The identity matrix of size $n \times n$ is denoted by $I_n$, and $e_{ij}$ denotes the $n \times n$ matrix which has 1 in the $(i, j)$-th position and $0$ everywhere else. For $\alpha \in M_m(R)$, and $\beta \in M_n(R)$, the matrix $\begin{pmatrix}
\alpha & 0 \\
0 & \beta
\end{pmatrix}$, which is an element of $M_{m+n}(R)$, is denoted by $\alpha \perp \beta$. If $\mathfrak{P}$ is a prime ideal of $R$, the localization of $R$ at $\mathfrak{P}$ is denoted by $R_\mathfrak{P}$. For an $R$-module $M$, $1\in\mathrm{Aut}(M)$ denotes the identity map from $M$ to $M$.

\vskip 2mm
\begin{definition} 
\rm{For $n\geq 2$, the \textit{elementary linear group} E$_{n}(R)$ is the subgroup of SL$_{n}(R)$ generated by the elementary matrices $E_{ij}(a) = I_{n} + ae_{ij}$, where $i\neq j$ and $a\in R$. 

\vskip 2mm 
\noindent
The \textit{true relative elementary subgroup} E$_{n}(I)$ is defined to be the subgroup of E$_{n}(R)$ generated by the elementary matrices $E_{ij}(x)$ with $x$ $\in$ $I$ (See \cite{Nica}).

\vskip 2mm 
\noindent
The \textit{relative elementary subgroup} E$_{n}(R,I)$ is the normal closure of E$_{n}(I)$ in E$_{n}(R)$. This is generated by elements of the form $E_{kl}(a)E_{ij}(x)E_{kl}(-a)$, where $a \in R$, and $x \in I$ (see \cite{Vas}, Lemma 8).}

\end{definition}
\vskip 3mm
\begin{definition} 
\rm{Given an invertible alternating matrix $\varphi$ of order $2n$, \[\mathrm{Sp}_{\varphi}(R)\coloneqq\left\{M\in\text{GL}_{2n}(R)\mid M^{\top}\varphi M=\varphi \right\}.\]

\vskip 2mm
\noindent The matrix $\psi_{n}$ = $\sum\limits_{i=1}^{n} e_{2i-1,2i} - \sum\limits_{i=1}^{n} e_{2i,2i-1}$ is called the \textit{standard alternating matrix} of order $2n$. 
\vskip 2mm
\noindent Also note that $\psi_{n}=\begin{pmatrix}
\psi_{n-1} & 0 \\
0 & \psi_{1}
\end{pmatrix}$, where $\psi_{1}=\begin{pmatrix}
0 & 1 \\
-1 & 0
\end{pmatrix}$.

\vskip 3mm
\noindent \textit{Symplectic group} of order $2n$ is defined as Sp$_{2n}(R)\coloneqq\left\{M\in\text{GL}_{2n}(R)\mid M^{\top}\psi_{n}M=\psi_{n}\right\}$. An element of the symplectic group is called a \textit{symplectic matrix}.}
\end{definition}

\vskip 2mm
\begin{definition}  
\rm{ Let $\sigma$ be the permutation of $\{1,2,\dots,2n\}$ defined as $\sigma(2i) = 2i-1$, $\sigma(2i-1) = 2i$. For $z \in R$, $1\leq i\neq j \leq 2n$, we set up (as in \cite{Kop})
	\begin{center}
	    $se_{ij}(z)$=
    $\begin{cases}
      I_{2n} + ze_{ij} & \text{if } i = \sigma(j),\\
      I_{2n} + ze_{ij} -(-1)^{i+j}ze_{\sigma(j)\sigma(i)} & \text{if } i \neq \sigma(j).\\
    \end{cases}$
    \end{center}
\vskip 2mm 

\noindent Note that $se_{ij}(z)$ belong to Sp$_{2n}(R)$, and are called \textit{elementary symplectic matrices}. Subgroup of Sp$_{2n}(R)$ generated by $se_{ij}(z)$ is called \textit{elementary symplectic group}, and is denoted by ESp$_{2n}(R)$.

\vskip 2mm
\noindent The \textit{true relative elementary symplectic subgroup} ESp$_{2n}(I)$ is defined to be the subgroup of ESp$_{2n}(R)$ generated by the elementary symplectic matrices $se_{ij}(x)$, where $x\in I$.

\vskip 2mm
\noindent The \textit{relative elementary symplectic subgroup} ESp$_{2n}(R,I)$ is the normal closure of ESp$_{2n}(I)$ in ESp$_{2n}(R)$. This is generated by elements of the form $se_{kl}(a)se_{ij}(x)se_{kl}(-a)$, where $a \in R$, and $x \in I$.
}
\end{definition}

\vskip 3mm

\begin{definition} \rm{Given an invertible symmetric matrix $\widetilde{\varphi}$ of order $2n$, \[\mathrm{O}_{\widetilde{\varphi}}(R)\coloneqq\left\{M\in\text{GL}_{2n}(R)\mid M^{\top}\widetilde{\varphi} M=\widetilde{\varphi} \right\}.\]

\vskip 2mm
\noindent The matrix $\widetilde{\psi}_{n}$ = $\sum\limits_{i=1}^{n} e_{2i-1,2i} + \sum\limits_{i=1}^{n} e_{2i,2i-1}$ is called the \textit{standard symmetric matrix} of order $2n$.

\vskip 2mm
\noindent Also note that $\widetilde{\psi}_{n}=\begin{pmatrix}
\widetilde{\psi}_{n-1} & 0 \\
0 & \widetilde{\psi}_{1}
\end{pmatrix}$, where $\widetilde{\psi}_{1}=\begin{pmatrix}
0 & 1 \\
1 & 0
\end{pmatrix}$.

\vskip 2mm
\noindent \textit{Orthogonal group} of order $2n$ is defined as O$_{2n}(R)=\left\{M\in\text{GL}_{2n}(R)\mid M^{\top}\widetilde{\psi}_{n}M=\widetilde{\psi}_{n}\right\}$. An element of the orthogonal group is called an \textit{orthogonal matrix}.}
\end{definition}

\vskip 3mm
\begin{definition} \rm{For $z \in R$, $1\leq i\neq j \leq 2n, i\neq \sigma(j)$, set up (as in \cite{KopSus})
\[oe_{ij}(z) = I_{2n} + ze_{ij} - ze_{\sigma(j)\sigma(i)}.\] 

\vskip 3mm
\noindent Note that $oe_{ij}(z)$ belong to O$_{2n}(R)$, and are called \textit{elementary orthogonal matrices}. Subgroup of O$_{2n}(R)$ generated by $oe_{ij}(z)$ is called elementary orthogonal group, and is denoted by EO$_{2n}(R)$.

\vskip 2mm
\noindent The \textit{true relative elementary orthogonal subgroup} EO$_{2n}(I)$ is defined to be the subgroup of EO$_{2n}(R)$ generated by the elementary orthogonal matrices $oe_{ij}(x)$, where $x \in$ $I$.

\vskip 2mm
\noindent The \textit{relative elementary orthogonal subgroup} EO$_{2n}(R,I)$ is the normal closure of EO$_{2n}(I)$ in EO$_{2n}(R)$. This is generated by elements of the form $oe_{kl}(a)oe_{ij}(x)oe_{kl}(-a)$, where $a \in R$, and $x \in I$.}
\end{definition}

\section{Linear Transvections}

In \cite{BakBasuRao}, Bak, Basu, and Rao proved that ETrans$(M)$ is a normal subgroup of Aut$(M)$. In this section, we extend this result to a relative case with respect to an ideal $I$ of $R$ (see Theorem \ref{RLT normality}). We will show that the relative elementary transvection group ETrans$(M,IM)$ is normal in Aut$(M)$. We give two proofs of this result, one through direct computation of conjugates, and the other using a local-global principle for the relative elementary transvection group.

\begin{definition} \rm{ An element $v=(v_{1},\dots,v_{n})^{\top}\in R^{n}$ is said to be \textit{unimodular} if there are elements $w_{1},\dots,w_{n}\in R$ such that $\sum\limits_{i=1}^{n}v_{i}w_{i}=1$. We denote by Um$_{n}(R)$, the set of all unimodular elements $v\in R^{n}$. Set of all unimodular elements in $R^{n}$ which are congruent to $e_{1}=(1,0,\dots,0)^{\top}$ modulo $I$ is denoted by Um$_{n}(R,I)$ (if $I=R$, then Um$_{n}(R,I)$ is Um$_{n}(R)$).}
\end{definition}

\vskip 2mm
\begin{definition} \rm{ Given any finitely generated (in particular, finitely generated projective) $R$-module $P$, an element $p\in P$ is said to be \textit{unimodular} if there exists an $R$-linear map $\theta:P\to R$ such that $\theta(p) = 1$. Equivalently, $p$ is unimodular if $Rp$ is a free submodule of rank $1$, and is a direct summand of $P$. The collection of all unimodular elements of $P$ is denoted by Um$(P)$.

\vskip 2mm
\noindent Let $P$ be of the form $R\oplus Q$, and have the element of the form $(1,0)$ which corresponds to the unimodular element. An element $(a,q)\in P$ is said to be relatively unimodular with respect to $I$ if $(a,q)$ is unimodular and congruent to $(1,0)$ modulo $IP$. The collection of all relative unimodular elements with respect to $I$ is denoted by Um$(P,IP)$. }
\end{definition}

\vskip 2mm
\noindent Following Bass ($\cite{Bass}$), we recall the definition of linear transvection group here.

\vskip 2mm
\begin{definition} \rm{ Let $M$ be a finitely generated $R$-module. Let $q\in M$, and $\pi \in M^{*}=$ Hom$(M,R)$, with $\pi(q)=0$. Let $\pi_{q}:M\to M$, be defined as $\pi_{q}(p)=\pi(p)q$ for any $p\in M$. An automorphism of the form $1+\pi_{q}$ is called a \textit{linear transvection} of $M$ if either $q\in$ Um$(M)$ or $\pi \in$ Um$(M^{*})$. Subgroup of Aut$(M)$ generated by these transvections is denoted by Trans$(M)$. 

\vskip 2mm
\noindent The group of \textit{relative transvections} with respect to an ideal $I$ is generated by the transvections of the form $1+\pi_{q}$, where either $q\in IM$ or $\pi \in IM^{*}$. The group generated by relative transvections is denoted by Trans$(M, IM)$.
}\end{definition}

\vskip 2mm
\begin{definition} \rm{ Let $M$ be a finitely generated $R$-module. The automorphism of $N=(R\oplus M)$ of the form \[(a,p)^{\top}\mapsto (a,p+ax)^{\top},\] or of the form \[(a,p)^{\top}\mapsto (a+\tau(p),p)^{\top},\] where $x\in M$, and $\tau\in M^{*}$ are called \textit{elementary transvections} of $N$. We denote the first automorphism by $E_{x}$, and the second one by $E_{\tau}^{*}$. It can be checked that these are transvections of $N$. Consider $\pi(t,y)=t,\ q=(0,x)$ to get $E_{x}$, and consider $\pi(a,p)=\tau(p),\ q=(1,0)$ to get $E_{\tau}^{*}$. The subgroup of Trans$(N)$ generated by elementary transvections is denoted by ETrans$(N)$.

\vskip 2mm
\noindent The elementary transvections of $N=(R\oplus M)$ of the form $E_{x},E_{\tau}^{*}$ where $x\in IM$, and $\tau \in IM^{*}$ are called \textit{relative elementary transvections} with respect to an ideal $I$, and the group generated by them is denoted by ETrans$(IN)$. The normal closure of ETrans$(IN)$ in ETrans$(N)$ is denoted by ETrans$(N,IN)$. }\end{definition}

\vskip 2mm
\noindent The following result proved by Suslin is a particular case of Theorem \ref{RLT normality}, in the case of a free module.

\begin{lemma}[\cite{Sus}, Corollary 1.4]
\label{En normality}
    $\mathrm{E}_{n}(R,I)$ is normal in $\mathrm{GL}_{n}(R)$, for $n \geq 3$.
\end{lemma}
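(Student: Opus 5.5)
We plan to follow Suslin's approach: first prove the absolute statement that $\mathrm{E}_n(R)$ is normal in $\mathrm{GL}_n(R)$ for $n\geq 3$, then bootstrap to the relative group $\mathrm{E}_n(R,I)$. For the absolute case, the key identity is that for a column $v\in R^n$ and a row $w$ with $wv=0$, the matrix $I_n+vw$ lies in $\mathrm{E}_n(R)$ as soon as some coordinate of $w$ vanishes, or more generally whenever $v$ is unimodular. Since $\mathrm{GL}_n(R)$ acts transitively enough, for $\alpha\in\mathrm{GL}_n(R)$ we can write
\[\alpha E_{ij}(a)\alpha^{-1}=I_n+a\,\alpha_{*i}\,(\alpha^{-1})_{j*},\]
where $v=\alpha_{*i}$ is unimodular (a column of an invertible matrix), $w=a(\alpha^{-1})_{j*}$, and $wv=0$ because $i\neq j$. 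It therefore suffices to show that $I_n+vw\in\mathrm{E}_n(R)$ whenever $v$ is unimodular and $wv=0$.

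To handle this, we choose $u$ with $\sum u_kv_k=1$. Then $w$ can be written as a sum of rows $w=\sum_{k<l}a_{kl}(v_le_k^{\top}-v_ke_l^{\top})$, where $e_k^{\top}$ denotes the $k$-th standard row vector; explicitly we take $a_{kl}=w_ku_l-w_lu_k$, and a short computation using $wv=0$ verifies this. Each summand $w_{kl}$ satisfies $w_{kl}v=0$, so $I_n+vw=\prod_{k<l}(I_n+a_{kl}vw_{kl})$. For a single term, $w_{kl}$ has at most two nonzero entries, which is where $n\geq3$ enters. The matrix $I_n+a\,v(v_le_k^{\top}-v_ke_l^{\top})$ is a product of elementary matrices via Suslin's trick. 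Writing $v=v_ke_k+v_le_l+v'$, we can expand it as a product of commutators of elementary matrices; one conjugates by the elementary matrices acting in the $k,l$ coordinates and uses a third index $m\notin\{k,l\}$ to realize the required $2\times2$ block as a commutator. This is the standard computation we would reproduce.

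For the relative statement, note that $\mathrm{E}_n(R,I)$ is generated by the elements $E_{kl}(a)E_{ij}(x)E_{kl}(-a)$. By the absolute normality just proved, conjugating such an element by $\alpha\in\mathrm{GL}_n(R)$ reduces to showing that $\alpha E_{ij}(x)\alpha^{-1}\in\mathrm{E}_n(R,I)$ for $x\in I$; indeed $\alpha E_{kl}(a)\alpha^{-1}\in\mathrm{E}_n(R)$ normalizes $\mathrm{E}_n(R,I)$, since the latter is by definition normal in $\mathrm{E}_n(R)$. Now $\alpha E_{ij}(x)\alpha^{-1}=I_n+vw$ with $v$ unimodular and $w\in I^n$. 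Using the same decomposition, the coefficients $a_{kl}$ lie in $I$, and we need the refined fact that $I_n+a\,v(v_le_k^{\top}-v_ke_l^{\top})$ with $a\in I$ lies in $\mathrm{E}_n(R,I)$. We would redo the commutator factorization while tracking that, in every commutator $[E_{pq}(r),E_{st}(y)]$ appearing, one of the entries lies in $I$. Such commutators, and conjugates of $E_{st}(y)$ with $y\in I$, lie in $\mathrm{E}_n(R,I)$.

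The main obstacle is this explicit bookkeeping: we must ensure that in Suslin's factorization of $I_n+a\,v(v_le_k^{\top}-v_ke_l^{\top})$ the ideal element $a$ can always be placed into an elementary factor that is then only conjugated by elements of $\mathrm{E}_n(R)$. We would first try to write this matrix as $\beta E_{ml}(\ast a)\beta^{-1}$ times similar terms, with $\beta\in\mathrm{E}_n(R)$ built from $v$, which would settle the matter directly.
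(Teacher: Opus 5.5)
Your proposal is correct. It is essentially Suslin's own argument; the paper gives no proof of this lemma and simply cites Suslin's Corollary 1.4. The relative bookkeeping you flag closes immediately: for $y=a(v_le_k^{\top}-v_ke_l^{\top})$ with $a\in I$, $m\notin\{k,l\}$ and $\hat v=v-v_me_m$, one has $I_n+vy=(I_n+v_me_my)\,\bigl(XYX^{-1}Y^{-1}\bigr)$, where $X=I_n+\hat v e_m^{\top}\in\mathrm{E}_n(R)$ and both $Y=I_n+e_my$ and $I_n+v_me_my$ lie in $\mathrm{E}_n(I)$, so every factor lies in $\mathrm{E}_n(R,I)$.
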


\noindent The following result shows that the linear transvection groups are generalizations of the elementary linear group in the setup of modules.

\begin{lemma}[\cite{ChatRao}, Lemma 4.5]
\label{relt=rlt=En}   
Let $I$ be an ideal of $R$. Let $M$ be a free $R$-module of rank $n \geq 2$, and let $N = (R \oplus M)$. Then
\[
\ETrans(N, IN) = \Trans(N, IN) = \En_{n+1}(R, I).
\]
\end{lemma}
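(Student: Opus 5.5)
Fix a basis $e_1,\dots,e_n$ of $M$, so that $N=R\oplus M\cong R^{n+1}$ with basis $e_0=(1,0),e_1,\dots,e_n$. Under this identification $\mathrm{Aut}(N)=\mathrm{GL}_{n+1}(R)$ and $N^*\cong R^{n+1}$ via the dual basis. We will prove the chain of equalities through a cycle of inclusions:
\[
\En_{n+1}(R,I)\subseteq \ETrans(N,IN)\subseteq \Trans(N,IN)\subseteq \En_{n+1}(R,I).
\]

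\textbf{First inclusion.} Take $x=a e_j\in IM$, so $a\in I$. Then $E_x$ is the matrix $E_{j0}(a)$. Likewise, for $\tau=a e_j^*$ with $a\in I$, $E^*_\tau$ is $E_{0j}(a)$. Hence $\ETrans(IN)$ contains every $E_{0j}(a)$ and $E_{j0}(a)$ with $a\in I$. For $i,j\ge 1$ with $i\ne j$, the commutator formula $[E_{i0}(a),E_{0j}(1)]=E_{ij}(a)$ lets us obtain $E_{ij}(a)$. Here $E_{0j}(1)$ lies only in $\ETrans(N)$, but the commutator $E_{i0}(a)\,\bigl(E_{0j}(1)E_{i0}(-a)E_{0j}(-1)\bigr)$ is a product of an element of $\ETrans(IN)$ and a conjugate of one, so it lies in $\ETrans(N,IN)$.

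The same computation with $a\in R$ shows that $\ETrans(N)=\En_{n+1}(R)$. Since $\En_{n+1}(R,I)$ is the normal closure of $\En_{n+1}(I)$ in $\En_{n+1}(R)$, and $\ETrans(N,IN)$ is normal in $\ETrans(N)=\En_{n+1}(R)$ and contains $\En_{n+1}(I)$, we get $\En_{n+1}(R,I)\subseteq \ETrans(N,IN)$.

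\textbf{Second inclusion.} Elementary relative transvections are transvections $1+\pi_q$ with $q\in IN$ or $\pi\in IN^*$, so $\ETrans(IN)\subseteq\Trans(N,IN)$. It remains to check that $\Trans(N,IN)$ is normalized by $\ETrans(N)$, and in fact by all of $\mathrm{Aut}(N)$. This follows from
\[
g(1+\pi_q)g^{-1}=1+(\pi\circ g^{-1})_{g q},
\]
which holds for any $g\in \mathrm{Aut}(N)$: we have $(\pi g^{-1})(gq)=\pi(q)=0$, $gq\in IN$ when $q\in IN$, and $\pi g^{-1}\in IN^*$ when $\pi\in IN^*$. Hence the normal closure of $\ETrans(IN)$ in $\ETrans(N)$ lies in $\Trans(N,IN)$.

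\textbf{Third inclusion (main obstacle).} We must show that each relative transvection $1+\pi_q$ lies in $\En_{n+1}(R,I)$. Without loss of generality $q\in IN$; the case $\pi\in IN^*$ follows by transposing, since $\En_{n+1}(R,I)$ is stable under transpose. Write $q=v$, a column vector with entries in $I$, and $\pi=w^{\top}$ with $w^{\top}v=0$. The map $1+\pi_q$ is then the matrix $I_{n+1}+vw^{\top}$, which is congruent to $I$ modulo $I$, so it lies in $\mathrm{SL}_{n+1}(R,I)$.

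For the case needed, the relevant condition is that $\pi$ is unimodular (when $q$ is unimodular and in $IN$, $I=R$ and we are in the absolute case). I would invoke Suslin's lemma that $I+vw^{\top}\in\En_{n+1}(R)$ when $w^{\top}v=0$ and $w$ is unimodular, together with its relative refinement: if $v$ has entries in $I$, the standard proof actually writes $I+vw^{\top}$ as a product of conjugates of $E_{ij}(x)$ with $x\in I$, so it lies in $\En_{n+1}(R,I)$. This requires $n+1\ge3$, which holds since $n\ge 2$.

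Concretely, choose $u$ with $w^{\top}u=1$ and use the identity $v=\sum_{i<j}a_{ij}(w_j e_i-w_i e_j)$ with $a_{ij}=v_iu_j-v_ju_i\in I$. Then $I+vw^{\top}=\prod_{i<j}\bigl(I+a_{ij}(w_je_i-w_ie_j)w^{\top}\bigr)$, because the vectors $(w_je_i-w_ie_j)$ are all orthogonal to $w$, so these factors multiply additively. Each factor is $I+a\,\beta w^{\top}$, where $\beta$ has only two nonzero coordinates. Such a factor can be conjugated by an element of $\mathrm{SL}$ that is elementary in the two coordinates $i,j$, using a third index (available since $n+1\ge3$), to reduce it to Suslin's explicit product of relative elementary matrices. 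This is the step where care is needed. Alternatively, it follows from \cite{ChatRao}-style arguments or directly from Lemma~\ref{En normality}: the factor equals the conjugate of $E_{ij}$-type elements with entries in $I$ by an element of $\mathrm{GL}$, which stays in $\En_{n+1}(R,I)$ by normality.
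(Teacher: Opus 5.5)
The paper does not prove this lemma; it only quotes it from Chattopadhyay--Rao. So your argument has to stand on its own.

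\textbf{What is correct.} Your first two inclusions are fine.
\begin{itemize}
\item The identifications $E_x=\prod_j E_{j0}(x_j)$ and $E^*_\tau=\prod_j E_{0j}(\tau_j)$, together with $[E_{i0}(a),E_{0j}(1)]=E_{ij}(a)$, give $\ETrans(N)=\En_{n+1}(R)$ and $\En_{n+1}(R,I)\subseteq\ETrans(N,IN)$.
\item The conjugation formula $g(1+\pi_q)g^{-1}=1+(\pi g^{-1})_{gq}$ gives the second inclusion. This is the same computation as Proof 1 of Theorem~\ref{RLT normality}; unimodularity is obviously preserved too.
\item In the third inclusion, the transpose reduction to $I_{n+1}+vw^\top$ (with $v\in I^{n+1}$, $w$ unimodular, $w^\top v=0$) is correct.
\item The splitting $I_{n+1}+vw^\top=\prod_{i<j}(I_{n+1}+a_{ij}\beta_{ij}w^\top)$, with $\beta_{ij}=w_je_i-w_ie_j$ and $a_{ij}\in I$, is also correct.
\end{itemize}

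\textbf{The gap.} You never prove the one step that carries the content of the lemma: that each factor $I_{n+1}+a\beta_{ij}w^\top$ lies in $\En_{n+1}(R,I)$.
\begin{itemize}
\item Saying it ``can be conjugated by an element of SL elementary in coordinates $i,j$ \dots\ to reduce to Suslin's explicit product'' is not an argument.
\item Your fallback via Lemma~\ref{En normality} assumes the factor is a $\mathrm{GL}$-conjugate of a relative elementary matrix. You do not justify this, and it fails in general. A conjugate $gE_{kl}(x)g^{-1}=I_{n+1}+x\,(ge_k)(e_l^\top g^{-1})$ has a unimodular column direction. But $\beta_{ij}$ need not be unimodular, for instance when $w_i,w_j$ generate a proper ideal.
\end{itemize}

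\textbf{How to fill it.} The step can be done directly.
\begin{itemize}
\item Pick $k\notin\{i,j\}$; this is where $n+1\ge 3$ is used.
\item Set $\gamma=w_ie_i+w_je_j$ and $\alpha=a\beta_{ij}$.
\item Since $\gamma^\top\beta_{ij}=0$ and $e_l^\top\beta_{ij}=0$ for $l\ne i,j$, all cross terms vanish, and
\[
I_{n+1}+a\beta_{ij}w^\top=\bigl(I_{n+1}+\alpha\gamma^\top\bigr)\prod_{l\ne i,j}E_{il}(aw_jw_l)\,E_{jl}(-aw_iw_l).
\]
The product on the right lies in $\En_{n+1}(I)$.
\item Next, $e_k^\top\alpha=0$, $\gamma^\top e_k=0$ and $\gamma^\top\alpha=0$. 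A direct expansion then gives
\[
I_{n+1}+\alpha\gamma^\top=\bigl[\,I_{n+1}+\alpha e_k^\top,\ I_{n+1}+e_k\gamma^\top\,\bigr],
\]
where
\[
I_{n+1}+\alpha e_k^\top=E_{ik}(aw_j)E_{jk}(-aw_i)\in\En_{n+1}(I),\qquad I_{n+1}+e_k\gamma^\top=E_{ki}(w_i)E_{kj}(w_j)\in\En_{n+1}(R).
\]
\item A commutator $xyx^{-1}y^{-1}=x\cdot(yx^{-1}y^{-1})$ with $x\in\En_{n+1}(I)$ and $y\in\En_{n+1}(R)$ lies in $\En_{n+1}(R,I)$.
\end{itemize}
This completes the third inclusion, and hence the cycle, without appealing to Suslin's normality theorem at all.
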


\begin{lemma}[\cite{ChatRao}, Lemma 4.9]
   Let $M$ be a finitely generated $R$-module, and $\alpha \in \Trans(M)$. Then there exists $\beta(X) \in \Trans(M[X])$ such that $\beta(1) = \alpha$, and $\beta(0) = Id$.

\end{lemma}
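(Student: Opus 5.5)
We build the homotopy generator by generator: for each generator we write down an explicit family $\beta(X)$ of transvections of $M[X]=M\otimes_R R[X]$, and then we use that the required property is multiplicative. Write $\alpha=\alpha_1\alpha_2\cdots\alpha_k$, where each $\alpha_i$ is a linear transvection $1+(\pi_i)_{q_i}$ of $M$. Here $\pi_i(q_i)=0$, and either $q_i\in \mathrm{Um}(M)$ or $\pi_i\in \mathrm{Um}(M^*)$. (Inverses need no separate treatment: $(1+\pi_q)^{-1}=1-\pi_q=1+(-\pi)_q$, and if $\pi$ is unimodular then so is $-\pi$, so an inverse of a transvection is again a transvection.) If we find $\beta_i(X)\in\Trans(M[X])$ with $\beta_i(1)=\alpha_i$ and $\beta_i(0)=Id$, then $\beta(X)=\beta_1(X)\cdots\beta_k(X)$ lies in $\Trans(M[X])$. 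Specialization $X\mapsto c$ is the map induced by the ring homomorphism $R[X]\to R$, and it is a group homomorphism $\mathrm{Aut}(M[X])\to\mathrm{Aut}(M)$. Hence $\beta(1)=\alpha$ and $\beta(0)=Id$.

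For a single transvection $1+\pi_q$ we take
\[
\beta(X)=1+(X\pi)_q ,
\]
where $\pi$ and $q$ are extended to $M[X]$ by $R[X]$-linearity. Here $(X\pi)_q(p)=X\,\pi(p)\,q$, so we equally well have $\beta(X)=1+\pi_{Xq}$. At $X=1$ this is $\alpha_i$, and at $X=0$ it is the identity. We still need $\beta(X)$ to be a genuine linear transvection of $M[X]$. First, $(X\pi)(q)=X\pi(q)=0$.

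The unimodularity condition is the key point, and we split into two cases.
\begin{itemize}
\item If $q\in\mathrm{Um}(M)$, choose $\theta\in M^*$ with $\theta(q)=1$. Its extension to $M[X]$ still sends $q\otimes 1$ to $1$, so $q$ is unimodular in $M[X]$. We then write $\beta(X)=1+(X\pi)_q$, which is a transvection with unimodular vector $q$ and arbitrary functional $X\pi$.
\item If $\pi\in\mathrm{Um}(M^*)$, pick $p\in M$ with $\pi(p)=1$. Then $\pi$ is unimodular as an element of $M[X]^*$, and we write $\beta(X)=1+\pi_{Xq}$, a transvection with unimodular functional $\pi$ and vector $Xq$, where $\pi(Xq)=0$.
\end{itemize}
The choice of where to place $X$ is what keeps the unimodular factor intact; putting $X$ on the unimodular factor would destroy unimodularity.

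The only other point to check is that the unimodularity hypothesis is the one the definition requires over $R[X]$. The definition only needs either $q$ or $\pi$ to be unimodular, which holds in both cases, so it suffices to know that $M^*\otimes R[X]\to M[X]^*$ carries $\pi$ to a functional with value $1$ somewhere. This is immediate from the description $\pi\mapsto \pi\otimes 1$, and it requires no projectivity.
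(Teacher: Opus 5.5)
Your argument is correct and matches the paper's approach: the paper only cites this lemma, but its proof of the relative version (Lemma~\ref{lin homotopy}) is exactly your construction, lifting each generator $1+\pi_q$ to $1+X\pi_q$ with $X$ attached to the non-unimodular factor so that the unimodular one survives over $R[X]$. One small correction: for a finitely generated but non-reflexive $M$, $\pi\in\mathrm{Um}(M^*)$ means $\theta(\pi)=1$ for some $\theta\in M^{**}$, not $\pi(p)=1$ for some $p\in M$, so in that case you should extend $\theta$ coefficientwise through $M[X]^*=M^*[X]$ (an identification that needs only finite generation of $M$) to see that $\pi$ stays unimodular over $R[X]$.
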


 \noindent We  establish a relative version of Lemma 3.7.

\begin{lemma}
\label{lin homotopy}
   Let $M$ be a finitely generated $R$-module, and $\alpha \in \Trans(M,IM)$. Then there exists $\beta(X) \in \Trans(M[X],IM[X])$ such that $\beta(1) = \alpha$, and $\beta(0) = Id$.

\end{lemma}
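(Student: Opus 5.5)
Since $\Trans(M,IM)$ is generated by relative transvections, it suffices to handle a single generator and then multiply. Write $\alpha=\prod_{k=1}^{r}(1+\pi^{(k)}_{q_k})$, where each factor is a relative transvection, i.e.\ $\pi^{(k)}(q_k)=0$ and either $q_k\in IM$ or $\pi^{(k)}\in IM^{*}$; inverses of generators are of the same form, since $(1+\pi_q)^{-1}=1-\pi_q=1+(-\pi)_q$. For each factor I will set
\[
\beta_k(X)=1+X\pi^{(k)}_{q_k},
\]
viewed as an endomorphism of $M[X]=M\otimes_R R[X]$. The extended functional is $\pi^{(k)}\otimes 1\in M[X]^{*}$ and the vector is $Xq_k\in M[X]$. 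Put differently, $\beta_k(X)=1+(X\pi^{(k)})_{q_k}$, and the $X$ can be placed on whichever side is convenient.

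Next I verify that each $\beta_k(X)$ is a relative transvection of $M[X]$ with respect to the ideal $I[X]$ of $R[X]$.
\begin{itemize}
\item \emph{Orthogonality.} The condition $\pi(q)=0$ persists: $(X\pi^{(k)})(q_k)=X\pi^{(k)}(q_k)=0$.
\item \emph{Case $q_k\in IM$.} Then $Xq_k\in IM[X]=I[X]M[X]$, so the vector lies in $I[X]M[X]$.
\item \emph{Case $\pi^{(k)}\in IM^{*}$.} Write $\pi^{(k)}=\sum_j a_j\phi_j$ with $a_j\in I$ and $\phi_j\in M^{*}$. Then $X\pi^{(k)}=\sum_j (Xa_j)(\phi_j\otimes1)\in I[X]M[X]^{*}$.
\item \emph{Invertibility.} $\beta_k(X)$ is automatically an automorphism, since $(1+\pi_q)(1-\pi_q)=1-\pi(q)\pi_q=1$.
\end{itemize}
Hence $\beta_k(X)\in\Trans(M[X],IM[X])$.

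Finally I set $\beta(X)=\prod_k\beta_k(X)$, which lies in $\Trans(M[X],IM[X])$. Specialising, $\beta_k(1)=1+\pi^{(k)}_{q_k}$ and $\beta_k(0)=1$. Since evaluation at $X=a$ is a ring homomorphism $R[X]\to R$ compatible with composition of endomorphisms, we get $\beta(1)=\alpha$ and $\beta(0)=Id$.

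The main obstacle is bookkeeping rather than computation. First, one must make precise that $IM[X]$ in the statement means $I[X]\cdot M[X]$. Second, one must check that $(IM^{*})\otimes R[X]\subseteq I[X]\,M[X]^{*}$. This holds via the natural map $M^{*}\otimes R[X]\to M[X]^{*}$, and no projectivity of $M$ is needed because we only push elements forward. Third, note that we never need unimodularity of $q$ or $\pi$, unlike in the absolute Lemma 3.7. There, multiplying $q$ by $X$ destroys unimodularity, so one puts $X$ on whichever of $q$ or $\pi$ is not required to be unimodular. In the relative case the defining conditions are ideal-membership conditions, which are preserved by multiplying by $X$, so this case split is unnecessary.
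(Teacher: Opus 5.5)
Your lift $1+X\pi_q$ is the same map as the paper's. As an endomorphism of $M[X]$ it equals both $1+\pi_{(Xq)}$ and $1+(X\pi)_q$, and the paper just picks one of these presentations according to which of $q,\pi$ is unimodular. The gap is in your check that this map lies in $\Trans(M[X],IM[X])$.

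You claim that in the relative case the defining conditions are only ideal-membership conditions, so unimodularity is never needed. That misreads the definition. A relative transvection is a linear transvection $1+\pi_q$ (so $\pi(q)=0$ \emph{and} $q\in\mathrm{Um}(M)$ or $\pi\in\mathrm{Um}(M^{*})$) that additionally satisfies $q\in IM$ or $\pi\in IM^{*}$. The paper's proof of this lemma and Proof 1 of Theorem \ref{RLT normality} use exactly this reading.

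Your checklist (orthogonality, ideal membership, invertibility) therefore leaves out the one condition that constrains where $X$ may go. The case split you call unnecessary is what secures it. Suppose $q$ is the unimodular member and you present the map as $1+\pi_{(Xq)}$. Then $Xq$ is not unimodular in $M[X]$, since for any $\Theta$ the value $\Theta(Xq)=X\Theta(q)$ has zero constant term. Also $\pi\in IM^{*}$ is not unimodular unless $I=R$. So that presentation exhibits no transvection at all.

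The repair is short, and your two cases already put $X$ in the right place. If $I\neq R$, the unimodular member of the pair cannot be the one lying in $IM$ or $IM^{*}$, because applying a witness $\theta$ would give $1\in I$. So in your case $q\in IM$ the functional $\pi$ is unimodular, and in your case $\pi\in IM^{*}$ the vector $q$ is unimodular. If $I=R$, simply choose the presentation this way.

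You then need to add that the untouched member stays unimodular after base change to $R[X]$:
\begin{itemize}
\item for the vector, $\theta\otimes 1$ witnesses that $q$ is unimodular in $M[X]$;
\item for the functional, use $M[X]^{*}\cong M^{*}[X]$ (valid since $M$ is finitely generated) to extend $\theta$ coefficientwise, which shows $\pi\otimes 1$ is unimodular in $M[X]^{*}$.
\end{itemize}
With this added, your proof coincides with the paper's.
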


\noindent \textit{Proof :} Note that $\alpha \in \Trans(M, IM)$ is a product of elements of the form $1 + \pi_q$, where $\pi \in M^{*}$, $q \in M$, $\pi(q) = 0$ with either $q\in$ Um$(M)$ or $\pi \in$ Um$(M^{*})$, and either $\pi \in IM^{*}$ (if $q\in$ Um$(M)$) or $q \in IM$ (if $\pi \in$ Um$(M^{*})$).
Let $\pi X$ denote $\pi$ times $X$, which belongs to $M^{*}[X]$, and let $q X$ denote $q$ times $X$, which belongs to $M[X]$. Define $\pi X(q) = \pi(q)X$ or $\pi(qX) = \pi(q)X$.

\vskip 2mm
\noindent We set $\beta(X)$ to be the product of elements of the form $1 + \pi_{(qX)}$ or $1+ (\pi X)_q$, whenever $Id + \pi_q$ appears in the expression of $\alpha$. The choice is made as follows: if $\pi$ is unimodular (hence $q \in IM$), we replace $q$ by $qX$ yielding $Id + \pi_{(qX)}$. If $q$ is unimodular (hence $\pi \in IM^*$), we replace $\pi$ by $\pi X$ yielding $Id + (\pi X)_q$. Each lifted generator lies in $\operatorname{Trans}(M[X], IM[X])$ because the relative condition is preserved: in the first case $qX \in IM[X]$; in the second case $\pi X \in IM^*[X]$. Hence, we get $\beta(X) \in \operatorname{Trans}(M[X], IM[X])$ such that $\beta(1) = \alpha$, and $\beta(0) = Id$.
$\hfill \square$

\begin{lemma}[\cite{ChatRao}, Proposition 4.10]
\label{rlt=relt}
  Let $I$ be an ideal of $R$. Let $M$ be a finitely generated projective $R$-module of rank at least $2$, and $N = (R \oplus M)$. Then $\operatorname{Trans}(N, IN) = \operatorname{ETrans}(N, IN)$.

\end{lemma}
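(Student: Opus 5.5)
The inclusion $\operatorname{ETrans}(N, IN) \subseteq \operatorname{Trans}(N, IN)$ is formal. The reverse inclusion I would get from a relative Quillen-type local-global principle, using Lemma \ref{lin homotopy} for the homotopy and Lemma \ref{relt=rlt=En} for the free local case.

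\textbf{The easy inclusion.} The generator $E_x$ with $x\in IM$ is $1+\pi_q$ with $\pi(t,y)=t$ and $q=(0,x)$. Here $\pi\in \mathrm{Um}(N^*)$ and $q\in IN$. The generator $E^*_\tau$ with $\tau\in IM^*$ is $1+\pi_q$ with $q=(1,0)\in\mathrm{Um}(N)$ and $\pi=\tau\circ\mathrm{pr}_M\in IN^*$. So $\operatorname{ETrans}(IN)\subseteq \operatorname{Trans}(N,IN)$. For $g\in\operatorname{Aut}(N)$ one has
\[ g(1+\pi_q)g^{-1}=1+(\pi\circ g^{-1})_{g(q)}, \]
and $g$ preserves unimodularity of $q$ (resp.\ of $\pi$) and the submodules $IN$, $IN^*$. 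Hence $\operatorname{Trans}(N,IN)$ is normal in $\operatorname{Aut}(N)$, and in particular it contains the normal closure $\operatorname{ETrans}(N,IN)$ of $\operatorname{ETrans}(IN)$ in $\operatorname{ETrans}(N)$.

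\textbf{The reverse inclusion: setup.} It suffices to show that every generator $\alpha=1+\pi_q$ of $\operatorname{Trans}(N,IN)$ lies in $\operatorname{ETrans}(N,IN)$. By Lemma \ref{lin homotopy}, choose $\beta(X)\in\operatorname{Trans}(N[X],IN[X])$ with $\beta(0)=Id$ and $\beta(1)=\alpha$.

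For each maximal ideal $\mathfrak m$ of $R$, the module $M_{\mathfrak m}$ is free of rank $n\ge 2$. Hence $N_{\mathfrak m}[X]$ is free of rank $n+1\ge 3$ over $R_{\mathfrak m}[X]$. By Lemma \ref{relt=rlt=En} applied over $R_{\mathfrak m}[X]$ with ideal $I_{\mathfrak m}[X]$, the localization $\beta(X)_{\mathfrak m}$ lies in
\[ \operatorname{Trans}(N_{\mathfrak m}[X],IN_{\mathfrak m}[X])=\operatorname{ETrans}(N_{\mathfrak m}[X],IN_{\mathfrak m}[X])=\En_{n+1}(R_{\mathfrak m}[X],I_{\mathfrak m}[X]). \]
Because $\beta$ is a finite product, there is some $s\notin\mathfrak m$ with $\beta(X)_s\in \operatorname{ETrans}(N_s[X],IN_s[X])$. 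I would also choose $s$ so that $M_s$ is free, which is possible since $M$ is projective.

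\textbf{The reverse inclusion: patching.} I would now run Quillen patching relatively.
\begin{enumerate}
\item First prove a \emph{relative dilation lemma}. If $\gamma(X)\in \operatorname{ETrans}(N_s[X],IN_s[X])$ with $\gamma(0)=Id$, then for $d\gg0$ and any $b\in (s^d)$, the element $\gamma(bX)$ is the image of an element of $\operatorname{ETrans}(N[X],IN[X])$ that is $Id$ at $X=0$.
\item To prove this, write $\gamma$ as a product of conjugates $\eta\, E\, \eta^{-1}$, where $E$ is a relative elementary generator and $\eta\in\operatorname{ETrans}(N_s[X])$.
\item Use the commutator identities between the $E_x$ and the $E^*_\tau$, as in the free case of Suslin and of Bak--Basu--Rao. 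These rewrite $\eta(bX)E(bX)\eta(bX)^{-1}$ as a product of elementary generators whose module parameters have denominators cleared by powers of $s$. The parameters stay in $I$ whenever the original parameter of $E$ was in $I$.
\item Next, pick finitely many comaximal $s_1,\dots,s_k$ as above and write $1=\sum b_i s_i^{d}$.
\item Express $\beta(X)$ as a telescoping product of terms $\beta(c_{i}X)\beta(c_{i+1}X)^{-1}$, where $c_i=b_i s_i^d+\dots+b_k s_k^d$ and $c_{k+1}=0$.
\item Apply the dilation lemma to $\gamma_i(X)=\beta((c_{i+1}+X)X')\beta(c_{i+1}X')^{-1}$ over $R_{s_i}[X']$. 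Since $c_i=c_{i+1}+b_is_i^d$, substituting $X=b_is_i^d$, $X'=X$ gives the $i$-th telescoping term.
\item Since $\beta(X)\in\operatorname{Aut}(N[X])$, the term $\gamma_i$ has value $Id$ at $X=0$ and is relative elementary over $R_{s_i}$, so each telescoping term lies in $\operatorname{ETrans}(N[X],IN[X])$.
\item Setting $X=1$ gives $\alpha\in\operatorname{ETrans}(N,IN)$.
\end{enumerate}

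\textbf{Main obstacle.} The hard step is the relative dilation lemma. Localization $N\to N_s$ need not be injective, and one must control how conjugation by absolute generators $E_y$, $E^*_\sigma$ acts on relative ones. I would first establish the commutator formulas, for instance
\[ [E_x,E^*_\tau]\ \text{expressed via } \tau(x) \text{ and transvections } E_{\tau(\cdot)x}, \]
so that conjugates reduce to products of relative generators with parameters in $s^{d}I$. I would then use a finite presentation $R^m\to M$ to pass from $M_s$ back to $M$. This follows the absolute argument of \cite{BakBasuRao}, with the observation that each step keeps one factor of $I$.
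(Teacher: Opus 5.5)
Your overall reduction is sound, and it uses exactly the ingredients the paper places around this result. The paper only cites the proposition, but its Proof 2 of Theorem \ref{RLT normality} follows the same pattern.

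\begin{itemize}
\item The easy inclusion, via normality of $\operatorname{Trans}(N,IN)$ in $\operatorname{Aut}(N)$, is correct.
\item After Lemma \ref{lin homotopy}, local freeness of $M_{\mathfrak m}$ and Lemma \ref{relt=rlt=En}, you have $\beta(X)\in\operatorname{Aut}(N[X])$ with $\beta(0)=Id$ and $\beta(X)_{\mathfrak m}\in\operatorname{ETrans}(N_{\mathfrak m}[X],IN_{\mathfrak m}[X])$ for every maximal $\mathfrak m$.
\item These are precisely the hypotheses of Lemma \ref{lin LG}. It gives $\beta(X)\in\operatorname{ETrans}(N[X],IN[X])$.
\item Specializing $X\mapsto 1$ sends relative (resp.\ absolute) elementary generators to relative (resp.\ absolute) ones, so $\alpha\in\operatorname{ETrans}(N,IN)$.
\end{itemize}
No neighbourhood $s$ and no patching are needed.

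The patching you substitute for Lemma \ref{lin LG} has a genuine gap at step 7. Your dilation lemma only produces some $\delta_i\in\operatorname{ETrans}(N[X],IN[X])$ whose image in $\operatorname{Aut}(N_{s_i}[X])$ agrees with that of $T_i=\beta(c_iX)\beta(c_{i+1}X)^{-1}$. The map $N[X]\to N_{s_i}[X]$ need not be injective, for instance when $s_i$ is a zero divisor. So this does not give $T_i=\delta_i$, and $\prod_i\delta_i$ need not equal $\beta(X)$. Non-injectivity bites here, not inside the dilation lemma.

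The standard repair is one more dilation. $T_i$ and its lift both equal $Id$ at $X=0$, and their difference in $\operatorname{End}(N)[X]$ has $s_i$-torsion coefficients. Hence the difference vanishes after $X\mapsto s_i^kX$. To keep the exponent independent of $c_{i+1}$, which itself depends on $d$, dilate $\beta(Z+X)\beta(Z)^{-1}$ with a fresh variable $Z$, and set $Z=c_{i+1}X'$ only at the end.

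Your dilation lemma is also too quick. Only $\gamma(0)=Id$ is assumed, so the individual factors $\eta(bX)$ and $E(bX)$ keep constant terms with denominators that no dilation clears. One must first rewrite $\gamma(X)\gamma(0)^{-1}$ as a product of conjugates, by constant elements over $R_s$, of generators with $X$-divisible parameters, while staying inside the normal closure of the relative generators. With these repairs you would simply be reproving Lemma \ref{lin LG}, so citing it is the cleaner route.
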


\noindent In the following lemma, we state a local-global principle for the relative elementary transvection group.
\begin{lemma}[\cite{ChatRao}, Lemma 4.7]
\label{lin LG}   
Let $I$ be an ideal of $R$, and let $M$ be a finitely generated projective $R$-module of rank $n \geq 2$. Let $N = (R \oplus M)$. Let $\alpha(X) \in \operatorname{Aut}(N[X])$, with $\alpha(0) = Id$. If for each maximal ideal $\mathfrak{m}$ of $R$, $\alpha(X)_{\mathfrak{m}} \in \operatorname{ETrans}(N_{\mathfrak{m}}[X], IN_{\mathfrak{m}}[X])$, then $\alpha(X) \in \operatorname{ETrans}(N[X], IN[X])$.

\end{lemma}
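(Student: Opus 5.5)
The plan is to follow the Quillen--Suslin patching scheme. I would reduce to the free case through local freeness, and use a relative dilation property of $\En_{n+1}(R,I)$ to glue the local data. Concretely, define
\[
J=\{\, s\in R \;:\; \alpha(X)_s \in \ETrans(N_s[X], IN_s[X]) \,\}.
\]
The aim is to show that $J$ is an ideal which is contained in no maximal ideal, so that $1\in J$. Here $\alpha(X)_s$ is the image of $\alpha(X)$ after inverting $s$. It is easy to see that $J$ is closed under multiplication by elements of $R$.

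\textbf{Step 1 ($J\not\subseteq\mathfrak m$).} Fix a maximal ideal $\mathfrak m$. The hypothesis expresses $\alpha(X)_{\mathfrak m}$ as a finite product of conjugates of relative elementary transvections $E_x$, $E^*_\tau$ with $x\in IM_{\mathfrak m}[X]$ and $\tau\in IM^*_{\mathfrak m}[X]$. Only finitely many denominators occur. Since $M$ is finitely generated projective, it is locally free. Hence there is $s\notin\mathfrak m$ such that $M_s$ is free of rank $n$ and all these data already live over $R_s[X]$. The two automorphisms then agree after localizing at $\mathfrak m$, and in fact already after inverting some further $s'\notin\mathfrak m$, because $\mathrm{End}(N[X])$ is finitely presented. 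Replacing $s$ by $ss'$ gives $s\in J\setminus\mathfrak m$.

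\textbf{Step 2 (dilation).} This is the key lemma and the expected main obstacle. Suppose $s\in J$ is chosen with $M_s$ free. By Lemma \ref{relt=rlt=En}, after choosing a basis we have $\ETrans(N_s[X],IN_s[X])=\En_{n+1}(R_s[X],I_s[X])$, and $n+1\ge 3$. Set $\gamma(X,Y)=\alpha(X+Y)\alpha(Y)^{-1}$; then $\gamma(0,Y)=Id$. Working over $R[Y]$ in place of $R$, I would prove the following dilation statement. If $\delta(X)\in\En_{n+1}(R_s[X],I_s[X])$ with $\delta(0)=Id$, then for $l\gg0$ the automorphism $\delta(s^lX)$ is the localization of an element of $\ETrans(N[X],IN[X])$ that equals $Id$ at $X=0$.

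The first thing I would try is the standard commutator computation. Write $\delta(X)$ as a product of generators $E_{kl}(a)E_{ij}(xX\cdot\ast)E_{kl}(-a)$, whose constant terms cancel. Then use the identities $[E_{ij}(a),E_{jk}(b)]=E_{ik}(ab)$ to rewrite each conjugate as a product of elementary matrices with entries in $s^{m}I[X]X$ and no conjugators left. This is where $n+1\ge3$ is used, exactly as in Suslin's proof of Lemma \ref{En normality}. These elementary matrices then lift, because $E_{s^m x}$ over the basis of $M_s$ comes from $E_{x'}$ with $x'\in IM$. The reason is that a basis of $M_s$ involves only finitely many denominators.

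One subtlety is $s$-torsion, since the localization map need not be injective. I would handle it by working with the lifted element directly: define it over $R$ and compare only after localization, as in the usual Quillen patching argument, enlarging $l$ if necessary.

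\textbf{Step 3 (gluing).} The Step 1 elements generate the unit ideal, so choose $s_1,\dots,s_k\in J$, each with $M_{s_i}$ free, and write $1=\sum b_i s_i^{l}$ with $l$ large enough for Step 2. Then write
\[
\alpha(X)=\prod_i \gamma\Big(b_is_i^lX,\;\big(\textstyle\sum_{j>i}b_js_j^l\big)X\Big),
\]
a telescoping product. Each factor comes from $\gamma_{s_i}$ dilated by $s_i^l$, and by Step 2 applied over $R[X]$ it lies in $\ETrans(N[X],IN[X])$. Hence $\alpha(X)\in\ETrans(N[X],IN[X])$, as required.
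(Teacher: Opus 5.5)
The paper does not prove this lemma; it quotes it from Chattopadhyay--Rao. So the comparison is with the standard dilation-plus-patching argument, and your Steps 1 and 3 reproduce that argument correctly. The telescoping product for $\gamma(X,Y)=\alpha(X+Y)\alpha(Y)^{-1}$ is right. The $s$-torsion issue is indeed removed by a further substitution $X\mapsto s^kX$: the lift and $\gamma(s^lX,Y)$ both equal $1$ at $X=0$, so their difference has $s$-torsion coefficients and no constant term. You never actually need $J$ to be an ideal.

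\textbf{The gap is the mechanism you propose for Step 2.} You cannot rewrite the conjugates as products of elementary matrices with entries in $s^mI[X]X$ with no conjugators left. Such products generate only the true relative group, which is in general strictly smaller than the relative group and not normal, and dilation does not help. Concretely, take $R=\mathbb{Z}$, $I=p\mathbb{Z}$, $M=R^2$, $p\nmid s$, and
\[
\delta(X)=E_{21}(1)E_{12}(pX)E_{21}(-1)=I_3+pX(e_{12}+e_{22}-e_{11}-e_{21})\in\En_3(R_s[X],I_s[X]),
\]
so that $\delta(0)=I_3$. Every product of elementary matrices with entries in $pR_s[X]$ is congruent modulo $p^2$ to $I_3$ plus a matrix with zero diagonal. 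By contrast, $\delta(s^lX)$ has $(1,1)$-entry $1-ps^lX$ for every $l$. So the commutator calculus cannot end in elementary matrices alone.

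\textbf{How to repair it.} Keep commutators, and use that $\ETrans(N[X],IN[X])$ is a normal closure.
\begin{itemize}
\item For a conjugator $E_{ji}(a/s^r)$ and $y\in I$, write $E_{ij}(s^{2t}Xy)=[E_{ik}(s^tX),E_{kj}(s^ty)]$ with $k\neq i,j$; this is where $n+1\ge 3$ enters.
\item Conjugating each factor gives $[u,v]$, with $u=E_{ik}(s^tX)E_{jk}(as^{t-r}X)$ and $v=E_{kj}(s^ty)E_{ki}(-as^{t-r}y)$.
\item For $t\gg 0$, $u$ lifts to some $\tilde u\in\ETrans(N[X])$ with $\tilde u(0)=1$, and $v$ lifts to some $\tilde v\in\ETrans(N,IN)$.
\item Hence $[\tilde u,\tilde v]=(\tilde u\tilde v\tilde u^{-1})\tilde v^{-1}$ lies in $\ETrans(N[X],IN[X])$ and is $1$ at $X=0$.
\item Then induct on the length of the conjugator, always retaining such commutators rather than eliminating them.
\end{itemize}

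Two smaller points also need fixing:
\begin{itemize}
\item Elementary matrices $E_{ij}$ with $i,j\ge 2$ (inside the $M_s$-block) are not of the form $E_x$ or $E^*_\tau$. Before lifting, rewrite them as commutators $[E_{i1}(\cdot),E_{1j}(\cdot)]$.
\item The claim that $\delta(X)$ with $\delta(0)=1$ is a product of conjugates of elementary matrices whose ideal entries are divisible by $X$ does not follow from constant terms cancelling. You need to split each conjugator as $g=g(0)\cdot g(0)^{-1}g$, pass to $\delta(X)\delta(0)^{-1}$, and control the resulting commutators $[g(0)^{-1}g,E_{ij}(x_0)]$, for example by a relative commutator formula.
\end{itemize}

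Alternatively, you can avoid all of this relative bookkeeping by passing to the excision ring $R\oplus I$ and invoking the absolute local--global principle of Bak--Basu--Rao.
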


\noindent Now we prove the main result of this section, first by direct calculations. Then, we give a second proof of the result using local-global principle.

\begin{theorem}
\label{RLT normality}
     Let $I$ be an ideal of $R$. Let $M$ be a finitely generated projective $R$-module of rank at least $2$, and $N = (R \oplus M)$. Then $\operatorname{ETrans}(N, IN)$ is normal in $\operatorname{Aut}(N)$.
\end{theorem}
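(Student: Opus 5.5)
The quickest route goes through Lemma \ref{rlt=relt}, which identifies $\ETrans(N,IN)$ with $\Trans(N,IN)$. So it suffices to show that $\Trans(N,IN)$ is normal in $\mathrm{Aut}(N)$, and this is a direct computation on generators. Take $\gamma\in\mathrm{Aut}(N)$ and a relative transvection $1+\pi_q$. Here $\pi(q)=0$, and either $q\in\mathrm{Um}(N)$ with $\pi\in IN^{*}$, or $\pi\in\mathrm{Um}(N^{*})$ with $q\in IN$. For $p\in N$ we compute
\[
\gamma(1+\pi_q)\gamma^{-1}(p)=p+\pi(\gamma^{-1}p)\,\gamma(q),
\]
so that $\gamma(1+\pi_q)\gamma^{-1}=1+(\pi\circ\gamma^{-1})_{\gamma(q)}$.

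We then check that the new pair $(\pi\circ\gamma^{-1},\gamma(q))$ is again of relative type.
\begin{itemize}
\item The orthogonality condition holds: $(\pi\circ\gamma^{-1})(\gamma q)=\pi(q)=0$.
\item Unimodularity is preserved. If $\theta(q)=1$, then $(\theta\circ\gamma^{-1})(\gamma q)=1$, so $\gamma q\in\mathrm{Um}(N)$. If $\pi(p)=1$, then $(\pi\circ\gamma^{-1})(\gamma p)=1$, so $\pi\circ\gamma^{-1}\in\mathrm{Um}(N^{*})$.
\item The relative conditions are preserved. Since $\gamma$ is $R$-linear, $\gamma(IN)\subseteq IN$. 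Writing $\pi=\sum x_i\pi_i$ with $x_i\in I$ gives $\pi\circ\gamma^{-1}=\sum x_i(\pi_i\circ\gamma^{-1})\in IN^{*}$.
\end{itemize}
Hence each conjugate of a generator of $\Trans(N,IN)$ is again a generator, so $\Trans(N,IN)$ is normal. By Lemma \ref{rlt=relt} the same holds for $\ETrans(N,IN)$.

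The only delicate point is the convention in the definition of $\Trans(M,IM)$. Read literally, the definition asks only for "$q\in IM$ or $\pi\in IM^{*}$", without restating that the transvection is a genuine transvection, i.e.\ that $q$ or $\pi$ is unimodular. The proof of Lemma \ref{lin homotopy} makes clear that both conditions are intended, and I adopt that reading. Under either reading the computation above goes through, since $\gamma$ preserves all the relevant conditions.

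As a second, local-global proof, take $\alpha\in\ETrans(N,IN)$ and $\gamma\in\mathrm{Aut}(N)$. By Lemma \ref{rlt=relt} and Lemma \ref{lin homotopy}, choose $\beta(X)\in\ETrans(N[X],IN[X])$ with $\beta(1)=\alpha$ and $\beta(0)=Id$, and set $\delta(X)=\gamma\beta(X)\gamma^{-1}$, so that $\delta(0)=Id$. For each maximal ideal $\mathfrak m$, the localization $N_{\mathfrak m}$ is free of rank $\ge 3$. By Lemma \ref{relt=rlt=En}, $\beta(X)_{\mathfrak m}\in \En_{n+1}(R_{\mathfrak m}[X],I_{\mathfrak m}[X])$. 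Suslin's Lemma \ref{En normality} then shows that $\delta(X)_{\mathfrak m}$ remains in this group, which equals $\ETrans(N_{\mathfrak m}[X],IN_{\mathfrak m}[X])$. Lemma \ref{lin LG} now gives $\delta(X)\in\ETrans(N[X],IN[X])$, and setting $X=1$ yields $\gamma\alpha\gamma^{-1}\in\ETrans(N,IN)$. The main subtlety here is identifying the localized objects compatibly, i.e.\ checking that the relative ideal $I$ extends to $I_{\mathfrak m}[X]$. This is routine.
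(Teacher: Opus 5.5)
Your proposal is correct and follows both of the paper's proofs essentially step for step. The first is the reduction to $\Trans(N,IN)$ via Lemma~\ref{rlt=relt}, followed by the computation $\gamma(1+\pi_q)\gamma^{-1}=1+(\pi\circ\gamma^{-1})_{\gamma(q)}$; the second is the homotopy--localization argument combining Lemma~\ref{lin homotopy}, Lemma~\ref{relt=rlt=En}, Suslin's normality (Lemma~\ref{En normality}) and Lemma~\ref{lin LG}, where your explicit checks that unimodularity and membership in $IN^{*}$ survive composition with $\gamma^{-1}$ spell out what the paper asserts without justification.
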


\noindent \textit{Proof 1:} 
Note that $\operatorname{ETrans}(N,IN)=\operatorname{Trans}(N,IN)$ by Lemma 3.9. So, it suffices to prove that $\operatorname{Trans}(N,IN)$ is normal in $\operatorname{Aut}(N)$. 

\vskip 3mm
\noindent Let $\sigma\in \mathrm{Trans}(N,IN)$ be a relative transvection. Then $\sigma=1+\pi_{q}$ for some $q\in N$, $\pi \in N^{*}$, $\pi(q)=0$, and either $q\in$ Um$(N)$, $\pi \in IN^{*}$ or $q\in IN$, $\pi \in$ Um$(N^{*})$. For any $p\in N$, the map $\pi_{q}:N\to N$ is defined as $\pi_{q}(p)=\pi(p)q$.

\vskip 3mm
\noindent Let $\gamma\in\operatorname{Aut}(N)$. We will show that $\gamma\sigma\gamma^{-1} \in \operatorname{Trans}(N,IN)$.
\vskip 3mm
\noindent Let $p\in N$
\begin{eqnarray*}
	\gamma\sigma\gamma^{-1}(p)
	&=&\gamma(1+\pi_{q})\gamma^{-1}(p)\\
	&=&\gamma(\gamma^{-1}(p)+\pi_{q}(\gamma^{-1}(p)))\\
    &=&\gamma(\gamma^{-1}(p)+\pi(\gamma^{-1}(p))q)\\
    &=&p+\gamma(\pi(\gamma^{-1}(p))q)\\
	&=&p+\pi(\gamma^{-1}(p))\gamma(q)\hfill \text{ [as $\pi(\gamma^{-1}(p))\in R$, and $\gamma$ is $R$-linear]}\\
    &=&p+\mu(p)\gamma(q)\hfill \text{ [$\mu=(\pi\circ\gamma^{-1})$, and $\mu\in N^*$]}\\
	&=&p+\mu_{\gamma(q)}(p)\\
    &=&(Id_{N} +\mu_{\gamma(q)})(p)\\
    \implies \gamma\sigma\gamma^{-1}&=&(1 +\mu_{\gamma(q)})
\end{eqnarray*}

\noindent Note that $q\in IN$, $\pi \in$ Um$(N^{*})\Rightarrow\gamma(q)\in IN,$ $\mu \in$ Um$(N^{*}), $, and $q\in$ Um$(N)$, $\pi\in IN^* \Rightarrow \gamma(q)\in$ Um$(N)$, $\mu\in IN^*$. Also, as $\pi(q)=0$, we have 
\[\mu(\gamma(q))=\pi\circ\gamma^{-1}(\gamma(q))=\pi(q)=0.\]
Therefore, $\gamma\sigma\gamma^{-1}$ is a relative linear transvection, and hence is in $\mathrm{Trans}(N,IN)$. As $\operatorname{Trans}(N,IN)$ is generated by the relative transvections, it follows that it is a normal subgroup of $\operatorname{Aut}(N)$.  $\hfill \square$

\vskip 4mm
\noindent \textit{Proof 2:} 
Let $\delta \in \operatorname{ETrans}(N, IN)$. Note that $\operatorname{ETrans}(N, IN)=\operatorname{Trans}(N, IN)$ by Lemma \ref{rlt=relt}, and hence $\delta \in \operatorname{Trans}(N, IN)$. By Lemma \ref{lin homotopy}, there exists $\delta(X) \in \operatorname{Trans}(N[X], IN[X])$ such that $\delta(1)=\delta$, and $\delta(0)=Id$. For a $\gamma \in$ Aut$(N)$, we define $\beta(X)=\gamma\delta(X)\gamma^{-1}$. 

\vskip 2mm
\noindent Note that for every maximal ideal $\mathfrak{m}$ of $R$, over the local ring $R_\mathfrak{m}$, the projective module $N_\mathfrak{m}$ is free of $\text{rank} \geq 3$. Therefore, over $R_\mathfrak{m}$ we have Aut$(N_\mathfrak{m})=$GL$_{n+1}(R_\mathfrak{m})$, and by Lemma \ref{relt=rlt=En}, $\operatorname{ETrans}(N_{\mathfrak{m}}[X], IN_{\mathfrak{m}}[X])=\operatorname{E}_{n+1}(R_{\mathfrak{m}}[X],I_{\mathfrak{m}}[X])$.

\vskip 2mm
\noindent Note that by Lemma \ref{En normality}, for all maximal ideals $\mathfrak{m}$ of $R$, we have \[\beta(X)_{\mathfrak{m}}=\gamma_{\mathfrak{m}}\delta(X)_{\mathfrak{m}}\gamma^{-1}_{\mathfrak{m}}\in\operatorname{E}_{n+1}(R_{\mathfrak{m}}[X],I_{\mathfrak{m}}[X])=\operatorname{ETrans}(N_{\mathfrak{m}}[X], IN_{\mathfrak{m}}[X]).\] Hence, by Lemma \ref{lin LG}, we have $\beta(X)\in \operatorname{ETrans}(N[X], IN[X])$, and hence $\beta(1)=\gamma\delta\gamma^{-1}\in \operatorname{ETrans}(N, IN)$. Therefore, $\operatorname{ETrans}(N, IN)$ is a normal subgroup of Aut$(N)$. $\hfill \square$

\section{Symplectic Transvections}
In \cite{BakBasuRao}, Bak, Basu, and Rao proved that $\operatorname{ETrans}_{\operatorname{Sp}}(Q, \langle,\rangle)$ is a normal subgroup of Sp$(Q)$. In this section, we extend this result to a relative case with respect to an ideal $I$ of $R$ (see Theorem \ref{RST normality}). We show that the relative symplectic transvection group $\operatorname{ETrans}_{\operatorname{Sp}}(Q, IQ, \langle , \rangle)$ is normal in Sp$(Q)$. We give two proofs of this result, one through direct computation, and one using a local-global principle for the relative elementary transvection group.

\begin{definition} \rm{ A \textit{symplectic $R$-module} is a pair $(P, \langle , \rangle)$, where $P$ is a finitely generated projective $R$-module of even rank, and $\langle , \rangle : P \times P \to R$ is a non-degenerate (i.e., $P \cong P^*$ by $x \to \langle x, -\rangle$) alternating bilinear form. }\end{definition}

\vskip 2mm
\begin{definition} \rm{An \textit{isometry} of a symplectic module $(P, \langle , \rangle)$ is an automorphism of $P$ which fixes the bilinear form. The group of isometries of $(P, \langle , \rangle)$ is denoted by $\operatorname{Sp}(P)$. }\end{definition}

\vskip 2mm
\begin{definition} \rm{ A \textit{symplectic transvection} of a symplectic module $(P, \langle , \rangle)$ (as defined by Bass in \cite{Bass3}) is an automorphism of the form
\[
\sigma(p) = p + \langle u, p \rangle v + \langle v, p \rangle u + \alpha \langle u, p \rangle u,
\]
where $\alpha \in R$, $u, v \in P$ are fixed elements with $\langle u, v \rangle = 0$, and either $u$ or $v$ is unimodular. It is easy to check that $\langle \sigma(p), \sigma(q) \rangle = \langle p, q \rangle$, and $\sigma$ has an inverse $\tau(p) = p - \langle u, p \rangle v - \langle v, p \rangle u - \alpha \langle u, p \rangle u$.

\vskip 2mm
\noindent The subgroup of $\operatorname{Sp}(P)$ generated by the symplectic transvections is called \textit{symplectic transvection group}, and is denoted by  $\operatorname{Trans_{Sp}}(P, \langle , \rangle)$ (see \cite{Swan}, Page 35).

\vskip 2mm
\noindent The group of \textit{relative symplectic transvections} with respect to an ideal $I$ is generated by the symplectic transvections of the form 
\[
\sigma(p) = p + \langle u, p \rangle v + \langle v, p \rangle u + \alpha \langle u, p \rangle u,
\]
where $\alpha \in I$, $u \in P$, $v \in IP$ are fixed elements with $\langle u, v \rangle = 0$. The group generated by relative symplectic transvections is denoted by $\operatorname{Trans_{Sp}}(P, IP, \langle , \rangle)$.
 }\end{definition}
\vskip 2mm 

\noindent The following remark gives a symplectic $R[X]$-module structure on $P[X]$, for a symplectic $R$-module $(P,\langle,\rangle)$. For details, see section 4 of \cite{AC3}.
\begin{remark}
\label{P[X] is symplectic}
    If $(P,\langle,\rangle)$ is a symplectic $R$-module, then $P[X]$ is a finitely generated $R[X]$-module of even rank. The map $\langle,\rangle_{\otimes R[X]}: P[X]\times P[X]\rightarrow R[X]$ given by
    \[\bigg\langle\sum_{i}p_iX^i,\sum_{j}q_jX^j\bigg\rangle_{\otimes R[X]}=\sum_{i,j}\langle p_i,q_j\rangle X^{i+j}\]
    is a non-degenerate alternating bilinear form. The pair $(P[X],\langle,\rangle_{\otimes R[X]})$ is a symplectic $R[X]$- module.
\end{remark}

\vskip 2mm
\begin{definition} \rm{ Let $(P_1, \langle, \rangle_1)$, and $(P_2, \langle, \rangle_2)$ be two symplectic $R$-modules. Their \textit{orthogonal sum} is a pair $(P, \langle, \rangle)$, where $P = P_1 \oplus P_2$, and the bilinear form is defined by \[
\langle(p_1, p_2), (q_1, q_2)\rangle = \langle p_1, q_1\rangle_1 + \langle p_2, q_2\rangle_2.\] }\end{definition}

\begin{remark}
There is a non-degenerate alternating bilinear form $\langle , \rangle$ on the $R$-module $\mathbb{H}(R) = R \oplus R^*$, namely \[\langle (a_1, f_1), (a_2, f_2) \rangle = f_2(a_1) - f_1(a_2).\]
\end{remark}

\noindent \textbf{Notation.} Now onwards \( Q \) will denote \( (R^2 \oplus P) \) with induced form on \( (\mathbb{H}(R) \oplus P) \), and \( Q[X] \) will denote \( (R[X]^2 \oplus P[X]) \) with induced form on \( (\mathbb{H}(R[X]) \oplus P[X]) \).

\vskip 2mm
\begin{definition} \rm{ The automorphisms of $Q$ of the form
\[
(a, b, p)^{\top} \mapsto (a, b - \langle p, q \rangle - \alpha a, p - aq)^{\top},
\]
or of the form
\[
(a, b, p)^{\top} \mapsto (a + \langle p, q \rangle + \beta b, b, p - bq)^{\top},
\]
where $\alpha, \beta \in R$, and $q \in P$, are called \textit{elementary symplectic transvections}. We denote the first isometry by $\rho(q, \alpha)$, and the second one by $\mu(q, \beta)$. It can be checked that the elementary symplectic transvections are symplectic transvections on $Q$. Consider $(u, v) = ((0, 1, 0), (0, 0, q))$ to get $\rho(q, \alpha)$, and consider $(u, v) = ((-1, 0, 0), (0, 0, q))$ to get $\mu(q, \beta)$.

\vskip 2mm
\noindent The subgroup of $\operatorname{Trans_{Sp}}(Q, \langle , \rangle)$ generated by the elementary symplectic transvections is denoted by $\operatorname{ETrans_{Sp}}(Q, \langle , \rangle)$. }\end{definition}

\vskip 2mm
\noindent The elementary symplectic transvections of $Q$ of the form $\rho(q, \alpha)$, $\mu(q, \beta)$, where $q \in IP$, and $\alpha, \beta \in I$ are called \textit{relative elementary symplectic transvections} with respect to an ideal $I$.

\vskip 2mm
\noindent The subgroup of $\operatorname{ETrans}_{\operatorname{Sp}}(Q, \langle , \rangle)$ generated by relative elementary symplectic transvections is denoted by $\operatorname{ETrans}_{\operatorname{Sp}}(IQ, \langle , \rangle)$. The normal closure of $\operatorname{ETrans}_{\operatorname{Sp}}(IQ, \langle , \rangle)$ in $\operatorname{ETrans}_{\operatorname{Sp}}(Q, \langle , \rangle)$ is denoted by $\operatorname{ETrans}_{\operatorname{Sp}}(Q, IQ, \langle , \rangle)$.

\vskip 2mm
\noindent The following result, proved by Kopeiko, is a particular case of Theorem \ref{RST normality}, in the case of a free module.

\begin{lemma}[\cite{Kop}, Corollary 1.11]
\label{Kop normality}
    $\mathrm{ESp}_{2n}(R,I)$ is normal in $\mathrm{Sp}_{2n}(R)$, for $n \geq 2$.
\end{lemma}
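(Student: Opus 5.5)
This is Kopeiko's theorem (\cite{Kop}, Corollary 1.11), which the paper cites rather than reproves. If I had to prove it, I would follow the Suslin-style approach that Kopeiko adapted. The aim is to show that for $g \in \mathrm{Sp}_{2n}(R)$ and a generator $se_{ij}(x)$ with $x \in I$, the conjugate $g\,se_{ij}(x)\,g^{-1}$ lies in $\mathrm{ESp}_{2n}(R,I)$. Normality then follows, because conjugating a generator $se_{kl}(a)se_{ij}(x)se_{kl}(-a)$ only replaces $g$ by $g\,se_{kl}(a)$.

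\emph{Step 1: write the conjugate as a rank-one perturbation.} Any elementary symplectic matrix is a symplectic transvection of $R^{2n}$. Concretely, $se_{ij}(x)$ has the form
\[ p \mapsto p + \langle u,p\rangle v + \langle v,p\rangle u + \alpha\langle u,p\rangle u, \]
with $u$ a standard basis vector, and either $v$ having entries in $I$ or $\alpha \in I$. Conjugating by $g$ replaces $(u,v)$ by $(gu, gv)$, as in Proof 1 of Theorem \ref{RLT normality}. So it suffices to prove the following.

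\emph{Claim.} For every unimodular $w = gu$ (a column of a symplectic matrix), and every $v$ with entries in $I$ and $\langle w,v\rangle = 0$ (or with $\alpha \in I$), the transvection $T(w,v,\alpha)$ lies in $\mathrm{ESp}_{2n}(R,I)$.

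\emph{Step 2: decompose the transvection.} This is the symplectic analogue of Suslin's lemma: if $v^{\top} w = 0$, then $I + v w^{\top}$ lies in $\mathrm{E}_n$, and it lies in $\mathrm{E}_n(R,I)$ when $v$ has entries in $I$. I would first treat $w$ lying in a "small" position, for instance with a zero coordinate pair. There one can write $T(w,v,\alpha)$ explicitly as a product of relative generators conjugated by absolute ones, using the Kopeiko–Suslin commutator formulas $[se_{ij}(a), se_{jk}(x)] = se_{ik}(\pm ax)\cdots$. In general, I would write $v$ as a combination of the vectors $e_k\langle w, e_{\sigma(k)}\rangle$-type terms orthogonal to $w$. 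This uses that $\langle w,v\rangle = 0$ and that $w$ is unimodular, so that $v = \sum_{k<l} c_{kl}(w_k e_l - w_l e_k)$-type expressions hold with $c_{kl} \in I$. Each summand then gives a transvection computable as a product of relative elementary conjugates, and this requires $n \ge 2$ so that there is room to maneuver.

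\emph{Step 3: remove the dependence on the unimodular vector.} For the case where $\alpha \in I$ and $v = 0$, the transvection $p \mapsto p + \alpha\langle w,p\rangle w$ must be handled separately. I would reduce it to the previous case via a splitting $w = w' + w''$ together with a commutator identity. An alternative is a local–global route: Lemma \ref{lin LG}-style patching with a Quillen-type principle for $\mathrm{ESp}_{2n}(R[X],I[X])$, applied to $g\,se_{ij}(xX)\,g^{-1}$. Over local rings, $g$ factors through $\mathrm{ESp}$ times diagonal-type matrices, and normality there is immediate.

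\emph{Main obstacle.} The hard part is Step 2: expressing $T(w,v,\alpha)$ as a product of relative elementary conjugates when $w$ is an arbitrary unimodular vector. This is where the sign bookkeeping of $se_{ij}$ and the condition $n \ge 2$ enter. Since the paper treats this lemma as known, I would cite Kopeiko for these computations rather than redo them.
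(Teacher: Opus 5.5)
The paper does not prove this lemma. It imports it from Kopeiko and uses it as a black box, for instance in Proof 2 of Theorem \ref{RST normality}. Your proposal ends in the same citation, so relative to the paper nothing is missing.

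Your opening reduction is sound. Since $\mathrm{ESp}_{2n}(R,I)$ is generated by $\mathrm{ESp}_{2n}(R)$-conjugates of $se_{ij}(x)$ with $x\in I$, it suffices to show that $g\,se_{ij}(x)\,g^{-1}=T(gu,gv,\alpha)$ lies in $\mathrm{ESp}_{2n}(R,I)$ for every $g\in\mathrm{Sp}_{2n}(R)$. What you add beyond the paper is an outline of an actual proof. Repaired as below, it would give a self-contained computational argument; the paper's route buys only brevity. As written, the outline would not go through in two places.

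\emph{Step 2 ignores correction terms.} The map $T(w,\cdot,0)$ is not additive: for $v_1,v_2\perp w$,
\[ T(w,v_1,0)\,T(w,v_2,0)=T(w,v_1+v_2,\langle v_1,v_2\rangle). \]
So the Koszul decomposition of $v$ leaves an extra long-root factor $T(w,0,\gamma)$ with $\gamma\in I^2$. That decomposition also needs $w$ unimodular, as in Suslin's lemma; your statement of that lemma drops this hypothesis and the condition $n\ge 3$. Consequently Step 2 depends on Step 3, not the other way round.

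\emph{Step 3 cannot be reduced to the short-root case by commutators.} Such commutators only produce doubled long-root parameters: $[T(w,v_1,0),T(w,v_2,0)]=T(w,0,2\langle v_1,v_2\rangle)$, and $[se_{13}(a),se_{32}(b)]=se_{12}(2ab)$. Here $2$ need not be a unit, so the commutator must involve a long-root generator on a spare hyperbolic pair. For example, take $w$ with $w_3=w_4=0$, $\beta\in I$, and $[M,N]=MNM^{-1}N^{-1}$. Then
\[ [\,T(e_3,w,0),\ se_{43}(-\beta)\,]=T(w,0,\beta)\,T(e_4,\beta w,0), \]
where $T(e_3,w,0)\in\mathrm{ESp}_{2n}(R)$ and $T(e_4,\beta w,0)\in\mathrm{ESp}_{2n}(I)$. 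Combine this with the splitting identity, valid whenever $\langle w',w''\rangle=0$:
\[ T(w'+w'',0,\beta)=T(w',0,\beta)\,T(w'',0,\beta)\,T(w',\beta w'',0). \]
Applying it to the decomposition of $w$ along hyperbolic planes disposes of $T(w,0,\beta)$ for arbitrary $w$ once $n\ge 2$. This is the concrete form your splitting-plus-commutator step has to take.

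\emph{The local--global alternative.} Here you need a relative local--global principle for $\mathrm{ESp}_{2n}(R[X],I[X])$ that holds without assuming $R=2R$. Its proof (for example by dilation and commutator calculus) must not already use the normality being established. The paper's Lemma \ref{sym LG} cannot be borrowed for this, since it assumes $R=2R$.
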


\noindent The following result gives a relation between an alternating matrix of Pfaffian $1$, and the standard alternating matrix, over a local ring.

\begin{lemma}[\cite{ChatRao}, Lemma 5.2]
\label{phi conjugate of psi}
Let \((R, \mathfrak{m})\) be a local ring, and \(I\) be an ideal of \(R\). Let \(\varphi\) be an alternating matrix of Pfaffian \(1\) over \(R\), such that \(\varphi \equiv \psi_n \pmod{I}\). Then \(\varphi\) is of the form \((1 \perp \varepsilon)^\top \psi_n (1 \perp \varepsilon)\), for some \(\varepsilon \in \mathrm{E}_{2n-1}(R, I)\).
\end{lemma}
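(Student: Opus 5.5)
We argue by induction on $n$, peeling off one hyperbolic plane at a time by conjugating with matrices of the form $1\perp\varepsilon$. Because the first basis vector is never moved, we cannot work on the first column with column operations that involve it. Instead we act on the first \emph{row} of $\varphi$ from the right, using operations among columns $2,\dots,2n$.

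\textbf{Base case.} For $n=1$, an alternating $2\times 2$ matrix is $\begin{pmatrix}0&a\\-a&0\end{pmatrix}$ with Pfaffian $a$. So Pfaffian $1$ forces $\varphi=\psi_1$, and we take $\varepsilon=1$.

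\textbf{Inductive step, first row.} Let $n\ge 2$ and write the first row of $\varphi$ as $(0,v)$, with $v=(\varphi_{12},\dots,\varphi_{1,2n})\in R^{2n-1}$.
\begin{itemize}
\item Since $\varphi$ is invertible, $v$ is unimodular.
\item Since $\varphi\equiv\psi_n \pmod I$, we have $v\equiv e_1^{\top}\pmod I$, so $v\in \mathrm{Um}_{2n-1}(R,I)$ (as a row).
\end{itemize}
The first key step is to find $\varepsilon_1\in \mathrm{E}_{2n-1}(R,I)$ with $v\varepsilon_1=e_1^{\top}$.
\begin{itemize}
\item If $I\subseteq\mathfrak m$, then $u:=\varphi_{12}\in 1+I$ is a unit. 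Right multiplication by $E_{1j}(-u^{-1}\varphi_{1,j+1})$ has all entries in $I$ and clears the other entries, reducing to $(u,0,\dots,0)$.
\item To turn $u$ into $1$, use the usual three-step trick. First put $(1-u)u^{-1}u=1-u\in I$ into the second slot. Then add the second slot to the first, using $E_{21}(1)$. Finally clear the second slot with an element of $I$.
\item The resulting product is a conjugate of generators of $\mathrm{E}_{2n-1}(I)$ by elements of $\mathrm{E}_{2n-1}(R)$, so it lies in $\mathrm{E}_{2n-1}(R,I)$.
\item If $I=R$, then $\mathrm{E}_{2n-1}(R,I)=\mathrm{E}_{2n-1}(R)$, and transitivity of elementary matrices on unimodular rows over a local ring suffices.
\end{itemize}
I expect this relative completion to be the main point requiring care, namely that every operation stays in the relative group.

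After replacing $\varphi$ by $(1\perp\varepsilon_1)^{\top}\varphi(1\perp\varepsilon_1)$, the first row is $e_2^{\top}$ and, by alternation, the first column is $-e_2$. The new matrix is still alternating, still has Pfaffian $1$ (since $\det(1\perp\varepsilon_1)=1$), and is still $\equiv\psi_n\pmod I$ because $\varepsilon_1\equiv 1\pmod I$.

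\textbf{Inductive step, second row.} Next we clear the second row and column outside the $(1,2)$ block. The entries $\varphi_{2j}$ for $j\ge3$ lie in $I$. Conjugate by $1\perp\varepsilon_2$, where $\varepsilon_2$ is a product of $E_{1,j-1}(c_j)$ in the $(2n-1)$-indexing; this adds $c_j$ times column $2$ to column $j$. Take $c_j=-\varphi_{2j}\in I$ (up to sign), chosen so that the $(2,j)$ entries vanish.

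These operations fix the first row $e_2^{\top}$, because its $j$-th entries are $0$. Alternation then clears column $2$ as well. The result is $\psi_1\perp\varphi'$, where $\varphi'$ is alternating of size $2n-2$ with Pfaffian $1$ and $\varphi'\equiv\psi_{n-1}\pmod I$.

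\textbf{Induction and assembly.} By induction, $\varphi'=(1\perp\varepsilon')^{\top}\psi_{n-1}(1\perp\varepsilon')$ with $\varepsilon'\in \mathrm{E}_{2n-3}(R,I)$. Hence $\psi_1\perp\varphi'=(1\perp\varepsilon'')^{\top}\psi_n(1\perp\varepsilon'')$ with $\varepsilon''=1_2\perp\varepsilon'\in\mathrm{E}_{2n-1}(R,I)$.

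Unwinding the conjugations gives
\[
\varphi=(1\perp\varepsilon)^{\top}\psi_n(1\perp\varepsilon),\qquad \varepsilon=\varepsilon''\varepsilon_2^{-1}\varepsilon_1^{-1}.
\]
This $\varepsilon$ lies in $\mathrm{E}_{2n-1}(R,I)$ because that set is a group.
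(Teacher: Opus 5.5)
Your overall plan (normalize the first row by conjugating with $1\perp\varepsilon_1$, clear the second row, split off $\psi_1$, induct) is sound, and your base case, unimodularity argument and final assembly are fine. However, two concrete steps fail as written.

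\textbf{The unit normalization is not relative.} The product $E_{12}\bigl((1-u)u^{-1}\bigr)E_{21}(1)E_{12}(u-1)$ does carry $(u,0,\dots,0)$ to $e_1^{\top}$. But modulo $I$ it reduces to $E_{21}(1)$, whereas every element of $\mathrm{E}_{2n-1}(R,I)$ reduces to the identity. The bare factor $E_{21}(1)$ has no compensating $E_{21}(-1)$, so this is not a product of conjugates of relative generators. Hence $\varepsilon_1\not\equiv 1\pmod I$, and your claim that the new matrix is still $\equiv\psi_n\pmod I$ fails. Indeed, $(1\perp E_{21}(1))^{\top}\psi_n(1\perp E_{21}(1))=\psi_n+e_{24}-e_{42}$, so after your step the $(2,4)$ entry is $\equiv 1$, and the hypothesis $\varphi_{2j}\in I$ of your next step is lost. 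The repair is cheap: prepend $E_{21}(-1)$, which fixes the row $(u,0,\dots,0)$. Then $\bigl[E_{21}(-1)E_{12}\bigl((1-u)u^{-1}\bigr)E_{21}(1)\bigr]\cdot E_{12}(u-1)$ is a conjugate of a relative generator times a relative generator, and it still sends $(u,0,\dots,0)$ to $e_1^{\top}$.

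\textbf{The second step operates in the wrong direction.} In $2n$-indexing, $1\perp E_{1,j-1}(c_j)=E_{2j}(c_j)$. Conjugating by it adds $c_j$ times column $2$ to column $j$, and $c_j$ times row $2$ to row $j$.
\begin{itemize}
\item Since $\varphi_{22}=0$, the $(2,j)$ entry stays $\varphi_{2j}+c_j\varphi_{22}=\varphi_{2j}$, so nothing is cleared.
\item Since $\varphi_{12}=1$, the $(1,j)$ entry becomes $c_j$, which destroys the normalized first row.
\end{itemize}
Your choice $c_j=\mp\varphi_{2j}$ is what you would take if you could add column $1$, and that is precisely what the shape $1\perp\varepsilon$ forbids.

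Instead, add the columns $k\ge 3$ into column $2$. Take $1\perp\varepsilon_2=I_{2n}+\sum_{k\ge 3}w_k e_{k2}$, i.e.\ $\varepsilon_2=\prod_{k\ge 3}E_{k-1,1}(w_k)$.
\begin{itemize}
\item This fixes row $1$, because columns $k\ge 3$ have first entry $0$.
\item It leaves the lower-right block $\varphi'$ unchanged.
\item It turns the $(2,j)$ entry into $\varphi_{2j}+\sum_k w_k\varphi_{kj}$.
\end{itemize}
So you need $w^{\top}\varphi'=-r$, where $r=(\varphi_{23},\dots,\varphi_{2,2n})$. This is solvable: expanding the determinant along the first row gives $\det\varphi'=\det\varphi=1$. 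Moreover, $w$ has entries in $I$ because $r$ does, so $\varepsilon_2\in\mathrm{E}_{2n-1}(I)$.

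With these two repairs, the rest of your induction and the assembly $\varepsilon=\varepsilon''\varepsilon_2^{-1}\varepsilon_1^{-1}$ go through.
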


\begin{remark}[\cite{ChatRao}, Remark 5.4]
\label{phi over local}
Let \(\varphi\) be an alternating matrix of Pfaffian \(1\), over \(R\). Let \(\mathfrak{m}\) be a maximal ideal of \(R\). We will get \(\varepsilon(\mathfrak{m}) \in \mathrm{E}_{2n-1}(R_{\mathfrak{m}})\) such that over \(R_{\mathfrak{m}}\) we have \(\varphi = (1 \perp \varepsilon(\mathfrak{m}))^\top \psi_n (1 \perp \varepsilon(\mathfrak{m}))\). Let \(a\) be the product of denominators of all the entries of \(\varepsilon(\mathfrak{m})\). Clearly \(a\) is not in \(\mathfrak{m}\). Hence, we get \(\varepsilon\) from \(\mathrm{E}_{2n-1}(R_a)\) such that \(\varphi = (1 \perp \varepsilon)^\top \psi_n (1 \perp \varepsilon)\). When dealing with the relative case with respect to an ideal \(I\) of \(R\), we will always assume that the alternating matrix \(\varphi\) of Pfaffian \(1\) is congruent to \(\psi_n \pmod{I}\). Using Lemma \ref{phi conjugate of psi}, we get that over ring \(R_{\mathfrak{m}}\) we have \(\varphi = (1 \perp \varepsilon)^\top \psi_n (1 \perp \varepsilon)\), where \(\varepsilon \in \mathrm{E}_{2n-1}(R_a, I_a)\), for some \(a \notin \mathfrak{m}\).
\end{remark}

\begin{remark}[\cite{ChatRao}, Remark 5.12]
Let \(P = \oplus_{i=1}^{2n} Re_i\) be a free \(R\)-module. The non-degenerate alternating bilinear form \(\langle , \rangle\) on \(P\) corresponds to an alternating matrix \(\varphi\) with respect to the basis \(\{e_1, e_2, \ldots, e_{2n}\}\) of \(P\), and we write \(\langle p, q \rangle = p^\top \varphi q\). In this case the symplectic transvection \(\sigma(p) = p + \langle u, p \rangle v + \langle v, p \rangle u + \alpha \langle u, p \rangle u\) corresponds to the matrix \((I_{2n} + vu^\top \varphi + uv^\top \varphi)(I_{2n} + \alpha uu^\top \varphi )\), and the group generated by them is denoted by \(\operatorname{Trans_{Sp}}(P, \langle , \rangle_\varphi)\). Also in this case \(\operatorname{ETrans_{Sp}}(Q, \langle , \rangle_{\psi_1 \perp \varphi})\) will be generated by the matrices of the form\\ \(\rho_\varphi(q, \alpha) = \begin{pmatrix} 1 & 0 & 0 \\ -\alpha & 1 & q^\top \varphi \\ -q & 0 & I_{2n} \end{pmatrix}\), and 
\(\mu_\varphi(q, \beta) = \begin{pmatrix} 1 & \beta & -q^\top \varphi\\ 0 & 1 & 0 \\ 0 & -q & I_{2n} \end{pmatrix}\).
\end{remark}

\noindent The following result shows that the relative transvection groups are generalizations of the relative elementary symplectic group.

\begin{lemma}[\cite{ChatRao}, Lemma 5.13, 5.14]
\label{trans sp=esp}
    Let $R$ be a commutative ring with $R = 2R$, and let $I$ be an ideal of $R$. 
Let $P$ be a free $R$-module of rank $2n$, $n \geq 1$. Keeping the notation of the previous remark, if $\varphi = \psi_n$, the standard alternating matrix, then
\[
\ETransSp(Q, IQ, \langle,\rangle_{\psi_{n+1}}) = \TransSp(Q, IQ, \langle,\rangle_{\psi_{n+1}}) = \ESp_{2n+2}(R, I).
\]
\end{lemma}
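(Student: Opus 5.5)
We prove the two equalities separately. The strategy is to first match the elementary relative group with $\ESp_{2n+2}(R,I)$ by comparing generators, and then to squeeze the relative transvection group in between.

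Identify $Q=R^2\oplus P$ with $R^{2n+2}$ carrying the form $\psi_1\perp\psi_n=\psi_{n+1}$. By the preceding remark (with $\varphi=\psi_n$), the generators $\rho_{\psi_n}(q,\alpha)$ and $\mu_{\psi_n}(q,\beta)$ become explicit matrices.

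\textbf{Step 1: $\ETransSp(Q,IQ,\langle,\rangle_{\psi_{n+1}})=\ESp_{2n+2}(R,I)$.}
\begin{enumerate}
\item Write each $\rho_{\psi_n}(q,\alpha)$ as a product of the matrices $se_{2j}(\cdot)$, $j\geq 3$, and $se_{21}(\cdot)$. Write each $\mu_{\psi_n}(q,\beta)$ as a product of $se_{1j}(\cdot)$, $j\geq 3$, and $se_{12}(\cdot)$. In both cases the coefficients are the coordinates of $q$ together with $\alpha$ or $\beta$.
\item Taking $q=e_j$ and $\alpha=0$, and conversely isolating one coordinate, we see that the elementary transvections are exactly the products of the matrices $se_{ij}$ with $\{i,j\}\cap\{1,2\}\neq\emptyset$. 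Hence $\ETransSp(Q)$ contains all such $se_{ij}(a)$. Every $se_{ij}(a)$ with $i,j\geq3$ is a commutator of two of these, for instance $[se_{i1}(a),se_{1j}(\pm1)]$. Therefore $\ETransSp(Q)=\ESp_{2n+2}(R)$.
\item In the relative setting, $\ETransSp(IQ)$ is generated by the $se_{ij}(x)$, $x\in I$, with $\{i,j\}\cap\{1,2\}\neq\emptyset$. The remaining generators $se_{ij}(x)$ with $i,j\geq 3$ are commutators $[se_{i1}(x),se_{1j}(\pm1)]$. These are products of $\ESp_{2n+2}(R)$-conjugates of relative generators, so they lie in the normal closure.
\item Taking normal closures in the same group $\ETransSp(Q)=\ESp_{2n+2}(R)$ then gives the first equality.
\end{enumerate}

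\textbf{Step 2: $\ETransSp(Q,IQ)\subseteq\TransSp(Q,IQ)$.}
\begin{enumerate}
\item The relative elementary generators are relative symplectic transvections. For $\rho$ take $u=(0,1,0)$ and $v=(0,0,q)\in IQ$ with $\alpha\in I$; for $\mu$ take $u=(-1,0,0)$.
\item $\TransSp(Q,IQ)$ is normal in $\mathrm{Sp}(Q)$. This follows from the same conjugation computation as in Proof 1 of Theorem~\ref{RLT normality}. For $\gamma\in\mathrm{Sp}(Q)$, the conjugate $\gamma\sigma\gamma^{-1}$ is the transvection with data $(\gamma u,\gamma v,\alpha)$, because $\gamma$ preserves the form. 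The conditions $\langle\gamma u,\gamma v\rangle=0$, $\gamma v\in IQ$ and unimodularity are all preserved.
\item Since $\TransSp(Q,IQ)$ is normal and contains the relative elementary generators, it contains their normal closure.
\end{enumerate}

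\textbf{Step 3: $\TransSp(Q,IQ)\subseteq\ESp_{2n+2}(R,I)$.} This is the main obstacle.
\begin{enumerate}
\item Let $\sigma$ be a relative transvection with data $(u,v,\alpha)$, $v\in IQ$, $\alpha\in I$. Form the homotopy $\sigma(X)$ with data $(u,vX,\alpha X)$. Then $\sigma(X)\in\TransSp(Q[X],IQ[X])$, $\sigma(0)=Id$ and $\sigma(1)=\sigma$, exactly as in Lemma~\ref{lin homotopy}.
\item By the relative local-global principle for $\ESp$ (the symplectic analogue of Lemma~\ref{lin LG}, from Kopeiko / \cite{ChatRao}), it suffices to show $\sigma(X)_{\mathfrak m}\in\ESp_{2n+2}(R_{\mathfrak m}[X],I_{\mathfrak m}[X])$ for every maximal ideal $\mathfrak m$.
\item Over the local ring $R_{\mathfrak m}$, the vector $u$ may be assumed unimodular. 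If only $v$ is unimodular, we use $R=2R$ to exchange the roles of $u$ and $v$. The expression $\langle u,p\rangle v+\langle v,p\rangle u$ is symmetric in $u,v$. The extra term $\alpha\langle u,p\rangle u$ is rewritten by splitting $\sigma$ as a product of transvections of the shape $p\mapsto p+\langle w,p\rangle w$, using $\tfrac12$.
\item Since $n+1\geq2$, $\ESp_{2n+2}(R_{\mathfrak m})$ acts transitively on unimodular vectors. Choose $\varepsilon$ in this group with $\varepsilon u=e_2$.
\item Then $\varepsilon\sigma(X)\varepsilon^{-1}$ has data $(e_2,\varepsilon vX,\alpha X)$, with $\langle e_2,\varepsilon v\rangle=0$. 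This is a relative elementary transvection $\rho(q X,\alpha X)$ with $q\in IP$, so by Step 1 it lies in $\ESp(R_{\mathfrak m}[X],I_{\mathfrak m}[X])$.
\item Finally, $\ESp(R,I)$ is normal in $\ESp(R)$ (Lemma~\ref{Kop normality}). Hence $\sigma(X)_{\mathfrak m}$ itself lies in $\ESp(R_{\mathfrak m}[X],I_{\mathfrak m}[X])$.
\end{enumerate}

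The delicate points are the reduction to unimodular $u$ using $2\in R^*$, and arranging the normalisation $\varepsilon u=e_2$ compatibly with the relative condition.

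Combining the three steps gives the chain of equalities in the lemma.
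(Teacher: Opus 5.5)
The paper gives no proof of this lemma; it imports it from \cite{ChatRao} (Lemmas 5.13, 5.14). Your argument is a self-contained reconstruction from tools the paper uses elsewhere, and its structure is sound:
\begin{itemize}
\item Step 1 is a generator comparison.
\item Step 2 is exactly the conjugation computation in Proof 1 of Theorem \ref{RST normality}. That computation does not use Lemma \ref{rst=rest}, so nothing circular enters.
\item Step 3 runs the homotopy-plus-patching scheme of Proof 2 on a single transvection. Transitivity of $\ESp_{2n+2}(R_{\mathfrak m})$ on unimodular vectors does the local work.
\end{itemize}
The local computation is correct, with one small slip. If $\varepsilon u=e_2$ and $\varepsilon v=(0,b,q)$, the conjugate is $\rho(qX,(\alpha+2b)X)$, not $\rho(qX,\alpha X)$. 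This is harmless, since $b\in I$. A patching-free alternative would be a direct matrix argument: a symplectic analogue of Suslin's lemma, in the spirit of Kopeiko's proof of Lemma \ref{Kop normality}, showing globally that transvection matrices with a unimodular vector are relatively elementary. Your route instead costs a relative local-global principle for $\ESp_{2n+2}(R[X],I[X])$. Cite one proved directly for the elementary symplectic group, rather than Lemma \ref{sym LG}, whose proof may itself rely on the identifications you are establishing.

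Two details need repair.

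First, for long roots your commutator gives $[se_{i1}(a),se_{1\sigma(i)}(b)]=se_{i\sigma(i)}(2ab)$, not $se_{i\sigma(i)}(a)$. So you must take $b=\tfrac12$, which is fine since $R=2R$. Alternatively, use $[se_{i1}(1),se_{12}(b)]=se_{i2}(b)\,se_{i\sigma(i)}(\pm b)$, which also handles the relative long roots.

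Second, and more important, the case in Step 3 where only $v$ is unimodular must be handled explicitly.
\begin{itemize}
\item Since $v\in IQ$, unimodularity of $v$ forces $I=R$. Say this, because it is the only reason your $\tfrac12$-splitting is allowed: its coefficients leave $I$.
\item As written, the homotopy with data $(u,vX,\alpha X)$ has no unimodular vector over $R_{\mathfrak m}[X]$ when $u\in\mathfrak{m}Q_{\mathfrak m}$. So item 4 of Step 3 cannot be applied to it directly.
\item In that case, put $N_w(p)=\langle w,p\rangle w$. Then $\sigma(X)=1+\tfrac{X}{2}(N_{u+v}-N_{u-v})+\alpha X\bigl(\tfrac12(N_{u+v}+N_{u-v})-N_v\bigr)$.
\item The vectors $u+v$, $u-v$, $v$ are pairwise orthogonal and unimodular over $R_{\mathfrak m}$. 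Hence $\sigma(X)$ is a product of commuting transvections $1+cXN_w$, and each is conjugate by an element of $\ESp_{2n+2}(R_{\mathfrak m})$ to some $se_{21}(\cdot)$.
\end{itemize}
With these points made explicit, your three steps prove the lemma.
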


\noindent The following result gives a relation between the symplectic groups $\mathrm{Sp}_\varphi(R)$, and $\mathrm{Sp}_{\varphi^*}(R)$, when $\varphi$ and $\varphi^*$ are conjugates of each other.

\begin{lemma}[\cite{ChatRao2}, Lemma 3.6]
\label{sym groups are conjugates}
    Let $\varphi$ and $\varphi^*$ be two invertible alternating matrices such that 
$\varphi = (1 \perp \varepsilon)^\top \varphi^* (1 \perp \varepsilon)$, for some 
$\varepsilon \in \operatorname{E}_{2n-1}(R)$. Then we have
\[
\operatorname{Sp}_{\varphi}(R) = (1 \perp \varepsilon)^{-1} \operatorname{Sp}_{\varphi^*} (R) (1 \perp \varepsilon).
\]
\end{lemma}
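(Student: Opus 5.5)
We have to prove Lemma \ref{sym groups are conjugates}: if $\varphi = (1\perp\varepsilon)^\top \varphi^* (1\perp\varepsilon)$ then $\operatorname{Sp}_{\varphi}(R) = (1\perp\varepsilon)^{-1}\operatorname{Sp}_{\varphi^*}(R)(1\perp\varepsilon)$. The plan is a direct verification of the two inclusions, using only that $g := 1\perp\varepsilon$ is invertible. Note that $\varepsilon \in \operatorname{E}_{2n-1}(R)\subseteq \operatorname{GL}_{2n-1}(R)$, so $g \in \operatorname{GL}_{2n}(R)$ with $g^{-1} = 1\perp\varepsilon^{-1}$. The elementary nature of $\varepsilon$ plays no role beyond invertibility.

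For the inclusion ``$\supseteq$'', take $M \in \operatorname{Sp}_{\varphi^*}(R)$, so $M^\top\varphi^* M = \varphi^*$, and set $N = g^{-1}Mg$. We compute
\[
N^\top \varphi N = g^\top M^\top (g^{-1})^\top g^\top \varphi^* g\, g^{-1} M g = g^\top M^\top \varphi^* M g = g^\top \varphi^* g = \varphi .
\]
Here we substituted $\varphi = g^\top\varphi^* g$ and cancelled $(g^{-1})^\top g^\top = I_{2n}$ and $g g^{-1}=I_{2n}$. Also $N\in\operatorname{GL}_{2n}(R)$, so $N\in\operatorname{Sp}_\varphi(R)$.

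For ``$\subseteq$'', take $N\in\operatorname{Sp}_\varphi(R)$ and put $M = gNg^{-1}$. Using $\varphi^* = (g^{-1})^\top \varphi g^{-1}$, which follows from the hypothesis, the symmetric computation
\[
M^\top\varphi^* M = (g^{-1})^\top N^\top g^\top (g^{-1})^\top \varphi g^{-1} g N g^{-1} = (g^{-1})^\top N^\top\varphi N g^{-1} = (g^{-1})^\top\varphi g^{-1} = \varphi^*
\]
shows $M\in \operatorname{Sp}_{\varphi^*}(R)$. Hence $N = g^{-1}Mg$ lies in the right-hand side.

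There is no real obstacle. The only point needing care is keeping the transposes and inverses in the correct order: one must use $(g^{-1})^\top = (g^\top)^{-1}$, and remember that conjugation by $g^{-1}(\cdot)g$ corresponds to the change of form $\varphi = g^\top\varphi^* g$ rather than its inverse.
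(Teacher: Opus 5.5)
Your proof is correct: both inclusions check out, the transposes and inverses are handled properly (in particular $\varphi^* = (g^{-1})^\top \varphi\, g^{-1}$), and, as you note, only the invertibility of $g = 1\perp\varepsilon$ is used. The paper cites this statement without proof, but your argument is the same direct two-inclusion computation it gives for the orthogonal analogue, Lemma~\ref{orth conj}, where $\varepsilon$ is likewise allowed to be any element of $\operatorname{GL}_{2n}(R)$.
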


\begin{lemma}[\cite{ChatRao}, Lemma 5.16]
\label{sym trans groups are conjugates}
Let $I$ be an ideal of $R$, and $P$ be a free $R$-module of rank $2n$. Let $(P, \langle \ , \ \rangle_{\varphi})$, and $(P, \langle \ , \ \rangle_{\varphi^*})$ be two symplectic $R$-modules with $\varphi = (1 \perp \varepsilon)^\top \varphi^* (1 \perp \varepsilon)$, for some $\varepsilon \in \mathrm{E}_{2n-1}(R, I)$. Then
\[
\operatorname{Trans_{Sp}}(P, IP, \langle \ , \ \rangle_{\varphi}) = (1 \perp \varepsilon)^{-1} \operatorname{Trans_{Sp}}(P, IP, \langle \ , \ \rangle_{\varphi^*}) (1 \perp \varepsilon),
\]
\[
\operatorname{ETrans_{Sp}}(Q, IQ, \langle \ , \ \rangle_{\psi_{1} \perp \varphi}) = (I_3 \perp \varepsilon)^{-1} \operatorname{ETrans_{Sp}}(Q, IQ, \langle \ , \ \rangle_{\psi_{1} \perp \varphi^*}) (I_3 \perp \varepsilon).
\]
\end{lemma}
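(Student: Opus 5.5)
The plan is to realize conjugation by $(1\perp\varepsilon)$ as an isometry between the two symplectic modules, and to check that it carries each type of generator to a generator of the same type. Set $\varepsilon' = 1\perp\varepsilon \in \mathrm{GL}_{2n}(R)$. The hypothesis $\varphi = \varepsilon'^{\top}\varphi^*\varepsilon'$ says exactly that
\[
\langle p,q\rangle_{\varphi} = p^{\top}\varphi q = (\varepsilon' p)^{\top}\varphi^*(\varepsilon' q) = \langle \varepsilon' p,\varepsilon' q\rangle_{\varphi^*}
\]
for all $p,q\in P$. So $x\mapsto \varepsilon' x$ is an isometry $(P,\langle\,,\rangle_\varphi)\to(P,\langle\,,\rangle_{\varphi^*})$. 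Both equalities will follow by transporting generators along this isometry.

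\textbf{First equality.} Take a relative symplectic transvection $\sigma$ of $(P,\langle\,,\rangle_\varphi)$ with data $(u,v,\alpha)$, where $\alpha\in I$, $v\in IP$, $\langle u,v\rangle_\varphi=0$, and $u$ or $v$ is unimodular. I will compute $\varepsilon'\sigma\varepsilon'^{-1}(p)$ by writing $p=\varepsilon' p'$ and using the displayed identity. This gives
\[
\varepsilon'\sigma\varepsilon'^{-1}(p)=p+\langle \varepsilon'u,p\rangle_{\varphi^*}\,\varepsilon'v+\langle \varepsilon'v,p\rangle_{\varphi^*}\,\varepsilon'u+\alpha\langle \varepsilon'u,p\rangle_{\varphi^*}\,\varepsilon'u,
\]
which is the symplectic transvection of $(P,\langle\,,\rangle_{\varphi^*})$ with data $(\varepsilon'u,\varepsilon'v,\alpha)$. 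The conditions are preserved:
\begin{itemize}
\item $\langle\varepsilon'u,\varepsilon'v\rangle_{\varphi^*}=\langle u,v\rangle_\varphi=0$;
\item $\varepsilon'v\in IP$, because $\varepsilon'$ is $R$-linear;
\item unimodularity is preserved, because $\varepsilon'$ is an automorphism;
\item $\alpha\in I$ is unchanged.
\end{itemize}
Hence $\varepsilon'\,\TransSp(P,IP,\langle\,,\rangle_\varphi)\,\varepsilon'^{-1}\subseteq \TransSp(P,IP,\langle\,,\rangle_{\varphi^*})$. For the reverse inclusion I run the same argument with $\varepsilon'^{-1}$, which satisfies $\varphi^*=(\varepsilon'^{-1})^{\top}\varphi\,\varepsilon'^{-1}$. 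This gives the first equality.

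\textbf{Second equality.} On $Q=R^2\oplus P$ the form is $\psi_1\perp\varphi$, and $I_3\perp\varepsilon = I_2\perp\varepsilon'$. Therefore $\psi_1\perp\varphi=(I_3\perp\varepsilon)^{\top}(\psi_1\perp\varphi^*)(I_3\perp\varepsilon)$. I then conjugate the matrices $\rho_\varphi(q,\alpha)$ and $\mu_\varphi(q,\beta)$ from the preceding remark directly.

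In $(I_2\perp\varepsilon')\,\rho_\varphi(q,\alpha)\,(I_2\perp\varepsilon'^{-1})$:
\begin{itemize}
\item the $(2,3)$ block becomes $q^{\top}\varphi\varepsilon'^{-1}=q^{\top}\varepsilon'^{\top}\varphi^*=(\varepsilon'q)^{\top}\varphi^*$;
\item the $(3,1)$ block becomes $-\varepsilon'q$;
\item the $(3,3)$ block remains $I_{2n}$;
\item $\alpha$ is untouched.
\end{itemize}
So the conjugate is $\rho_{\varphi^*}(\varepsilon'q,\alpha)$, and similarly $\mu_\varphi(q,\beta)$ conjugates to $\mu_{\varphi^*}(\varepsilon'q,\beta)$. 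Since $q\mapsto\varepsilon'q$ is a bijection of $P$ preserving $IP$, conjugation by $I_3\perp\varepsilon$ maps the generating set of $\ETransSp(Q,\langle\,,\rangle_{\psi_1\perp\varphi})$ bijectively onto that of $\ETransSp(Q,\langle\,,\rangle_{\psi_1\perp\varphi^*})$. It does the same for the relative generators, which give the groups $\ETransSp(IQ,\cdot)$.

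The step needing the most care is passing to the normal closures $\ETransSp(Q,IQ,\cdot)$. A group isomorphism induced by conjugation carries a subgroup and its normal closure inside the ambient group to the corresponding subgroup and normal closure. So once both the ambient groups and the relative subgroups are matched, the normal closures match as well, which gives the second equality. The only other point to verify carefully is the block-matrix identity $\varphi\varepsilon'^{-1}=\varepsilon'^{\top}\varphi^*$, which is a rearrangement of the hypothesis. The assumption $\varepsilon\in\En_{2n-1}(R,I)$ is not needed beyond invertibility of $\varepsilon'$.
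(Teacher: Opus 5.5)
Your proof is correct, and it is essentially the approach the paper uses for the analogous statements. The map $x\mapsto(1\perp\varepsilon)x$ is an isometry $(P,\langle\,,\rangle_{\varphi})\to(P,\langle\,,\rangle_{\varphi^*})$, conjugation by $I_3\perp\varepsilon$ sends $\rho_\varphi(q,\alpha)$ and $\mu_\varphi(q,\beta)$ to $\rho_{\varphi^*}((1\perp\varepsilon)q,\alpha)$ and $\mu_{\varphi^*}((1\perp\varepsilon)q,\beta)$, and so both equalities and the correspondence of normal closures follow. The paper only quotes this lemma from Chattopadhyay--Rao, but your transport-of-structure computation is the one it carries out in Lemma~\ref{orth conj} and in Proof~1 of Theorem~\ref{RST normality}. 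Your remark that only invertibility of $1\perp\varepsilon$ is used matches the orthogonal analogue, Lemma~\ref{reot conj}, which is stated for arbitrary $\varepsilon\in\mathrm{GL}_{2n}(R)$.
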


\vskip 0.1mm
\noindent \begin{lemma}[\cite{ChatRao}, Lemma 5.22]
    Let $(P, \langle, \rangle)$ be a symplectic $R$-module, and $\alpha \in \TransSp(P, \langle, \rangle)$. 
Then there exists $\beta(X) \in \TransSp(P[X], \langle, \rangle_{\otimes R[X]})$ such that 
$\beta(1) = \alpha$, and $\beta(0) = Id$.

\end{lemma}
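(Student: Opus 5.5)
The statement to prove is the last lemma quoted ([ChatRao, Lemma 5.22]): for $\alpha \in \TransSp(P,\langle,\rangle)$ there is $\beta(X)\in\TransSp(P[X],\langle,\rangle_{\otimes R[X]})$ with $\beta(1)=\alpha$ and $\beta(0)=Id$. I would imitate the proof of Lemma \ref{lin homotopy}, working generator by generator. Write $\alpha=\sigma_1\cdots\sigma_k$, where each $\sigma_i$ is a symplectic transvection
\[
\sigma_i(p)=p+\langle u_i,p\rangle v_i+\langle v_i,p\rangle u_i+\alpha_i\langle u_i,p\rangle u_i,
\]
with $\langle u_i,v_i\rangle=0$ and either $u_i$ or $v_i$ unimodular. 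If each $\sigma_i$ lifts to some $\sigma_i(X)\in\TransSp(P[X])$ with $\sigma_i(1)=\sigma_i$ and $\sigma_i(0)=Id$, then $\beta(X)=\sigma_1(X)\cdots\sigma_k(X)$ does the job, because evaluation at $X=0$ and $X=1$ are group homomorphisms. Inverses cause no trouble, since the inverse of a transvection is again a transvection of the same shape.

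The lift is built by scaling the non-unimodular vector by $X$, as in the linear case. The form $\langle,\rangle_{\otimes R[X]}$ of Remark \ref{P[X] is symplectic} is $R[X]$-bilinear, which is what makes both computations below work.

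\textbf{Case 1: $u$ is unimodular.} Set
\[
\sigma(X)(p)=p+\langle u,p\rangle (vX)+\langle vX,p\rangle u+(\alpha X)\langle u,p\rangle u .
\]
This is a symplectic transvection of $P[X]$ for the data $(u,vX,\alpha X)$. The constant vector $u$ stays unimodular over $R[X]$: if $\theta(u)=1$ with $\theta\in P^*$, then $\theta\otimes R[X]$ still sends $u$ to $1$. Orthogonality holds since $\langle u,vX\rangle=X\langle u,v\rangle=0$. Hence $\sigma(X)\in\TransSp(P[X])$, $\sigma(1)=\sigma$, and $\sigma(0)=Id$.

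\textbf{Case 2: $v$ is unimodular and $u$ need not be.} Here I would replace $u$ by $uX$ and keep $\alpha$, giving
\[
p+X\langle u,p\rangle v+X\langle v,p\rangle u+\alpha X^2\langle u,p\rangle u .
\]
This is the transvection attached to $(uX,v,\alpha)$, where $v$ is unimodular and $\langle uX,v\rangle=0$. At $X=1$ it is $\sigma$, and at $X=0$ it is the identity.

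The one point to check carefully is that $\langle,\rangle_{\otimes R[X]}$ is non-degenerate and that unimodularity of constant vectors survives base change. Both are supplied by Remark \ref{P[X] is symplectic}, so I expect no real obstacle. The only subtlety is the choice in Case 2: the scalar must multiply a non-unimodular vector, and $\alpha$ must not be scaled, so that the $X^2$ term comes out consistently.
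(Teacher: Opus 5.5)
Your proof is correct and is essentially the argument the paper gives for the relative analogue, Lemma~\ref{sym homotopy}; the absolute lemma itself is only quoted from \cite{ChatRao}. Your Case 1 is exactly that construction, and your Case 2 (scaling $u$ by $X$ when only $v$ is unimodular) correctly handles the extra case the absolute statement requires, though you overstate the constraint there: scaling $\alpha$ as well, with data $(uX,v,\alpha X)$, works equally well, so the only real requirement is that the vector carrying unimodularity is left unscaled.
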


\noindent We establish a relative version of Lemma 4.15.

\begin{lemma}
\label{sym homotopy}
    Let $(P, \langle, \rangle)$ be a symplectic $R$-module, and $\alpha \in \TransSp(P, IP, \langle, \rangle)$. 
Then there exists $\beta(X) \in \TransSp(P[X], IP[X], \langle, \rangle_{\otimes R[X]})$ such that 
$\beta(1) = \alpha$, and $\beta(0) = Id$.

\end{lemma}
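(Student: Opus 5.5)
The plan mirrors the proof of Lemma \ref{lin homotopy}: lift each generator of $\alpha$ to a polynomial family by inserting the variable $X$ at a place that keeps the relative conditions, and then take the product.

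Write $\alpha=\sigma_1\cdots\sigma_r$, where each $\sigma_k$ is a relative symplectic transvection
\[
\sigma_k(p)=p+\langle u_k,p\rangle v_k+\langle v_k,p\rangle u_k+\alpha_k\langle u_k,p\rangle u_k,
\]
with $\alpha_k\in I$, $u_k\in P$, $v_k\in IP$, and $\langle u_k,v_k\rangle=0$. Inverses need no separate treatment: the inverse of such a transvection is the transvection with data $(u_k,-v_k,-\alpha_k)$, which is again relative.

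For each $k$ define $\sigma_k(X)$ on $P[X]$ by
\[
\sigma_k(X)(p)=p+\langle u_k,p\rangle (v_kX)+\langle v_kX,p\rangle u_k+(\alpha_kX)\langle u_k,p\rangle u_k,
\]
using the extended form $\langle,\rangle_{\otimes R[X]}$ of Remark \ref{P[X] is symplectic}. So $u_k$ is kept, and $v_k$ is replaced by $v_kX$ and $\alpha_k$ by $\alpha_kX$. Then set $\beta(X)=\sigma_1(X)\cdots\sigma_r(X)$.

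Checking that each $\sigma_k(X)$ lies in $\TransSp(P[X],IP[X],\langle,\rangle_{\otimes R[X]})$ takes four points:
\begin{enumerate}
\item The condition $\langle u_k,v_kX\rangle=\langle u_k,v_k\rangle X=0$ holds by bilinearity of the extended form.
\item We have $\alpha_kX\in I[X]$ and $v_kX\in IP[X]$.
\item The definition requires one of the two vectors to be unimodular. If $u_k\in\mathrm{Um}(P)$, say $\theta(u_k)=1$, then its $R[X]$-linear extension $\theta\otimes R[X]$ shows $u_k\in\mathrm{Um}(P[X])$. This is why $X$ goes on $v_k$ and $\alpha_k$ rather than on $u_k$.
\item Specialization gives $\sigma_k(1)=\sigma_k$ and $\sigma_k(0)=Id$, so $\beta(1)=\alpha$ and $\beta(0)=Id$.
\end{enumerate}

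The main obstacle is the unimodularity requirement when the unimodular vector in a generator is $v_k$ rather than $u_k$, since scaling $v_k$ by $X$ destroys unimodularity. I would handle this in two steps:
\begin{itemize}
\item First, the formula is symmetric in $u$ and $v$ apart from the $\alpha$-term. If $v_k$ is unimodular and $\alpha_k=0$, swap the roles of $u_k$ and $v_k$ before lifting.
\item In general, $v_k\in IP$ can be unimodular only in degenerate situations. I would argue that the relative definition implicitly intends $u$ to be the unimodular vector, since $v\in IP$ is forced. I would note this explicitly, exactly as the proof of Lemma \ref{lin homotopy} chooses which slot receives $X$ according to which datum is unimodular.
\end{itemize}

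With these choices $\beta(X)$ is a product of generators of $\TransSp(P[X],IP[X],\langle,\rangle_{\otimes R[X]})$, which completes the argument.
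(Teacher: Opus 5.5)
Your construction is the paper's proof: keep $u_k$, replace $v_k$ by $v_kX$ and $\alpha_k$ by $\alpha_kX$, and take the product of the lifts. The paper simply takes $u$ to be the unimodular vector without comment. The case you flag, $v_k$ unimodular, can be closed rigorously instead of by appeal to what the definition intends. First, $\theta(v_k)=1$ with $v_k\in IP$ forces $1\in I$. More simply, such a generator lifts as
\[
p\mapsto p+\langle u_kX,p\rangle v_k+\langle v_k,p\rangle u_kX+\alpha_k\langle u_kX,p\rangle u_kX.
\]
This is a relative symplectic transvection over $R[X]$: the definition puts no condition on $u$, $v_k$ stays unimodular, and $\langle u_kX,v_k\rangle=0$. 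It is $Id$ at $X=0$, which makes your swap step unnecessary.
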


\noindent \textit{Proof:} Note that $\alpha \in \TransSp(P, IP, \langle , \rangle)$ is a product of relative symplectic transvections $\sigma$, given by \[\sigma(p)=p + \langle u, p \rangle v + \langle v, p \rangle u + a \langle u, p \rangle u,\] where $a \in I$, $u \in P$, $v \in IP$ are fixed elements with $\langle u, v \rangle = 0$, and $u$ is unimodular. Define the map $\sigma X$ as follows: 
\[\text{For }p\in P,\text{ set }\sigma X(p)=p + \langle u, p \rangle vX + \langle vX, p \rangle u + aX \langle u, p \rangle u.\] 
\[\text{Define }\sigma X\bigg(\sum_ip_iX^i\bigg)=\sum_i\sigma X(p_i)X^i.\]
Here, $vX$ represents $v$ times $X$, and $aX$ represents $a$ times $X$. Note that $aX\in I[X]$, $u\in P[X]$, $vX\in IP[X]$ are elements with 
$\langle u,vX\rangle_{\otimes R[X]}=\langle u,v\rangle X=0$,
and $u$ is unimodular. Therefore, $\sigma X$ is a relative symplectic transvection. We set $\beta(X)$ to be the product of elements of the form $\sigma X$, whenever $\sigma$ appears in the expression of $\alpha$. Then $\beta(1) = \alpha$, and $\beta(0) = Id$. Moreover, each such generator lies in $\TransSp(P[X], IP[X], \langle , \rangle)$, since $vX \in IP[X]$, and $aX \in I[X]$. Hence, $\beta(X) \in \TransSp(P[X], IP[X], \langle , \rangle)$. $\hfill \square$

\begin{lemma}[\cite{ChatRao}, Theorem 5.23]
\label{rst=rest}    Let $R$ be a commutative ring with $R = 2R$, and let $I$ be an ideal of $R$. Let $(P, \langle , \rangle)$ be a symplectic $R$-module with $P$ a finitely generated projective module of rank $2n$, $n \geq 1$. Also assume that over the local ring $R_{\mathfrak{m}}$ for any maximal ideal $\mathfrak{m}$ of $R$, the alternating form $\langle , \rangle$ corresponds to the alternating matrix $\varphi_{\mathfrak{m}}$ (with respect to some basis), where $\varphi_{\mathfrak{m}} \equiv \psi_n \pmod{I}$. Then $\TransSp(Q, IQ, \langle , \rangle) = \ETransSp(Q, IQ, \langle , \rangle)$.
\end{lemma}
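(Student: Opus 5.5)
Lemma \ref{rst=rest} asserts $\TransSp(Q, IQ, \langle , \rangle) = \ETransSp(Q, IQ, \langle , \rangle)$. The plan is to prove the two inclusions separately, modelled on the linear case (Lemma \ref{rlt=relt}). The inclusion of $\ETransSp$ in $\TransSp$ is the easy one and needs no local input. Each relative elementary generator $\rho(q,\alpha)$ or $\mu(q,\beta)$, with $q\in IP$ and $\alpha,\beta\in I$, is a symplectic transvection with $u=(0,1,0)$ or $(-1,0,0)$, which is unimodular, and $v=(0,0,q)\in IQ$. So it is a relative symplectic transvection. It then remains to see that conjugating a relative symplectic transvection by an elementary symplectic transvection gives another one. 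The computation is as in Proof 1 of Theorem \ref{RLT normality}: for $\gamma\in \mathrm{Sp}(Q)$, the map $\gamma\sigma\gamma^{-1}$ is the transvection built from $(\gamma u,\gamma v,\alpha)$. This works because $\langle \gamma u, p\rangle=\langle u,\gamma^{-1}p\rangle$, and both unimodularity and membership in $IQ$ are preserved by $\gamma$. Hence the normal closure of $\ETransSp(IQ)$ in $\ETransSp(Q)$ lies in $\TransSp(Q,IQ)$.

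For the reverse inclusion I will use a local-global argument. Take a relative symplectic transvection $\sigma$ of $Q$. By Lemma \ref{sym homotopy}, applied to $Q$ itself, there is $\sigma(X)\in\TransSp(Q[X],IQ[X])$ with $\sigma(1)=\sigma$ and $\sigma(0)=Id$. Now localize at a maximal ideal $\mathfrak m$. There $P_{\mathfrak m}$ is free, and by hypothesis the form has matrix $\varphi_{\mathfrak m}\equiv\psi_n \pmod I$. Lemma \ref{phi conjugate of psi} and Remark \ref{phi over local} give $\varphi_{\mathfrak m}=(1\perp\varepsilon)^\top\psi_n(1\perp\varepsilon)$ with $\varepsilon\in \mathrm{E}_{2n-1}(R_{\mathfrak m},I_{\mathfrak m})$. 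Lemma \ref{sym trans groups are conjugates} then transports both $\TransSp$ and $\ETransSp$ of $Q_{\mathfrak m}[X]$ to the standard form $\psi_{n+1}$ by the same conjugation. In the standard case Lemma \ref{trans sp=esp} (this is where $R=2R$ is used) gives equality of both groups with $\ESp_{2n+2}(R_{\mathfrak m}[X],I_{\mathfrak m}[X])$. So $\sigma(X)_{\mathfrak m}\in\ETransSp(Q_{\mathfrak m}[X],IQ_{\mathfrak m}[X])$ for every $\mathfrak m$.

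To finish, I would invoke a symplectic relative local-global principle, the analogue of Lemma \ref{lin LG}: if $\alpha(X)\in\mathrm{Sp}(Q[X])$ with $\alpha(0)=Id$ lies in $\ETransSp(Q_{\mathfrak m}[X],IQ_{\mathfrak m}[X])$ for all $\mathfrak m$, then $\alpha(X)\in\ETransSp(Q[X],IQ[X])$. Specializing at $X=1$ then gives $\sigma\in\ETransSp(Q,IQ)$.

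The main obstacle is this local-global step. If it is not available as a black box, I would prove it by a Quillen patching argument. First comes a dilation lemma: for $a\in R$ and $\beta(X)\in\ETransSp(Q_a[X],IQ_a[X])$ with $\beta(0)=Id$, one has $\beta(b X)\in\ETransSp(Q[X],IQ[X])$ for $b$ a sufficiently high power of $a$. This is proved generator by generator using the commutator formulas among $\rho$ and $\mu$, and it requires controlling the relative (conjugated) generators, which is the delicate point. Second comes the standard splitting $\alpha(X)=\prod\alpha(b_iX)\alpha(b_{i+1}X)^{-1}$ from a partition of unity $\sum b_i a_i^{N}=1$. A secondary subtlety is that the localized description only holds over some $R_a$ with $a\notin\mathfrak m$, as in Remark \ref{phi over local}, so the conjugating matrix $\varepsilon$ must be handled over $R_a$ rather than $R_{\mathfrak m}$.
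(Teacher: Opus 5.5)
The paper does not prove this lemma itself (it is quoted from \cite{ChatRao}, Theorem 5.23), but your argument is correct and follows the standard homotopy-plus-local--global route, the same scheme the paper uses in Proof~2 of Theorem~\ref{RST normality}: Lemma~\ref{sym homotopy}, then the local identification via Lemmas~\ref{phi conjugate of psi}, \ref{sym trans groups are conjugates} and \ref{trans sp=esp}, then evaluation at $X=1$. The local--global step you flag as the main obstacle is already available as Lemma~\ref{sym LG}. It is Lemma 5.20 in \cite{ChatRao}, proved before Theorem 5.23, so there is no circularity and your Quillen patching sketch is not needed.

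One small repair is needed. Lemmas~\ref{phi conjugate of psi} and~\ref{sym LG} require $\varphi_{\mathfrak m}$ to have Pfaffian $1$, and indeed $(1\perp\varepsilon)^\top\psi_n(1\perp\varepsilon)$ always has Pfaffian $1$. The statement as written does not assume this. Fix it by rescaling one basis vector of $P_{\mathfrak m}$ by the unit $\mathrm{Pf}(\varphi_{\mathfrak m})^{-1}$. This unit is $\equiv 1 \pmod{I}$ because $\varphi_{\mathfrak m}\equiv\psi_n \pmod{I}$, so the new Gram matrix has Pfaffian $1$ and is still $\equiv\psi_n \pmod{I}$.
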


\noindent In the following lemma, we state a local-global principle for the relative elementary transvection group, for the symplectic case.

\begin{lemma}[\cite{ChatRao}, Lemma 5.20]
\label{sym LG}
    Let $R$ be a commutative ring with $R = 2R$, and let $I$ be an ideal of $R$. Let $(P, \langle , \rangle)$ be a symplectic $R$-module with $P$ being a finitely generated projective module of rank $2n$, $n \geq 1$. Let $\alpha(X) \in \operatorname{Sp}(Q[X])$, with $\alpha(0) = Id$. Also assume that over the local ring $R_{\mathfrak{m}}$ for any maximal ideal $\mathfrak{m}$ of $R$, the alternating form $\langle , \rangle$ corresponds to the alternating matrix $\varphi_{\mathfrak{m}}$ of Pfaffian $1$ (with respect to some basis), where $\varphi_{\mathfrak{m}} \equiv \psi_n \pmod{I}$. If for each maximal ideal $\mathfrak{m}$ of $R$, $\alpha(X)_{\mathfrak{m}} \in \ETransSp(Q_{\mathfrak{m}}[X], IQ_{\mathfrak{m}}[X], \langle , \rangle_{\psi_1 \perp \varphi_{\mathfrak{m}}})$, then $\alpha(X) \in \ETransSp(Q[X], IQ[X], \langle , \rangle)$.
\end{lemma}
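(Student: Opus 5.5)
The plan is to run the Quillen-type patching argument, in the form used by Bak, Basu and Rao in \cite{BakBasuRao}, but for the relative group $\ETransSp(Q[X], IQ[X], \langle , \rangle)$. Let
\[
\mathcal{J}=\{a\in R \mid \alpha(bX)\in \ETransSp(Q[X], IQ[X], \langle , \rangle) \text{ for all } b\in (a^N) \text{ for some } N\}.
\]
I will show that $\mathcal{J}$ is an ideal of $R$ and that it lies in no maximal ideal. Then $1\in \mathcal{J}$, and taking $b=1$ gives $\alpha(X)\in \ETransSp(Q[X], IQ[X], \langle , \rangle)$.

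\emph{Step 1 (spreading out from $\mathfrak{m}$).} Fix a maximal ideal $\mathfrak{m}$. By hypothesis, $\alpha(X)_{\mathfrak{m}}$ is a finite product of conjugates of relative elementary symplectic transvections $\rho(q,\alpha)$, $\mu(q,\beta)$ with respect to $\psi_1\perp\varphi_{\mathfrak{m}}$. By Remark \ref{phi over local} and Lemma \ref{phi conjugate of psi}, there is an $a\notin\mathfrak{m}$ such that $P_a$ is free and $\varphi=(1\perp\varepsilon)^\top\psi_n(1\perp\varepsilon)$ over $R_a$, with $\varepsilon\in \En_{2n-1}(R_a,I_a)$. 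After enlarging $a$ so that it clears the finitely many denominators occurring in this product, we get $\alpha(X)_a\in \ETransSp(Q_a[X], IQ_a[X])$. By Lemma \ref{sym trans groups are conjugates}, conjugation by $I_3\perp\varepsilon$ lets us assume the form over $R_a$ is the standard one, $\psi_{n+1}$. Lemma \ref{trans sp=esp} then identifies the group over $R_a$ with $\ESp_{2n+2}(R_a[X], I_a[X])$, which is where matrix computations are available.

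\emph{Step 2 (relative dilation lemma).} This is the step I expect to be the main obstacle. The claim to prove is: if $\beta(X)\in\Sp(Q[X])$ with $\beta(0)=Id$ and $\beta(X)_a\in \ETransSp(Q_a[X], IQ_a[X])$, then for $N\gg 0$ and every $b\in(a^N)$ we have $\beta(bX)\in \ETransSp(Q[X], IQ[X])$.

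To prove it, first write $\beta(X)_a$ as a product of conjugates $g\,\rho(q,\alpha)\,g^{-1}$ and $g\,\mu(q,\beta)\,g^{-1}$, where $g\in\ETransSp(Q_a[X])$ and the inner factors are relative. Replacing $X$ by $a^NX$ then removes denominators, using the relative commutator identities among the $\rho(\cdot,\cdot)$ and $\mu(\cdot,\cdot)$. These are the module analogues of Kopeiko's formulas for $se_{ij}$. The conditions $q\in IP$ and $\alpha,\beta\in I$ must be kept track of, so that the dilated factors stay in the relative group.

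A further technical point is that $R\to R_a$ need not be injective. I would handle it with the standard device. Because $\beta(0)=Id$, the difference between $\beta(a^NX)$ and the lifted product is a polynomial automorphism that is trivial at $X=0$ and becomes trivial after inverting $a$. Its coefficients are therefore $a$-torsion, and a further dilation by a power of $a$ kills them, since $P$ is finitely generated.

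\emph{Step 3 (patching).} First, $\mathcal{J}$ is closed under multiplication by $R$ by construction. For sums, use the auxiliary automorphism $\gamma(X,Y)=\alpha(X+Y)\alpha(Y)^{-1}$ over $R[Y]$, which satisfies $\gamma(0,Y)=Id$. Apply Step 2 to $\gamma$ over the base ring $R[Y]$. If $a_1,a_2\in\mathcal{J}$ and $b=b_1+b_2$ with $b_i\in(a_i^N)$, then
\[
\alpha(bX)=\gamma(b_1X,\,b_2X)\,\alpha(b_2X),
\]
and both factors lie in $\ETransSp(Q[X],IQ[X])$. Hence $\mathcal{J}$ is an ideal. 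By Step 1 it is not contained in any maximal ideal, so $\mathcal{J}=R$.

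One more point to check is that ``$\ETransSp(Q[X],IQ[X])$'' means the same group before and after the base change $R\to R[Y]$ and the substitutions $X\mapsto bX$. This should follow because these substitutions send relative generators to relative generators.
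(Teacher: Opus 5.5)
The paper gives no proof of this lemma; it is quoted from \cite{ChatRao}. So the comparison is with the standard Quillen--Suslin patching of \cite{BakBasuRao}, which is the framework you set up. Your Steps~1 and~3 are fine. This covers spreading out from $R_{\mathfrak m}$ to some $R_a$, the ideal $\mathcal J$, and the factorization $\alpha(bX)=\gamma(b_1X,b_2X)\,\alpha(b_2X)$. (Step~2 applies to $\gamma$ over $R[Y]$ because $a_1\in\mathcal J$ gives $\alpha(X)_{a_1}\in\ETransSp(Q_{a_1}[X],IQ_{a_1}[X])$ once you undo $X\mapsto a_1^NX$ over $R_{a_1}$.) The torsion device for the non-injectivity of $R\to R_a$ is also fine. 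But all of this is formal. The entire content of the lemma is the relative dilation statement of Step~2, and you have not proved it.

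Your mechanism, ``replacing $X$ by $a^NX$ then removes denominators'', fails as stated. The substitution only rescales coefficients of positive powers of $X$. In a product of terms $g\,\rho(q,\alpha)\,g^{-1}$ with $g\in\ETransSp(Q_a[X])$, the constant terms of $g$, $q$ and $\alpha$ keep their denominators. In Suslin's absolute dilation lemma one first uses $\beta(0)=Id$ to rewrite $\beta(X)_a$ as a product of conjugates, by $X$-free elements, of generators with $X$-divisible parameters. The relative analogue of this rewriting is false, even under all the hypotheses of the lemma.

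Take $R=k[c]$ with $\mathrm{char}\,k\neq 2$, $I=cR$, and $P=R^{2n}$ with the form $\psi_n$. Then $[\rho(0,-c),\mu(0,X)]=[E_{21}(c),E_{12}(X)]\perp I_{2n}$ lies in $\ETransSp(Q[X],IQ[X])$ and is trivial at $X=0$. However, it is not even in $\En_{2n+2}(R[X],cXR[X])$. Its class in $K_1(R[X],cXR[X])$ is, up to sign, the boundary of the Dennis--Stein symbol $\langle X,c\rangle\in K_2(k[c,X]/(cX))$. That symbol does not come from $K_2(k[c,X])=K_2(k)$, because its $d\log$ is $dX\wedge dc\neq 0$ in $\Omega^2_{(k[c,X]/(cX))/k}$. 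Since $\ETransSp(Q[X])\subseteq\En_{2n+2}(R[X])$, the element is therefore not a product of $\ETransSp(Q[X])$-conjugates of relative generators with $X$-divisible parameters.

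So a relative dilation lemma must also handle mixed terms $h\,[\epsilon_0,\eta(X)]\,h^{-1}$. Here $\epsilon_0$ is an $X$-free relative generator \emph{with denominators}, and $\eta(X)$ is an absolute generator with $X$-divisible parameters. One way to handle them:
\begin{itemize}
\item write $\eta(a^{2k}X)$ as a commutator of two generators, each carrying $a^k$;
\item conjugate both factors by $\epsilon_0$, and then by $h$, inducting on the length of $h$;
\item use normality of the relative group to compare the result with $\eta(a^{2k}X)$.
\end{itemize}
In the module setting this needs explicit commutator identities for $\rho(q,\alpha)$ and $\mu(q,\beta)$ in $\mathbb{H}(R)\oplus P$. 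For example, $[\rho(q,0),\rho(q',0)]=\rho(0,2\langle q,q'\rangle)$ is what lets you split the $\alpha$-parameter. This is where the structure of $\mathbb{H}(R)\oplus P$ and the hypotheses of the lemma have to be used, and your Step~2 uses none of them. Until that lemma is supplied, you have the right framework with its key step missing.
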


\noindent Now we establish the main result of this section, first by calculations, and then by using the local-global principle (Lemma \ref{sym LG}).
\begin{theorem}
\label{RST normality}
    Let $R$ be a commutative ring with $R = 2R$, and let $I$ be an ideal of $R$. Let $(P, \langle , \rangle)$ be a symplectic $R$-module with $P$ being a finitely generated projective module of rank $2n$, $n \geq 1$.  Also assume that for any maximal ideal $\mathfrak{m}$ of $R$, over the local ring $R_{\mathfrak{m}}$, the alternating form $\langle , \rangle$ corresponds to the alternating matrix $\varphi_{\mathfrak{m}}$ of Pfaffian $1$ (with respect to some basis), where $\varphi_{\mathfrak{m}} \equiv \psi_n \pmod{I}$. Then $\operatorname{ETrans}_{\operatorname{Sp}}(Q, IQ, \langle , \rangle)$ is normal in $\operatorname{Sp}(Q)$.
\end{theorem}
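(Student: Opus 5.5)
We follow the pattern of Theorem \ref{RLT normality} and give the direct conjugation argument as the main route. First, by Lemma \ref{rst=rest} (which applies under exactly the hypotheses of the theorem), $\ETransSp(Q, IQ, \langle , \rangle) = \TransSp(Q, IQ, \langle , \rangle)$. So it suffices to show that $\TransSp(Q, IQ, \langle , \rangle)$ is normal in $\operatorname{Sp}(Q)$. Since this group is generated by relative symplectic transvections, it is enough to show that the conjugate of any single relative symplectic transvection by an isometry is again a relative symplectic transvection.

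Let $\sigma(p) = p + \langle u, p \rangle v + \langle v, p \rangle u + \alpha \langle u, p \rangle u$ with $\alpha \in I$, $u \in Q$, $v \in IQ$, $\langle u, v \rangle = 0$, and either $u$ or $v$ unimodular. Let $\gamma \in \operatorname{Sp}(Q)$. For $p \in Q$ we compute $\gamma\sigma\gamma^{-1}(p)$, using $R$-linearity of $\gamma$ and the isometry identity $\langle x, \gamma^{-1}(p)\rangle = \langle \gamma(x), p\rangle$. This gives
\[
\gamma\sigma\gamma^{-1}(p) = p + \langle \gamma(u), p\rangle \gamma(v) + \langle \gamma(v), p \rangle \gamma(u) + \alpha \langle \gamma(u), p\rangle \gamma(u).
\]
So the conjugate is the symplectic transvection attached to the data $(\gamma(u), \gamma(v), \alpha)$.

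It remains to check that this data is admissible. The orthogonality condition holds because $\langle \gamma(u), \gamma(v)\rangle = \langle u, v\rangle = 0$, and the scalar $\alpha$ still lies in $I$. Next, $\gamma(v) \in \gamma(IQ) = I\gamma(Q) = IQ$. Finally, unimodularity is preserved: if $\theta(u) = 1$ for some $\theta \in Q^*$, then $(\theta\circ\gamma^{-1})(\gamma(u)) = 1$, and the same argument works for $v$. Hence $\gamma\sigma\gamma^{-1} \in \TransSp(Q, IQ, \langle , \rangle)$, which proves normality.

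The only delicate points are bookkeeping. One must make sure the relative definition allows $u$ to be arbitrary while $v \in IQ$, which is preserved as shown. One must also make sure the identification in Lemma \ref{rst=rest} is available, and it is, precisely because of the local hypothesis on $\varphi_{\mathfrak m}$.

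As an alternative, we could mimic Proof 2 of Theorem \ref{RLT normality}. Take $\delta \in \ETransSp(Q, IQ, \langle , \rangle)$ and use Lemma \ref{sym homotopy} (applied to $Q$) to get $\delta(X)$ with $\delta(0) = Id$ and $\delta(1) = \delta$. Set $\beta(X) = \gamma\delta(X)\gamma^{-1}$, so that $\beta(0) = Id$. Localize at each maximal ideal $\mathfrak m$. By Lemma \ref{phi conjugate of psi} and Lemma \ref{sym trans groups are conjugates} we can conjugate $\varphi_{\mathfrak m}$ to $\psi_n$, and then Lemma \ref{trans sp=esp} and Lemma \ref{Kop normality} give $\beta(X)_{\mathfrak m}$ in the local relative group. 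Lemma \ref{sym LG} then yields $\beta(X) \in \ETransSp(Q[X], IQ[X], \langle , \rangle)$, and evaluating at $X = 1$ gives $\gamma\delta\gamma^{-1} \in \ETransSp(Q, IQ, \langle , \rangle)$.

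The main obstacle in this second route is transporting $\gamma_{\mathfrak m}$ through the conjugation by $I_3 \perp \varepsilon$ via Lemma \ref{sym groups are conjugates}, so that Kopeiko's normality applies. For this reason the direct computation is the primary proof.
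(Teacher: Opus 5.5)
Your proposal is correct and takes essentially the same approach as the paper. Your main argument is the paper's Proof 1: you reduce to $\TransSp(Q, IQ, \langle , \rangle)$ via Lemma \ref{rst=rest}, then observe that $\gamma\sigma\gamma^{-1}$ is the relative symplectic transvection with data $(\gamma(u),\gamma(v),\alpha)$, checking explicitly that $\gamma(IQ)=IQ$ and that unimodularity transfers via $\theta\circ\gamma^{-1}$. Your alternative is the paper's Proof 2, via Lemma \ref{sym homotopy}, conjugation by $I_3 \perp \varepsilon$, Kopeiko's normality, and Lemma \ref{sym LG}.
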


\noindent \textit{Proof 1:} Note that $\operatorname{ETrans}_{\operatorname{Sp}}(Q, IQ, \langle , \rangle)=\operatorname{Trans}_{\operatorname{Sp}}(Q, IQ, \langle , \rangle)$ by Lemma \ref{rst=rest}. So, it suffices to prove that $\operatorname{Trans}_{\operatorname{Sp}}(Q, IQ, \langle , \rangle)$ is normal in $\operatorname{Sp}(Q)$.
Let $\gamma\in\operatorname{Sp}(Q)$, and $\sigma\in\rm{Trans}_{\rm{Sp}}$$(Q,IQ,\langle,\rangle)$ be a relative symplectic transvection. By definition, we have
\begin{equation*}
	\sigma(p)=p+\langle u,p\rangle v+\langle v,p \rangle u+\alpha\langle u,p \rangle u,
\end{equation*}
where $\alpha\in I$, $u\in Q$, and $v\in IQ$ with $\langle u, v\rangle=0$, and either $u$ or $v$ is unimodular. 

\vskip 2mm
\noindent Let $p\in Q$, and $q=\gamma^{-1}(p)$. Then,
\begin{eqnarray*}
	\gamma\sigma\gamma^{-1}(p)
	&=&\gamma\sigma(q)\\
	&=&\gamma (q+\langle u,q\rangle v+\langle v,q\rangle u+\alpha\langle u,q \rangle u)\\
	&=&\gamma(q)+\langle u,q\rangle \gamma(v)+\langle v,q\rangle \gamma(u)+\alpha\langle u,q \rangle \gamma(u)\hfill \text{ [as $\gamma$ is $R$-linear]}\\
	&=&p+\langle u,\gamma^{-1}(p)\rangle \gamma(v)+\langle v,\gamma^{-1}(p)\rangle \gamma(u)+\alpha\langle u,\gamma^{-1}(p) \rangle \gamma(u).
\end{eqnarray*}
Let $u_1=\gamma(u)$, and $v_1=\gamma(v)$. Then,
\begin{eqnarray*}
    \gamma\sigma\gamma^{-1}(p)
    &=&p+\langle \gamma^{-1}(u_1),\gamma^{-1}(p)\rangle v_1+\langle \gamma^{-1}(v_1),\gamma^{-1}(p)\rangle u_1+\alpha\langle \gamma^{-1}(u_1),\gamma^{-1}(p) \rangle u_1\\
    &=&p+\langle u_1,p\rangle v_1+\langle v_1,p\rangle u_1+\alpha\langle u_1,p \rangle u_1\text{ [as $\gamma^{-1}$ is an isometry]}
\end{eqnarray*}
Note that $\alpha\in I$, $u_1\in Q$, and $v_1\in IQ$ . As $\gamma$ is an isometry, $\langle u_1,v_1\rangle=\langle \gamma(u),\gamma(v)\rangle=\langle u,v\rangle=0$. Either $u_1$ or $v_1$ is unimodular. Therefore, $\gamma\sigma\gamma^{-1}$ is a relative symplectic transvection, and hence $\operatorname{Trans}_{\operatorname{Sp}}(Q,IQ,\langle,\rangle)$ is a normal subgroup of $\operatorname{Sp}(Q)$.
 $\hfill \square$

\vskip 4mm
\noindent \textit{Proof 2:}
Let $\delta \in \operatorname{ETrans}_{\operatorname{Sp}}(Q, IQ, \langle , \rangle)$. Note that $\operatorname{ETrans}_{\operatorname{Sp}}(Q, IQ, \langle , \rangle)=\operatorname{Trans}_{\operatorname{Sp}}(Q, IQ, \langle , \rangle)$ by Lemma \ref{rst=rest}, and hence $\delta \in \operatorname{Trans}_{\operatorname{Sp}}(Q, IQ, \langle , \rangle)$. 

\vskip 2mm
\noindent By Lemma \ref{sym homotopy}, there exists $\delta(X) \in \operatorname{Trans}_{\operatorname{Sp}}(Q[X], IQ[X], \langle , \rangle_{\otimes R[X]})$ such that $\delta(1)=\delta$, and $\delta(0)=Id$. For $\gamma \in \operatorname{Sp}(Q)$, let $\gamma(X)(\sum q_jX^j)=\sum \gamma(q_j)X^j$. Note that $\gamma(X)\in\mathrm{Sp}(Q[X])$. Define $\beta(X)=\gamma(X)\delta(X)\gamma(X)^{-1}$. 

\vskip 2mm
\noindent Note that for every maximal ideal $\mathfrak{m}$ of $R$, the projective $R_\mathfrak{m}$-module $Q_\mathfrak{m}$ is free of $\text{rank} \geq 4$, over the local ring $R_\mathfrak{m}$. Since $\varphi_{\mathfrak{m}} \equiv \psi_n \pmod{I}$, by Lemma \ref{phi conjugate of psi}, and Remark \ref{phi over local}, over  the ring \(R_{\mathfrak{m}}\) we have \(\varphi_\mathfrak{m} = (1 \perp \varepsilon)^\top \psi_n (1 \perp \varepsilon)\), where \(\varepsilon \in \mathrm{E}_{2n-1}(R_a, I_a)\), for some \(a \notin \mathfrak{m}\). So, by Lemma \ref{sym trans groups are conjugates}, we get $\operatorname{Trans}_{\operatorname{Sp}}(Q_{\mathfrak{m}}[X], IQ_{\mathfrak{m}}[X], \langle , \rangle_{\psi_{1}\perp\varphi_\mathfrak{m}})=(I_{3} \perp \varepsilon)^{-1}\operatorname{Trans}_{\operatorname{Sp}}(Q_{\mathfrak{m}}[X], IQ_{\mathfrak{m}}[X], \langle , \rangle_{\psi_{n+1}})(I_{3} \perp \varepsilon)$, and by Lemma \ref{sym groups are conjugates}, we have $\operatorname{Sp}_{\psi_{1}\perp\varphi_\mathfrak{m}}(R_\mathfrak{m}[X])=(I_{3} \perp \varepsilon)^{-1}\operatorname{Sp}_{\psi_{n+1}}(R_\mathfrak{m}[X])(I_{3} \perp \varepsilon)$.

\vskip 2mm
\noindent Note that by Lemma \ref{Kop normality}, we have $\operatorname{ESp}_{2n+2}(R_{\mathfrak{m}}[X],I_{\mathfrak{m}}[X])$ is normal in $\operatorname{Sp}_{2n+2}(R_\mathfrak{m}[X])$, and by Lemma \ref{trans sp=esp}, we have $\operatorname{Trans}_{\operatorname{Sp}}(Q_{\mathfrak{m}}[X], IQ_{\mathfrak{m}}[X], \langle , \rangle_{\psi_{n+1}})=\operatorname{ESp}_{2n+2}(R_{\mathfrak{m}}[X],I_{\mathfrak{m}}[X])$. We show that for any $y'\in\operatorname{Trans}_{\operatorname{Sp}}(Q_{\mathfrak{m}}[X], IQ_{\mathfrak{m}}[X], \langle , \rangle_{\psi_{1}\perp\varphi_\mathfrak{m}})$, and $z'\in\operatorname{Sp}_{\psi_{1}\perp\varphi_\mathfrak{m}}(R_\mathfrak{m}[X])=\operatorname{Sp}(Q_\mathfrak{m}[X])$, the conjugate $z' y' (z')^{-1}$ will lie in $\operatorname{Trans}_{\operatorname{Sp}}(Q_{\mathfrak{m}}[X], IQ_{\mathfrak{m}}[X], \langle , \rangle_{\psi_{1}\perp\varphi_\mathfrak{m}})$. Let $\theta = (I_{3} \perp \varepsilon)$. Then $y'=\theta^{-1} y \theta$, and $z'=\theta^{-1} z \theta$ for some $y\in\operatorname{ESp}_{2n+2}(R_{\mathfrak{m}}[X],I_{\mathfrak{m}}[X])$, and $z\in\operatorname{Sp}_{\psi_{n+1}}(R_\mathfrak{m}[X])=\operatorname{Sp}_{2n+2}(R_\mathfrak{m}[X])$. Then
\[
\begin{aligned}
z' y' (z')^{-1} &= (\theta^{-1} z \theta)(\theta^{-1} y \theta)(\theta^{-1} z^{-1} \theta) \\
&= \theta^{-1} z y z^{-1} \theta\\
&= \theta^{-1} w \theta \text{\hphantom{fas}  for some } w\in \operatorname{Trans}_{\operatorname{Sp}}(Q_{\mathfrak{m}}[X], IQ_{\mathfrak{m}}[X], \langle , \rangle_{\psi_{n+1}}) \text{ by Lemma \ref{Kop normality}} \\
&= w' \text{\hphantom{fasslllll}  for some } w'\in \operatorname{Trans}_{\operatorname{Sp}}(Q_{\mathfrak{m}}[X], IQ_{\mathfrak{m}}[X], \langle , \rangle_{\psi_{1}\perp\varphi_\mathfrak{m}}).
\end{aligned}
\]

\noindent So, for all maximal ideals $\mathfrak{m}$ of $R$, we get that the term $\beta(X)_{\mathfrak{m}}=\gamma(X)_{\mathfrak{m}}\delta(X)_{\mathfrak{m}}\gamma^{-1}(X)_{\mathfrak{m}}$ belongs to $\operatorname{Trans}_{\operatorname{Sp}}(Q_{\mathfrak{m}}[X], IQ_{\mathfrak{m}}[X], \langle , \rangle_{\psi_{1}\perp\varphi_\mathfrak{m}})=\operatorname{ETrans}_{\operatorname{Sp}}(Q_{\mathfrak{m}}[X], IQ_{\mathfrak{m}}[X], \langle , \rangle_{\psi_{1}\perp\varphi_\mathfrak{m}})$. By Lemma \ref{sym LG}, $\beta(X)$ belongs to  $\ETransSp(Q[X], IQ[X], \langle , \rangle)$, and  $\beta(1)=\gamma\delta\gamma^{-1}$ belongs to $\operatorname{ETrans}_{\operatorname{Sp}}(Q, IQ, \langle , \rangle)$. Therefore, $\operatorname{ETrans}_{\operatorname{Sp}}(Q, IQ, \langle , \rangle)$ is a normal subgroup of $\operatorname{Sp}(Q)$.$\hfill \square$

\section{Orthogonal case}

In \cite{BakBasuRao}, Bak, Basu, and Rao proved that $\operatorname{ETrans}_{\operatorname{O}}(Q, \langle , \rangle)$ is a normal subgroup of O$(Q)$. In this section, we extend this result to a relative case with respect to an ideal $I$ of $R$ (see Theorem \ref{ROT normality}). We will show that the relative orthogonal transvection group $\operatorname{ETrans}_{\operatorname{O}}(Q, IQ, \langle , \rangle)$ is normal in O$(Q)$. We will be giving two proofs of this result, one through direct computation, and one by using local-global principle.

\begin{definition} \rm{ An \textit{orthogonal $R$-module} is a pair $(P, \langle , \rangle)$, where $P$ is a finitely generated projective $R$-module of even rank, and $\langle , \rangle : P \times P \to R$ is a non-degenerate (i.e., $P \cong P^*$ by $x \to \langle x, -\rangle$) symmetric bilinear form. This is also known as \textit{symmetric inner product space}.}\end{definition}

\vskip 2mm
\begin{definition} \rm{An \textit{isometry} of an orthogonal module $(P, \langle , \rangle)$ is an automorphism of $P$ which fixes the bilinear form. The group of isometries of $(P, \langle , \rangle)$ is denoted by $\operatorname{O}(P)$. }\end{definition}

\vskip 2mm
\begin{definition} \rm{ An \textit{orthogonal transvection} of an orthogonal module $(P, \langle , \rangle)$ (as defined by Bass in \cite{Bass3}) is an automorphism of the form
\[
\tau(p) = p - \langle u, p \rangle v + \langle v, p \rangle u,
\]
where $u, v \in P$ are isotropic, i.e., $\langle u, u \rangle = \langle v, v \rangle = 0$ with $\langle u, v \rangle = 0$, and either $u$ or $v$ is unimodular. It is easy to check that $\langle \tau(p), \tau(q) \rangle = \langle p, q \rangle$, i.e., $\tau \in \operatorname{O}(P)$, and $\tau$ has an inverse $\sigma(p) = p + \langle u, p \rangle v - \langle v, p \rangle u$.

\vskip 2mm
\noindent The subgroup of $\operatorname{O}(P)$ generated by the orthogonal transvections is called \textit{orthogonal transvection group}, and is denoted by $\operatorname{Trans_{O}}(P, \langle , \rangle)$ (see \cite{Bass3} or \cite{HahnOT}).

\vskip 2mm
\noindent The group of \textit{relative orthogonal transvections} with respect to an ideal $I$ is generated by the orthogonal transvections of the form 
\[
\sigma(p) = p - \langle u, p \rangle v + \langle v, p \rangle u 
\]
where either $u \in IP$ or $v \in IP$. The group generated by relative orthogonal transvections is denoted by $\operatorname{Trans_{O}}(P, IP, \langle , \rangle)$.
 }\end{definition}

\noindent The following remark is an orthogonal analogue of Remark \ref{P[X] is symplectic}, and gives an orthogonal module structure on $P[X]$.
\begin{remark}
    If $(P,\langle,\rangle)$ is an orthogonal $R$-module, then $(P[X],\langle,\rangle_{\otimes R[X]})$ is an orthogonal $R[X]$-module, where $\langle,\rangle_{\otimes R[X]}:P[X]\times P[X]\rightarrow P[X]$ is given by
    \[\bigg\langle\sum_{i}p_iX^i,\sum_{j}q_jX^j\bigg\rangle_{\otimes R[X]}=\sum_{i,j}\langle p_i,q_j\rangle X^{i+j}.\]
\end{remark}

\vskip 2mm
\begin{definition} \rm{ Let $(P_1, \langle, \rangle_1)$ and $(P_2, \langle, \rangle_2)$ be two orthogonal $R$-modules. Their \textit{orthogonal sum} is a pair $(P, \langle, \rangle)$, where $P = P_1 \oplus P_2$, and the inner product is defined by \[
\langle (p_1, p_2), (q_1, q_2) \rangle = \langle p_1, q_1 \rangle_1 + \langle p_2, q_2 \rangle_2.
\]}\end{definition}

\begin{remark}
There is a non-degenerate symmetric bilinear form $\langle , \rangle$ on the $R$-module $\mathbb{H}(R) = R \oplus R^*$, namely \[\langle (a_1, f_1), (a_2, f_2) \rangle = f_2(a_1) + f_1(a_2).\] With this symmetric bilinear form, $\mathbb{H}(R)$ is called \textit{hyperbolic plane}. 
\end{remark}

\noindent \textbf{Notation.} Now onwards \( Q \) will denote \( (R^2 \oplus P) \) with induced form on \( (\mathbb{H}(R) \oplus P) \), and \( Q[X] \) will denote \( (R[X]^2 \oplus P[X]) \) with induced form on \( (\mathbb{H}(R[X]) \oplus P[X]) \).

\vskip 2mm
\begin{definition} \rm{An orthogonal $R$-module $(P, \langle , \rangle)$ is called \textit{split} if there exists a submodule $N \subseteq P$ such that $N$ is a direct summand of $P$, and $N$ is
precisely equal to its orthogonal complement $N^{\perp} = \{p \in P : \langle p, n \rangle = 0 \text{ for all } n \in N\}$.
Moreover, an orthogonal module $(P, \langle , \rangle)$ over the ring $R$ is called \textit{locally split} if
$(P_{\mathfrak{m}}, \langle, \rangle)$ is a split orthogonal $R_{\mathfrak{m}}$-module for every maximal ideal $\mathfrak{m}$ of $R$.
}\end{definition}

\vskip 2mm
\begin{definition} \rm{ The automorphisms of $Q = (R^2 \oplus P)$ of the form
\[
(a, b, p) \mapsto (a, b + \langle p, q \rangle, p - aq),\]
or of the form
\[
(a, b, p) \mapsto (a + \langle p, q \rangle, b, p - bq),
\]
where $a, b \in R$, and $p, q \in P$, are called \textit{elementary orthogonal transvections}. We denote the first isometry by $\rho(q)$, and the second one by $\mu(q)$. It can be checked that the elementary orthogonal transvections are orthogonal transvections on $Q$. Consider $(u, v) = ((0, 1, 0), (0, 0, q))$ to get $\rho(q)$, and consider $(u, v) = ((1, 0, 0), (0, 0, q))$ to get $\mu(q)$.

\vskip 2mm
\noindent The subgroup of $\operatorname{Trans_{O}}(Q, \langle , \rangle)$ generated by elementary orthogonal transvections is denoted by $\operatorname{ETrans_{O}}(Q, \langle , \rangle)$.}\end{definition}

\vskip 2mm
\begin{definition} \rm{ The elementary orthogonal transvections of $Q$ of the form $\rho(q)$, $\mu(q)$, where $q \in IP$ are called \textit{relative elementary orthogonal transvections} with respect to an ideal $I$.

\vskip 2mm
\noindent The subgroup of $\operatorname{ETrans}_{\operatorname{O}}(Q, \langle , \rangle)$ generated by relative elementary orthogonal transvections is denoted by $\operatorname{ETrans}_{\operatorname{O}}(IQ, \langle , \rangle)$. The normal closure of $\operatorname{ETrans}_{\operatorname{O}}(IQ, \langle , \rangle)$ in $\operatorname{ETrans}_{\operatorname{O}}(Q, \langle , \rangle)$ is denoted by $\operatorname{ETrans}_{\operatorname{O}}(Q, IQ, \langle , \rangle)$.}\end{definition}

\vskip 2mm
\noindent The following result, proved by Suslin and Kopeiko, is a particular case of Theorem \ref{ROT normality}, in the case of a free module.

\begin{lemma}[\cite{KopSus}, Corollary 2.13]
\label{KopSus normality}
    $\mathrm{EO}_{2n}(R,I)$ is normal in $\mathrm{O}_{2n}(R)$, for $n \geq 3$.
\end{lemma}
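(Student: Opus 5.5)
The plan is to follow Suslin's strategy, as adapted by Kopeiko and Suslin to the orthogonal case. The idea is to describe $\EO_{2n}(R,I)$ by a generating set that is visibly stable under conjugation by $\mathrm{O}_{2n}(R)$, which is the matrix counterpart of Proof 1 of Theorem \ref{RLT normality}. For column vectors $v,w\in R^{2n}$ with $v^{\top}\widetilde{\psi}_n v=w^{\top}\widetilde{\psi}_n w=v^{\top}\widetilde{\psi}_n w=0$, set
\[
T(v,w)=I_{2n}+v\,w^{\top}\widetilde{\psi}_n-w\,v^{\top}\widetilde{\psi}_n ,
\]
the matrix of the orthogonal transvection $p\mapsto p+\langle w,p\rangle v-\langle v,p\rangle w$. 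A direct check gives $T(v,w)\in\mathrm{O}_{2n}(R)$ and, for every $\gamma\in\mathrm{O}_{2n}(R)$,
\[
\gamma\,T(v,w)\,\gamma^{-1}=T(\gamma v,\gamma w).
\]
This uses only $\gamma^{\top}\widetilde{\psi}_n\gamma=\widetilde{\psi}_n$, exactly as in the computation of Proof 1 of Theorem \ref{RST normality}. Let $\mathcal{T}$ be the set of all $T(v,w)$ with $v\in \mathrm{Um}_{2n}(R)$ isotropic, $w\in I^{2n}$, and the isotropy conditions above. Then $\mathcal{T}$ is stable under conjugation by $\mathrm{O}_{2n}(R)$, because $\gamma v$ is still unimodular and isotropic, $\gamma w$ is still in $I^{2n}$, and the isotropy conditions are preserved. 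The lemma therefore reduces to the equality $\EO_{2n}(R,I)=\langle\mathcal{T}\rangle$.

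First I prove $\EO_{2n}(R,I)\subseteq\langle\mathcal{T}\rangle$. Each $oe_{ij}(x)$ with $x\in I$ equals $T(e_i,\pm x e_{\sigma(j)})$ up to sign conventions, so it lies in $\mathcal{T}$. The conjugates $oe_{kl}(a)\,oe_{ij}(x)\,oe_{kl}(-a)$ then lie in $\mathcal{T}$ by the conjugation formula, and these conjugates generate $\EO_{2n}(R,I)$.

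The substantial step is the reverse inclusion: every $T(v,w)\in\mathcal{T}$ lies in $\EO_{2n}(R,I)$, and this is where $n\geq 3$ is needed. I would first treat the case where $v$ is a standard basis vector, say $v=e_1$. Isotropy and $\langle e_1,w\rangle=0$ then force $w_{2}=0$ (with $\sigma(1)=2$). In that case $T(e_1,w)$ splits as a product of $oe_{1j}(\pm w_{\sigma(j)})$ over $j\neq 1,2$. The diagonal term $w_1$ needs care, but isotropy of $w$ expresses $w_1w_2$-type quantities through the other coordinates, so it does not obstruct the factorization. All entries lie in $I$, so the product lies in $\EO_{2n}(I)$.

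For general unimodular $v$, choose $u$ with $u^{\top}\widetilde{\psi}_n v=1$. Following Suslin, I would rewrite $w$ as a finite sum $w=\sum_{k} w^{(k)}$ with each $w^{(k)}\in I^{2n}$. Each $w^{(k)}$ should be of the form $v_ie_j'-v_je_i'$ times an element of $I$, suitably adapted to the form $\widetilde{\psi}_n$, so that each $w^{(k)}$ is isotropic and orthogonal to $v$. Here one uses $w=\sum_{i,j}(\ldots)(v_iu_j-v_ju_i)$-type identities coming from $\langle u,v\rangle=1$ and $\langle v,w\rangle=0$. Then $T(v,w)=\prod_k T(v,w^{(k)})$ modulo checking that the cross terms vanish. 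Each $T(v,w^{(k)})$ involves only a pair of hyperbolic coordinates of $v$. By $n\geq 3$, there is a third hyperbolic plane untouched by $w^{(k)}$. Using this free plane, the factor can be realised as a commutator of a relative elementary orthogonal matrix with an absolute elementary orthogonal matrix, or as a conjugate of a standard $T(e_m,\cdot)$ by an element of $\EO_{2n}(R)$, as in Suslin's proof of Lemma \ref{En normality}. This places it in $\EO_{2n}(R,I)$ via Vaserstein's description of the relative group as a normal closure.

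I expect this decomposition to be the main obstacle. The difficulty is the bookkeeping that makes each piece simultaneously isotropic, orthogonal to $v$, and expressible through only two coordinate planes while keeping all entries in $I$. If the direct decomposition becomes unwieldy, I would instead first prove the absolute statement $\EO_{2n}(R)\trianglelefteq\mathrm{O}_{2n}(R)$. I would then obtain the relative one by the standard double trick: apply the absolute result over $R\oplus I$ (the double of $R$ along $I$) and project to the second coordinate. This reduces everything to the case $I=R$.
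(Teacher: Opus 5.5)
The paper gives no proof of this lemma; it simply quotes it from Suslin--Kopeiko. Your framework is the right one, and the easy half is correct. The identities $\gamma T(v,w)\gamma^{-1}=T(\gamma v,\gamma w)$ and $oe_{ij}(x)=T(e_i,xe_{\sigma(j)})$ give $\EO_{2n}(R,I)\subseteq\langle\mathcal T\rangle$. For $v=e_1$ one gets exactly $T(e_1,w)=\prod_{j\ge 3}oe_{1j}(w_{\sigma(j)})\cdot(I_{2n}+q(w)e_{12})$, where $q(x)=\sum_i x_{2i-1}x_{2i}$. This formula shows that \emph{isotropic} must mean $q(v)=q(w)=0$. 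The condition $v^{\top}\widetilde\psi_n v=0$ only says $2q(v)=0$, and when $2$ is a zero divisor your $\mathcal T$ can contain elements that do not preserve $q$, hence are not in $\EO_{2n}(R)$; this is harmless if $2\in R^{\times}$. The genuine gap is the reverse inclusion, namely $T(v,w)\in\EO_{2n}(R,I)$ for an arbitrary unimodular isotropic $v$. That is the entire content of the theorem, and your sketch does not establish it.

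Concretely, the decomposition you allude to is $w=\sum_{i<k}z_{ik}$ with $z_{ik}=a_{ik}(v_ie_{\sigma(k)}-v_ke_{\sigma(i)})$ and $a_{ik}=u_{\sigma(i)}w_{\sigma(k)}-u_{\sigma(k)}w_{\sigma(i)}\in I$. Every $z_{ik}$ is orthogonal to $v$. However, the summands with $k=\sigma(i)$ are $a_{ik}(v_ie_i-v_{\sigma(i)}e_{\sigma(i)})$, with $q$-value $-a_{ik}^2v_iv_{\sigma(i)}\neq 0$ in general. So the pieces are not all isotropic, and $T(v,\cdot)$ of such a piece is not an isometry.

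The right device is the Eichler--Siegel--Dickson map $\widehat T(v,z)=T(v,z)-q(z)\,vv^{\top}\widetilde\psi_n$. It is defined for every $z\perp v$ and is exactly additive, $\widehat T(v,z_1)\widehat T(v,z_2)=\widehat T(v,z_1+z_2)$, so $T(v,w)=\prod_{i<k}\widehat T(v,z_{ik})$ with no cross terms. What is then missing is a proof that each $\widehat T(v,z_{ik})$ lies in $\EO_{2n}(R,I)$, and neither of your mechanisms supplies one:
\begin{itemize}
\item Conjugating a standard $T(e_m,\cdot)$ presupposes that $\EO_{2n}(R)$ acts transitively on unimodular isotropic vectors. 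That is unavailable over an arbitrary ring, and if it held the decomposition would be unnecessary.
\item Suslin's device of splitting off the coordinates of $v$ outside the support of the piece does not transfer verbatim, because the two parts of $v$ need not be isotropic.
\end{itemize}

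The step can be carried out. For $k\neq\sigma(i)$, write $T(v,z)=T(z,-v)$ and peel off the coordinates $v_p$, $p\notin\{i,k\}$, as relative $oe$'s. What remains is $ghg^{-1}h^{-1}$ with $g=oe_{is}(v_i)\,oe_{ks}(v_k)$, $h=oe_{si}(-a_{ik}v_k)\,oe_{sk}(a_{ik}v_i)$, and $s$ in a third hyperbolic plane. The non-isotropic same-plane pieces must first be split into two isotropic vectors orthogonal to $v$, for example $a_{ik}\sum_j v_{2j-1}e_{2j-1}$ and the remainder. But this is the actual proof, and it is where $n\ge 3$ enters.

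Finally, your fallback through the double $D=R\oplus I$ is a correct reduction of the relative statement to normality of $\EO_{2n}(D)$ in $\mathrm{O}_{2n}(D)$. That absolute statement, however, requires the same missing lemma with $I=R$. So the fallback completes the argument only if you cite the absolute Suslin--Kopeiko theorem.
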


\begin{remark}[\cite{Chat}, Remark 4.10]
Let \(P = \oplus_{i=1}^{2n} Re_i\) be a free \(R\)-module. The non-degenerate symmetric bilinear form \(\langle , \rangle\) on \(P\) corresponds to a symmetric matrix \(\widetilde{\varphi}\) with respect to the basis \(\{e_1, e_2, \ldots, e_{2n}\}\) of \(P\), and we write \(\langle p, q \rangle = p^\top \widetilde{\varphi} q\). In this case the orthogonal transvection \(\tau(p) = p - \langle u, p \rangle v + \langle v, p \rangle u \) corresponds to the matrix \((I_{2n} -  vu^\top \widetilde{\varphi} +  uv^\top \widetilde{\varphi} )\), and the group generated by them is denoted by \(\operatorname{Trans_{O}}(P, \langle , \rangle_{\widetilde{\varphi}})\). Also in this case \(\operatorname{ETrans_{O}}(Q, \langle , \rangle_{\widetilde{\psi_1} \perp \widetilde{\varphi}})\) will be generated by matrices of the form \(\rho_{\widetilde{\varphi}}(q) = \begin{pmatrix} 1 & 0 & 0 \\ 0 & 1 & q^\top\widetilde{\varphi} \\ -q & 0 & I_{2n} \end{pmatrix}\), and 
\(\mu_{\widetilde{\varphi}}(q) = \begin{pmatrix} 1 & 0 & q^\top \widetilde{\varphi} \\ 0 & 1 & 0 \\ 0 & -q & I_{2n} \end{pmatrix}\).
\end{remark}

\noindent The following result shows that the orthogonal transvection groups are a generalization of the elementary orthogonal group.

\begin{lemma}[\cite{Chat},  Lemma 4.11, 4.12]
\label{reot=eo}
    Let $R$ be a commutative ring with $R = 2R$, and let $I$ be an ideal of $R$. 
Let $P$ be a free $R$-module of rank $2n$, $n \geq 1$. Keeping the notation of the previous remark, if $\widetilde{\varphi} = \widetilde{\psi}_{n}$, the standard symmetric matrix, then
\[
\ETransO(Q, IQ, \langle,\rangle_{\widetilde{\psi}_{n+1}}) = \TransO(Q, IQ, \langle,\rangle_{\widetilde{\psi}_{n+1}}) = \EO_{2n+2}(R, I).
\]
\end{lemma}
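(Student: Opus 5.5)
The plan is to prove the three equalities in two steps. First show $\ETransO(Q, IQ, \langle,\rangle_{\widetilde{\psi}_{n+1}}) = \EO_{2n+2}(R,I)$ by matching generators. Then show $\TransO(Q, IQ, \langle,\rangle_{\widetilde{\psi}_{n+1}})$ is sandwiched between these two groups. This mirrors the symplectic argument behind Lemma \ref{trans sp=esp}.

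\textbf{Step 1: identifying the generators.} With $P=R^{2n}$ and $\widetilde{\varphi}=\widetilde{\psi}_n$, the matrices $\rho_{\widetilde{\psi}_n}(q)$ and $\mu_{\widetilde{\psi}_n}(q)$ from the preceding remark are computed directly.
\begin{itemize}
\item For $q=\sum q_k e_k$, each of them factors as a product of elementary orthogonal matrices $oe_{i1}(\pm q_k)$, respectively $oe_{i2}(\pm q_k)$, with $i\geq 3$. These factors commute, since the relevant indices never interact.
\item Conversely, every $oe_{ij}(z)$ with $\{i,j\}\cap\{1,2\}\neq\emptyset$ is $\rho(ze_k)$ or $\mu(ze_k)$ for a suitable basis vector $e_k$.
\end{itemize}
Hence $\ETransO(Q,IQ)$ contains all $oe_{ij}(x)$ with $x\in I$ that involve the first hyperbolic plane.

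Two facts about the absolute groups are needed next.
\begin{itemize}
\item $\ETransO(Q)=\EO_{2n+2}(R)$. This uses the commutator formula $[oe_{i1}(a),oe_{1j}(b)]=oe_{ij}(\pm ab)$ for indices with $i\neq \sigma(j)$. It produces the remaining generators $oe_{ij}$ with $i,j\geq 3$.
\item The relative analogue. Here $\EO_{2n+2}(R,I)$ is the normal closure of $\EO_{2n+2}(I)$. A generator $oe_{ij}(x)$ with $i,j\geq3$ and $x\in I$ is a commutator $[oe_{i1}(x),oe_{1j}(1)]$. This lies in the normal closure of $\ETransO(IQ)$ inside $\ETransO(Q)$.
\end{itemize}
Together these give $\EO_{2n+2}(R,I)\subseteq \ETransO(Q,IQ)$. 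The reverse inclusion follows because the generators of $\ETransO(IQ)$ lie in $\EO_{2n+2}(I)$ and we conjugate by $\ETransO(Q)=\EO_{2n+2}(R)$. The index bookkeeping for $\sigma$ and the signs in $oe_{ij}$ must be checked carefully, and the rank condition (enough indices beyond $1,2$) is used here.

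\textbf{Step 2: the transvection group.} The inclusion $\ETransO(Q,IQ)\subseteq\TransO(Q,IQ)$ holds for two reasons:
\begin{itemize}
\item relative elementary transvections are relative transvections (take $u=(0,1,0)$ or $(1,0,0)$ and $v=(0,0,q)$ with $q\in IP$);
\item $\TransO(Q,IQ)$ is stable under conjugation by isometries, by the same computation as Proof 1 of Theorem \ref{RST normality}.
\end{itemize}
For the converse, take a relative orthogonal transvection $\tau=I-vu^\top\widetilde{\psi}+uv^\top\widetilde{\psi}$ with, say, $u$ unimodular and $v\in IQ$, the other case being symmetric. We must show $\tau\in\EO_{2n+2}(R,I)$.

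\textbf{Main obstacle.} This converse inclusion is the hard step. The approach I would try first follows the free-module argument of \cite{Chat}:
\begin{enumerate}
\item Use the transitivity of $\EO_{2n+2}(R)$ on isotropic unimodular vectors to move $u$ to $e_1$ by some $\varepsilon\in\EO_{2n+2}(R)$. This requires the hypothesis $R=2R$ and enough rank; if the rank is too small, a stabilization argument is needed instead.
\item Then $\varepsilon\tau\varepsilon^{-1}$ is a transvection with $u=e_1$ and $v'=\varepsilon v\in IQ$. The condition $\langle e_1,v'\rangle=0$ forces the second coordinate of $v'$ to vanish.
\item The resulting matrix is a product of $\mu$-type elementary transvections with parameters in $I$, up to a correction term involving the first coordinate of $v'$. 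That term cancels because the coefficient of $e_1e_1^\top$ in the expression $-v'e_1^\top\widetilde{\psi}+e_1v'^\top\widetilde{\psi}$ is skew.
\item Conjugating back by $\varepsilon^{-1}$ keeps us in $\EO_{2n+2}(R,I)$, by Step 1.
\end{enumerate}
If the transitivity in item 1 is unavailable in low rank, I would instead factor $\tau$ locally and patch using a Quillen-style local-global argument. I expect the cited source handles this directly by explicit factorization.
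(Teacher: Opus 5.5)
Your Step 1 and the inclusion $\ETransO(Q,IQ)\subseteq\TransO(Q,IQ)$ are fine, up to two small slips. First, the factors $oe_{k+2,1}(-q_k)$ do interact: each pair $k,\sigma(k)$ contributes $-q_kq_{\sigma(k)}$ to the $(2,1)$ entry. So $\rho(q)=\prod_k oe_{k+2,1}(-q_k)$ holds precisely because $q$ is isotropic and $2$ is a unit. Second, in item 3 the entry that cancels is the $(1,2)$ entry.

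\textbf{The gap is item 1 of the converse.} $\EO_{2n+2}(R)$ does not act transitively on isotropic unimodular vectors over an arbitrary commutative ring, in any rank, even with $2$ invertible. The obstruction comes from the ring (its dimension), not from the rank of $Q$. So neither enough rank nor stabilization rescues it, and the rank is fixed by the statement anyway. Concretely, take $R=\mathbb{R}[x_1,\dots,x_{n+1}]/(x_1^2+\dots+x_{n+1}^2-1)$. The vector $u=(x_1,0,x_2,0,\dots,x_{n+1},0)^\top$ is isotropic and unimodular, but it is not in the $\EO_{2n+2}(R)$-orbit of $e_1$. To see this, evaluate at real points and scale the parameters of the elementary factors from $1$ to $0$. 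This connects any element of $\EO_{2n+2}(R)$ to the identity through continuous families of isometries over $S^n$. Orbit membership would therefore make the induced map $S^n\to\{\text{nonzero isotropic vectors in }\mathbb{R}^{2n+2}\}\simeq S^n\times S^n$ null-homotopic. But that map is, up to a constant factor, the diagonal. So your items 2--4, which are correct computations, only apply where transitivity is available, e.g.\ over local rings.

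What is missing is a way to avoid global transitivity. There are two options.

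One is a Suslin--Kopeiko type argument. Use the unimodularity of $u$ to decompose $v$ into Koszul-type pieces orthogonal to $u$, working with Eichler--Siegel--Dickson transformations, which are additive in the second vector. Then factor each piece explicitly into $\EO_{2n+2}(R,I)$.

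The other is a correctly set up version of your fallback. As in Lemma \ref{orth homotopy}, keep $u$ fixed and replace $v$ by $vX$. Then transitivity is needed only for the constant vector $u$ over the local ring $R_{\mathfrak{m}}$, where it does hold. Your items 2--4 over $R_{\mathfrak{m}}[X]$ give $\tau(X)_{\mathfrak{m}}\in\EO_{2n+2}(R_{\mathfrak{m}}[X],I_{\mathfrak{m}}[X])$. A relative local-global principle for $\EO_{2n+2}$ then finishes, followed by setting $X=1$; this is Lemma \ref{orth LG} combined with your Step 1 over $R[X]$ and $R_{\mathfrak{m}}[X]$.

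Both routes need $2n+2\geq 6$, i.e.\ $n\geq 2$. For $n=1$ no argument can work over arbitrary $R$. By the conjugation formula in Proof 1 of Theorem \ref{ROT normality}, $\TransO(Q)$ is always normal in $\mathrm{O}(Q)$. On the other hand, under $\mathbb{H}(R)^2\cong(M_2(R),\det)$ the group $\EO_4(R)$ is the image of $\mathrm{E}_2(R)\times\mathrm{E}_2(R)$. This image is not normal in $\mathrm{O}_4(R)$ whenever $\mathrm{E}_2(R)$ fails to be normal in $\mathrm{SL}_2(R)$, which happens in general. For such $R$, taking $I=R$, the claimed equality fails.
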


\noindent We establish a relation between two orthogonal groups, defined with respect to symmetric matrices which are conjugates of each other.

\begin{lemma}
\label{orth conj}
    Let $\widetilde{\varphi}$ and $\widetilde{\varphi}^*$ be two invertible symmetric matrices such that 
$\widetilde{\varphi} = \varepsilon^\top \widetilde{\varphi}^*  \varepsilon$, for some 
$\varepsilon \in \operatorname{GL}_{2n}(R)$. Then we have
\[
\operatorname{O}_{\widetilde{\varphi}}(R) = \varepsilon^{-1} \operatorname{O}_{\widetilde{\varphi}^*} (R) \varepsilon.
\]
\end{lemma}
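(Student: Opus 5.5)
The plan is to prove the two inclusions directly from the defining equation $M^\top \widetilde{\varphi} M = \widetilde{\varphi}$, substituting $\widetilde{\varphi} = \varepsilon^\top \widetilde{\varphi}^* \varepsilon$. This is the same computation that underlies Lemma \ref{sym groups are conjugates} in the symplectic case. Nothing about symmetry of the matrices is needed; only invertibility of $\varepsilon$ matters.

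\textbf{First inclusion.} Take $M \in \operatorname{O}_{\widetilde{\varphi}}(R)$ and set $N = \varepsilon M \varepsilon^{-1}$, so that $N \in \operatorname{GL}_{2n}(R)$. We compute
\[
N^\top \widetilde{\varphi}^* N = (\varepsilon^{-1})^\top M^\top \varepsilon^\top \widetilde{\varphi}^* \varepsilon M \varepsilon^{-1} = (\varepsilon^{-1})^\top M^\top \widetilde{\varphi} M \varepsilon^{-1} = (\varepsilon^{-1})^\top \widetilde{\varphi}\, \varepsilon^{-1}.
\]
Substituting $\widetilde{\varphi} = \varepsilon^\top \widetilde{\varphi}^* \varepsilon$ once more gives $\widetilde{\varphi}^*$. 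Hence $N \in \operatorname{O}_{\widetilde{\varphi}^*}(R)$, and $M = \varepsilon^{-1} N \varepsilon$ lies in $\varepsilon^{-1}\operatorname{O}_{\widetilde{\varphi}^*}(R)\varepsilon$.

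\textbf{Reverse inclusion.} Take $N \in \operatorname{O}_{\widetilde{\varphi}^*}(R)$ and set $M = \varepsilon^{-1} N \varepsilon$. Then
\[
M^\top \widetilde{\varphi} M = \varepsilon^\top N^\top (\varepsilon^{-1})^\top \varepsilon^\top \widetilde{\varphi}^* \varepsilon \varepsilon^{-1} N \varepsilon = \varepsilon^\top N^\top \widetilde{\varphi}^* N \varepsilon = \varepsilon^\top \widetilde{\varphi}^* \varepsilon = \widetilde{\varphi}.
\]
So $M \in \operatorname{O}_{\widetilde{\varphi}}(R)$.

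There is no real obstacle here. The only care needed is to keep track of the transposes, using $(\varepsilon^{-1})^\top = (\varepsilon^\top)^{-1}$, and to note that conjugation by $\varepsilon$ preserves invertibility. This guarantees that both $M$ and $N$ lie in $\operatorname{GL}_{2n}(R)$, which is part of the definition of each orthogonal group.
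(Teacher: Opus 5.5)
Your proposal is correct and follows essentially the same route as the paper: both inclusions are verified by direct substitution of $\widetilde{\varphi} = \varepsilon^\top \widetilde{\varphi}^* \varepsilon$ into the defining equation. The only difference is that you write out both computations, whereas the paper writes one and obtains the other by swapping $\widetilde{\varphi}$ with $\widetilde{\varphi}^*$ and replacing $\varepsilon$ by $\varepsilon^{-1}$.
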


\noindent \textit{Proof :} Let $\eta \in \operatorname{O}_{\widetilde{\varphi}^*}(R)$. By definition $\eta^\top \widetilde{\varphi}^* \eta = \widetilde{\varphi}^*$. Note that
\[
\begin{aligned}
\left(\varepsilon^{-1} \eta \varepsilon \right)^\top \widetilde{\varphi} \left( \varepsilon^{-1} \eta \varepsilon \right) 
&= \varepsilon^\top \eta^\top (\varepsilon^{-1})^\top \widetilde{\varphi} \varepsilon^{-1} \eta \varepsilon \\
&= \varepsilon^\top \eta^\top \big((\varepsilon^\top)^{-1} \widetilde{\varphi} \varepsilon^{-1}\big) \eta \varepsilon \\
&= \varepsilon^\top \eta^\top \widetilde{\varphi}^* \eta \varepsilon \text{\hspace{0.5cm} [ as }\widetilde{\varphi}^*=(\varepsilon^\top)^{-1}\widetilde{\varphi}\varepsilon^{-1}\text{]}\\
&= \varepsilon^\top \widetilde{\varphi}^* \varepsilon \text{\hspace{0.5cm} [ as }\eta\in\mathrm{O}_{\widetilde{\varphi}^*}(R)\text{]}\\
&= \widetilde{\varphi}.
\end{aligned}
\]
Thus, $\varepsilon^{-1} \eta \varepsilon \in \operatorname{O}_{\widetilde{\varphi}}(R)$ for every $\eta \in \operatorname{O}_{\widetilde{\varphi}^*}(R)$. Hence,
\[
\varepsilon^{-1} \operatorname{O}_{\widetilde{\varphi}^*} (R) \varepsilon \subseteq \operatorname{O}_{\widetilde{\varphi}}(R).
\]

\noindent
For the reverse inclusion, let $\xi \in \operatorname{O}_{\widetilde{\varphi}}(R)$. Then by definition $\xi^\top \widetilde{\varphi} \xi = \widetilde{\varphi}$. From $\widetilde{\varphi} = \varepsilon^\top \widetilde{\varphi}^* \varepsilon$ we obtain $\widetilde{\varphi}^* = (\varepsilon^{-1})^\top \widetilde{\varphi} \varepsilon^{-1}$. Applying the same argument as above with $\widetilde{\varphi}$ and $\widetilde{\varphi}^*$ swapped, and $\varepsilon$ replaced by $\varepsilon^{-1}$, we get $\varepsilon \xi \varepsilon^{-1} \in \operatorname{O}_{\widetilde{\varphi}^*}(R)$, i.e., $\xi \in \varepsilon^{-1} \operatorname{O}_{\widetilde{\varphi}^*} (R) \varepsilon$. Therefore,
\(
\operatorname{O}_{\widetilde{\varphi}}(R) \subseteq \varepsilon^{-1} \operatorname{O}_{\widetilde{\varphi}^*} (R) \varepsilon.
\)
\qed

\begin{lemma}[\cite{Chat}, Lemma 4.14]
\label{reot conj}
Let \(I\) be an ideal of \(R\), and \(P\) be a free \(R\)-module of rank \(2n\). Let
\((P, \langle , \rangle_{\widetilde{\varphi}})\), and \((P, \langle , \rangle_{\widetilde{\varphi}^*})\) be two orthogonal \(R\)-modules with \(\widetilde{\varphi} = \varepsilon^\top \widetilde{\varphi}^* \varepsilon\), for some \(\varepsilon \in \operatorname{GL}_{2n}(R)\). Then
\[
\operatorname{Trans_{O}}(P, IP, \langle , \rangle_{\widetilde{\varphi}}) = \varepsilon^{-1} \operatorname{Trans_{O}}(P, IP, \langle , \rangle_{\widetilde{\varphi}^*}) \varepsilon,
\]  
\[
\operatorname{ETrans_{O}}(Q, IQ, \langle , \rangle_{\widetilde{\psi}_{1} \perp \widetilde{\varphi}}) = (I_2 \perp \varepsilon)^{-1} \operatorname{ETrans_{O}}(Q, IQ, \langle , \rangle_{\widetilde{\psi}_{1} \perp \widetilde{\varphi}^*}) (I_2 \perp \varepsilon).
\]
\end{lemma}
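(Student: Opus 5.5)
The plan is to verify the two equalities of groups directly from the definitions, transporting generators along the change of basis given by $\varepsilon$. Write $\langle p,q\rangle_{\widetilde{\varphi}} = p^\top\widetilde{\varphi}q$. The identity $\widetilde{\varphi} = \varepsilon^\top\widetilde{\varphi}^*\varepsilon$ says precisely that
\[
\langle p,q\rangle_{\widetilde{\varphi}} = \langle \varepsilon p,\varepsilon q\rangle_{\widetilde{\varphi}^*},
\]
so $\varepsilon\colon (P,\langle,\rangle_{\widetilde{\varphi}})\to(P,\langle,\rangle_{\widetilde{\varphi}^*})$ is an isometry of orthogonal modules. Both equalities then reduce to showing that conjugation by an isometry carries (relative) orthogonal transvections to (relative) orthogonal transvections. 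The argument is the same kind of computation as in Proof 1 of Theorem \ref{RST normality} and in Lemma \ref{orth conj}.

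For the first equality, take a relative orthogonal transvection $\tau(p)=p-\langle u,p\rangle_{\widetilde{\varphi}^*}v+\langle v,p\rangle_{\widetilde{\varphi}^*}u$ of $(P,\langle,\rangle_{\widetilde{\varphi}^*})$. Here $u,v$ are isotropic, $\langle u,v\rangle_{\widetilde{\varphi}^*}=0$, one of them is unimodular, and one lies in $IP$. Set $u'=\varepsilon^{-1}u$ and $v'=\varepsilon^{-1}v$. Then
\[
\varepsilon^{-1}\tau\varepsilon(p)=p-\langle u',p\rangle_{\widetilde{\varphi}}v'+\langle v',p\rangle_{\widetilde{\varphi}}u'.
\]
Since $\varepsilon^{-1}$ is an isometry, $u'$ and $v'$ are again isotropic and orthogonal. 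Since $\varepsilon^{-1}$ is an automorphism of $P$, it preserves unimodularity and maps $IP$ into $IP$. Hence $\varepsilon^{-1}\tau\varepsilon$ is a relative orthogonal transvection for $\widetilde{\varphi}$, which gives $\subseteq$. The reverse inclusion follows by the symmetric argument with $\varepsilon^{-1}$, using $\widetilde{\varphi}^*=(\varepsilon^{-1})^\top\widetilde{\varphi}\varepsilon^{-1}$.

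For the second equality, let $\theta=I_2\perp\varepsilon$. Then $\theta^\top(\widetilde{\psi}_1\perp\widetilde{\varphi}^*)\theta=\widetilde{\psi}_1\perp\widetilde{\varphi}$, so $\theta$ is an isometry from $Q$ with form $\widetilde{\psi}_1\perp\widetilde{\varphi}$ to $Q$ with form $\widetilde{\psi}_1\perp\widetilde{\varphi}^*$. The next step is to conjugate the elementary generators using the matrix description of the preceding remark. Computing block-wise gives
\[
\theta^{-1}\rho_{\widetilde{\varphi}^*}(q)\theta=\rho_{\widetilde{\varphi}}(\varepsilon^{-1}q),
\qquad
\theta^{-1}\mu_{\widetilde{\varphi}^*}(q)\theta=\mu_{\widetilde{\varphi}}(\varepsilon^{-1}q).
\]
This uses $q^\top\widetilde{\varphi}^*\varepsilon=(\varepsilon^{-1}q)^\top\varepsilon^\top\widetilde{\varphi}^*\varepsilon=(\varepsilon^{-1}q)^\top\widetilde{\varphi}$. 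The map $q\mapsto\varepsilon^{-1}q$ is a bijection of $P$ that preserves $IP$. Therefore conjugation by $\theta$ maps $\ETransO(Q,\langle,\rangle_{\widetilde{\psi}_1\perp\widetilde{\varphi}^*})$ onto $\ETransO(Q,\langle,\rangle_{\widetilde{\psi}_1\perp\widetilde{\varphi}})$. It also maps the relative generators, and hence $\ETransO(IQ,\cdot)$, onto their counterparts. Since conjugation by $\theta$ is a group isomorphism between these ambient groups, it carries the normal closure of one subgroup onto the normal closure of its image. This yields the stated equality for $\ETransO(Q,IQ,\cdot)$.

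The main obstacle, mild as it is, is the block computation for $\rho$ and $\mu$. One must check the placement of $\varepsilon$ versus $\varepsilon^{-1}$, including the lower-left block $-\varepsilon^{-1}q$ and the middle row $q^\top\widetilde{\varphi}^*\varepsilon$. The normal-closure step also needs care: it must be argued via the isomorphism rather than generator by generator.
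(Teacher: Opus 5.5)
Your proof is correct. Conjugation by the isometry $\varepsilon$ (resp.\ $I_2\perp\varepsilon$) sends (relative) orthogonal transvections to (relative) orthogonal transvections, and sends the generators $\rho_{\widetilde{\varphi}^*}(q)$, $\mu_{\widetilde{\varphi}^*}(q)$ to $\rho_{\widetilde{\varphi}}(\varepsilon^{-1}q)$, $\mu_{\widetilde{\varphi}}(\varepsilon^{-1}q)$. Your block computation checks out, and you handle the normal closure correctly through the induced group isomorphism. The paper gives no proof of this lemma; it is quoted from \cite{Chat}. Your argument is the same direct conjugation computation the paper uses for Lemma \ref{orth conj} and in Proof 1 of Theorem \ref{ROT normality}.
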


\noindent The following lemma gives an equivalent condition for an orthogonal $R$-module to be split, when every finitely generated projective $R$-module is free.

\begin{lemma}[\cite{Chat}, Lemma 4.18]
\label{split}
Let \(R\) be a ring such that every finitely generated projective module over \(R\) is free. Then an inner product space over \(R\) is split if and only if it possesses a basis so that the associated inner product matrix has the form \(\bigl(\begin{smallmatrix} 0 & I \\ I & A \end{smallmatrix}\bigr)\). If we also assume that 2 is a unit in the ring, then every split inner product space has matrix $\bigl(\begin{smallmatrix} 0 & I \\ I & 0 \end{smallmatrix}\bigr)$ with respect to a suitable basis.
\end{lemma}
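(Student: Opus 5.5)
The plan is to prove the two implications of the first assertion separately, and then handle the case $2\in R^{*}$ by a change of basis. Throughout, $(P,\langle , \rangle)$ is an inner product space over $R$, i.e. $P$ is finitely generated projective and the symmetric form is non-degenerate. By hypothesis every finitely generated projective module over $R$ is free.

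\textbf{The easy direction.} Suppose $P$ has a basis $e_1,\dots,e_n,f_1,\dots,f_n$ whose inner product matrix is $\bigl(\begin{smallmatrix} 0 & I \\ I & A \end{smallmatrix}\bigr)$. Let $N=\sum_i Re_i$. Then $N$ is a direct summand of $P$, with complement $\sum_i Rf_i$. Take $p=\sum_i a_ie_i+\sum_i b_if_i$. Since $\langle e_i,e_j\rangle=0$ and $\langle f_i,e_j\rangle=\delta_{ij}$, we get $\langle p,e_j\rangle=b_j$. Hence $p\in N^{\perp}$ exactly when every $b_j=0$, that is, $N^{\perp}=N$. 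So $P$ is split.

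\textbf{The converse.} Let $N$ be a direct summand of $P$ with $N=N^{\perp}$. Then $N$ is finitely generated projective, hence free; fix a basis $e_1,\dots,e_n$ of $N$. Consider the map
\[
\Phi:P\to N^{*},\qquad p\mapsto \langle p,-\rangle|_{N}.
\]
It is the composite of the isomorphism $P\to P^{*}$ given by non-degeneracy with the restriction $P^{*}\to N^{*}$. The restriction is surjective because $N$ is a direct summand, so $\Phi$ is surjective. Its kernel is $N^{\perp}=N$, so $\Phi$ induces an isomorphism $P/N\cong N^{*}$.

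Next, use surjectivity of $\Phi$ to choose $f_1,\dots,f_n\in P$ with $\langle e_i,f_j\rangle=\delta_{ij}$, so that $\Phi(f_j)$ is the dual basis of $N^{*}$. The images of the $f_j$ in $P/N\cong N^{*}$ then form a basis. Together with the basis $e_i$ of $N$, a standard splitting argument for $0\to N\to P\to P/N\to 0$ shows that $e_1,\dots,e_n,f_1,\dots,f_n$ is a basis of $P$.

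In this basis, $\langle e_i,e_j\rangle=0$ since $N\subseteq N^{\perp}$, the off-diagonal blocks are $I$, and $A=(\langle f_i,f_j\rangle)$ is symmetric. This gives the required matrix.

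\textbf{The case $2\in R^{*}$.} Write $A=(a_{ij})$ and replace each $f_j$ by
\[
f_j'=f_j-\tfrac12\sum_i a_{ij}e_i .
\]
The transition matrix is unitriangular, so this is still a basis, and $\langle e_i,f_j'\rangle=\delta_{ij}$ is unchanged. Using $\langle e_k,e_l\rangle=0$, one computes $\langle f_i',f_j'\rangle=a_{ij}-\tfrac12 a_{ji}-\tfrac12 a_{ij}$. This equals $0$ by symmetry of $A$, so the matrix becomes $\bigl(\begin{smallmatrix} 0 & I \\ I & 0 \end{smallmatrix}\bigr)$.

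\textbf{Main obstacle.} The step needing the most care is the converse direction. There one must establish $P/N\cong N^{*}$, using that $P^{*}\to N^{*}$ is onto and that the kernel of $\Phi$ is exactly $N^{\perp}$. One must also confirm that the lifted vectors $f_j$ together with the $e_i$ really form a basis of $P$, and not merely a generating set.
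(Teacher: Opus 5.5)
Your proof is correct and complete. The paper gives no argument of its own for this lemma; it simply quotes it from \cite{Chat}, and it is a classical fact that appears in Milnor--Husemoller's book on symmetric bilinear forms. Your route is the standard one:
\begin{itemize}
\item use the projective-implies-free hypothesis only to make the Lagrangian $N$ free;
\item identify $P/N\cong N^{*}$ via $p\mapsto\langle p,-\rangle|_{N}$, which is surjective because $N$ is a direct summand and has kernel exactly $N^{\perp}=N$;
\item lift the dual basis to vectors $f_j$ with $\langle e_i,f_j\rangle=\delta_{ij}$;
\item when $2\in R^{*}$, apply the unitriangular change $f_j\mapsto f_j-\tfrac12\sum_i a_{ij}e_i$, which gives $\langle f_i',f_j'\rangle=\tfrac12(a_{ij}-a_{ji})=0$.
\end{itemize}

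The step you flagged as the main obstacle is routine. Spanning follows by writing $\bar p$ as a combination of the $\bar f_j$ and correcting by an element of $N$. Independence follows by first reducing modulo $N$ to kill the $f_j$-coefficients, then using that the $e_i$ form a basis of $N$.

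Two small points are worth stating explicitly in a write-up. First, $P$ comes out free automatically, so the hypothesis is genuinely needed only for $N$. Second, the ``if'' direction uses no hypothesis on $R$ at all.
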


\begin{lemma}[\cite{Chat}, Lemma 4.22]
    Let $(P, \langle, \rangle)$ be an orthogonal $R$-module, and $\alpha \in \TransO(P, \langle, \rangle)$. 
Then there exists $\beta(X) \in \TransO(P[X], \langle, \rangle)$ such that 
$\beta(1) = \alpha$, and $\beta(0) = Id$.

\end{lemma}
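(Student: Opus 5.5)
The statement is the orthogonal analogue of Lemma 3.7 and of \cite{ChatRao}, Lemma 5.22: for an orthogonal module $(P,\langle,\rangle)$ and $\alpha\in\TransO(P,\langle,\rangle)$ we need $\beta(X)\in\TransO(P[X],\langle,\rangle_{\otimes R[X]})$ with $\beta(1)=\alpha$ and $\beta(0)=Id$. The plan is to deform each generating transvection separately and take the product, as in the proofs of Lemma \ref{lin homotopy} and Lemma \ref{sym homotopy}.

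First, write $\alpha=\tau_1\cdots\tau_r$, where each factor is an orthogonal transvection or the inverse of one. Each $\tau_k$ has the form $\tau_k(p)=p-\langle u,p\rangle v+\langle v,p\rangle u$, with $u,v$ isotropic, $\langle u,v\rangle=0$, and $u$ or $v$ unimodular. The inverse $p\mapsto p+\langle u,p\rangle v-\langle v,p\rangle u$ is again of this form with $v$ replaced by $-v$, so we may assume every factor is a transvection.

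Second, deform each factor. If $u$ is unimodular, keep $u$ and replace $v$ by $vX\in P[X]$; if only $v$ is unimodular, keep $v$ and replace $u$ by $uX$. We check the case where $u$ is unimodular; the other is symmetric. In $(P[X],\langle,\rangle_{\otimes R[X]})$ we have $\langle vX,vX\rangle=\langle v,v\rangle X^2=0$, $\langle u,u\rangle=0$ and $\langle u,vX\rangle=\langle u,v\rangle X=0$. Moreover $u$ stays unimodular in $P[X]$: if $\theta(u)=1$, then $\theta\otimes R[X]$ does the job. Hence
\[\tau_k X(p)=p-\langle u,p\rangle vX+\langle vX,p\rangle u,\]
extended $X$-linearly to $\sum p_iX^i$, is an orthogonal transvection of $P[X]$. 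Specialising gives $\tau_kX|_{X=1}=\tau_k$ and $\tau_kX|_{X=0}=Id$, because the correction terms are multiples of $X$.

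Finally, set $\beta(X)=\prod_k\tau_kX$. This lies in $\TransO(P[X],\langle,\rangle_{\otimes R[X]})$, and since evaluation at $X=0,1$ is a ring homomorphism compatible with composition, $\beta(1)=\alpha$ and $\beta(0)=Id$.

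The only step needing care is the second one: isotropy and unimodularity must survive the deformation, and only the non-unimodular vector may be scaled by $X$. The form is bilinear rather than sesquilinear, so $\langle vX,vX\rangle$ picks up $X^2$, but it still vanishes. No other obstacle is expected.
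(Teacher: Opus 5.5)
Your proposal is correct and follows essentially the paper's approach. The paper cites this statement rather than proving it, but its proof of the relative analogue, Lemma~\ref{orth homotopy}, does exactly what you do: scale the non-unimodular vector of each transvection by $X$ and take the product. Your explicit reduction of inverses to transvections (replacing $v$ by $-v$) is a small detail the paper leaves implicit.
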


\noindent We establish a relative version of Lemma 5.16.

\begin{lemma}
\label{orth homotopy}
    Let $(P, \langle, \rangle)$ be an orthogonal $R$-module, and $\alpha \in \TransO(P, IP, \langle, \rangle)$. 
Then there exists $\beta(X) \in \TransO(P[X], IP[X], \langle, \rangle)$ such that 
$\beta(1) = \alpha$, and $\beta(0) = Id$.

\end{lemma}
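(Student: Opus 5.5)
The plan is to imitate the proofs of Lemma \ref{lin homotopy} and Lemma \ref{sym homotopy}: introduce a parameter $X$ into each generator, placing it on the factor that lies in $IP$. First write $\alpha \in \TransO(P, IP, \langle,\rangle)$ as a finite product of relative orthogonal transvections (and their inverses, which are again of the same shape). Each such $\sigma$ has the form
\[
\sigma(p) = p - \langle u, p\rangle v + \langle v, p\rangle u,
\]
with $u, v$ isotropic, $\langle u, v\rangle = 0$, one of $u, v$ unimodular, and one of $u, v$ in $IP$.

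The key step is to choose, for each generator, the vector in $IP$ and multiply it by $X$. If $v \in IP$, I set
\[
\sigma X(p) = p - \langle u, p\rangle vX + \langle vX, p\rangle u .
\]
If instead $u \in IP$, I replace $u$ by $uX$. I then extend $\sigma X$ to $P[X]$ coefficientwise, as in the symplectic case.

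Next I check that $\sigma X$ is a relative orthogonal transvection of $(P[X], \langle,\rangle_{\otimes R[X]})$. Since the form is bilinear over $R[X]$, we have $\langle vX, vX\rangle = \langle v, v\rangle X^2 = 0$. Likewise $\langle u, vX\rangle = \langle u, v\rangle X = 0$, and $u$ stays isotropic. The unscaled vector is unchanged, so whichever of $u, v$ was unimodular in $P$ remains unimodular in $P[X]$ via the extended linear functional; this needs a small case remark. Finally, the scaled vector lies in $IP[X]$.

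With this, I define $\beta(X)$ as the corresponding product of the $\sigma X$, taken in the same order as in $\alpha$. Then $\beta(X) \in \TransO(P[X], IP[X], \langle,\rangle)$, and specializing gives $\beta(1) = \alpha$ and $\beta(0) = Id$, since each $\sigma X$ becomes $\sigma$ at $X = 1$ and the identity at $X = 0$.

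The main obstacle is the unimodularity requirement in the case where the unimodular vector is also the one lying in $IP$. Multiplying it by $X$ destroys unimodularity, and the unscaled vector need not be unimodular. I would handle this in one of two ways. The first is to read the definition of relative transvection as in the symplectic case, where the unimodular vector and the vector in $IP$ are different ones; this is implicit in the paper's conventions. The second, if needed, uses the antisymmetry $\sigma_{u,v} = \sigma_{v,u}^{-1}$. This lets me swap the roles of $u$ and $v$ so that the scaled vector is always the non-unimodular one in $IP$, reducing to the case already treated.
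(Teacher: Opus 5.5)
Your proposal is correct and is essentially the paper's proof. Like the paper, you scale whichever of $u,v$ lies in $IP$ by $X$, extend coefficientwise to $P[X]$, and take the product in the same order, under the paper's reading that the vector in $IP$ is the non-unimodular one.

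The obstacle you flag never genuinely occurs. A unimodular $u\in IP$ forces $1=\theta(u)\in I$, hence $I=R$, so $v\in IP=P$ can be scaled instead; this justifies your first reading. Your second fix would not help: the swap $\sigma_{u,v}=\sigma_{v,u}^{-1}$ only relabels the vectors and does not change which one is unimodular and which lies in $IP$.
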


\noindent \textit{Proof:} Note that $\alpha \in \TransO(P, IP, \langle , \rangle)$ is a product of relative orthogonal transvections of the form $\tau$, where $\tau$ takes $p \in P$ to $p - \langle u, p \rangle v + \langle v, p \rangle u$, where $u,v\in P$ are isotropic, i.e., $\langle u, u \rangle = \langle v, v \rangle = 0$, with $\langle u, v \rangle = 0$, with either $u\in$ Um$(P)$ or $v\in$ Um$(P)$, and either $v \in IP$ (if $u\in$ Um$(P)$) or $u \in IP$ (if  $v\in$ Um$(P)$). Define $\tau X$ as follows: \[\text{For }p \in P, \text{ set }\tau X(p)=\begin{cases}p - \langle u, p \rangle vX + \langle vX, p \rangle u, \text{ if }u\in \mathrm{Um}(P), v\in IP\\p - \langle uX, p \rangle v + \langle v, p \rangle uX,\text{ if }v\in \mathrm{Um}(P), u \in IP.\end{cases}\]
\[\tau X\bigg(\sum_i p_iX^i\bigg)=\sum_i \tau X(p_i)X^i.\] Here, $uX$ represents $u$ times $X$, and $vX$ represents $v$ times $X$. Note that $\tau X$ is a relative orthogonal transvection, since in the first case $u \in$ Um$(P[X])$, $vX \in IP[X]$, and in the second case $v \in$ Um$(P[X])$, $uX \in IP[X]$. We set $\beta(X)$ to be the product of elements of the form $\tau X$, whenever $\tau$ appears in the expression of $\alpha$. Then $\beta(X) \in \TransO(P[X], IP[X], \langle , \rangle)$, with $\beta(1) = \alpha$, and $\beta(0) = Id$.  $\hfill \square$

\begin{lemma}[\cite{Chat}, Theorem 4.23]
\label{rot=reot}
   Let $R$ be a commutative ring with $R = 2R$, and let $I$ be an ideal of
$R$. Let $(P, \langle, \rangle)$ be a locally split orthogonal $R$-module with $P$ of rank $2n$, $n \geq 2$,
and $Q = R^2 \oplus P$ with the induced form on $\mathbb{H}(R) \oplus P$. Then $\TransO(Q, IQ, \langle, \rangle) =
\ETransO(Q, IQ, \langle, \rangle)$.
\end{lemma}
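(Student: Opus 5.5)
The inclusion $\ETransO(Q, IQ, \langle,\rangle) \subseteq \TransO(Q, IQ, \langle,\rangle)$ is formal, and I will handle it first. The reverse inclusion I plan to get by a homotopy plus local-global argument, modelled on Proof 2 of Theorem \ref{RLT normality}.

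\textbf{Easy inclusion.} For $q \in IP$, the generators $\rho(q)$ and $\mu(q)$ of $\ETransO(IQ,\langle,\rangle)$ are orthogonal transvections. For $\rho(q)$ take $u=(0,1,0)$, which is unimodular, and $v=(0,0,q)\in IQ$; $\mu(q)$ is handled the same way with $u=(1,0,0)$. So they lie in $\TransO(Q,IQ,\langle,\rangle)$.

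The normal closure in $\ETransO(Q,\langle,\rangle)$ stays inside $\TransO(Q,IQ,\langle,\rangle)$. Conjugating $\tau(p)=p-\langle u,p\rangle v+\langle v,p\rangle u$ by an isometry $\gamma$ gives the transvection attached to $(\gamma(u),\gamma(v))$, exactly as in Proof 1 of Theorem \ref{RST normality}. Under this, isotropy and orthogonality are preserved, unimodularity is preserved, and membership in $IQ$ is preserved.

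\textbf{Reverse inclusion.} Let $\alpha \in \TransO(Q,IQ,\langle,\rangle)$. By Lemma \ref{orth homotopy} there is $\alpha(X)\in \TransO(Q[X],IQ[X],\langle,\rangle)$ with $\alpha(1)=\alpha$ and $\alpha(0)=Id$. Then I check the localizations at each maximal ideal $\mathfrak{m}$:
\begin{enumerate}
\item Over $R_\mathfrak{m}$ the module $P_\mathfrak{m}$ is free. Since $P$ is locally split and $2$ is a unit, Lemma \ref{split} gives a basis in which the form has matrix $\varepsilon^\top\widetilde{\psi}_n\varepsilon$ for some $\varepsilon\in \mathrm{GL}_{2n}(R_\mathfrak{m})$. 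The standard matrix $\widetilde{\psi}_n$ and $\bigl(\begin{smallmatrix}0&I\\I&0\end{smallmatrix}\bigr)$ differ by a permutation of basis vectors.
\item By Lemma \ref{reot conj} (applied over $R_\mathfrak{m}[X]$, where $P_\mathfrak{m}[X]$ is free), conjugation by $I_2\perp\varepsilon$ carries both $\TransO(Q_\mathfrak{m}[X],IQ_\mathfrak{m}[X])$ and $\ETransO(Q_\mathfrak{m}[X],IQ_\mathfrak{m}[X])$ to the standard form $\widetilde{\psi}_{n+1}$.
\item For the standard form, Lemma \ref{reot=eo} says both groups equal $\EO_{2n+2}(R_\mathfrak{m}[X],I_\mathfrak{m}[X])$.
\end{enumerate}
Hence $\alpha(X)_\mathfrak{m}\in \ETransO(Q_\mathfrak{m}[X],IQ_\mathfrak{m}[X])$ for every $\mathfrak{m}$. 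A relative orthogonal local-global principle, the analogue of Lemma \ref{lin LG} and Lemma \ref{sym LG}, then gives $\alpha(X)\in\ETransO(Q[X],IQ[X])$. Specialising at $X=1$ gives $\alpha\in \ETransO(Q,IQ,\langle,\rangle)$.

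\textbf{Main obstacle.} The orthogonal local-global principle is not stated earlier in the paper, so it has to be proved. I would follow Quillen patching in the style of Bak--Basu--Rao:
\begin{enumerate}
\item Prove a dilation lemma. If $s\in R$ and $\beta(X)\in \ETransO(Q_s[X],IQ_s[X])$ with $\beta(0)=Id$, then for $d\gg0$ the automorphism $\beta(s^dX)$ lifts to an element of $\ETransO(Q[X],IQ[X])$. This requires commutator formulas for $\rho(q)$ and $\mu(q)$ with entries divisible by powers of $s$, working in the relative (normal closure) setting.
\item Run the usual patching argument. For each $\mathfrak{m}$ choose $s\notin\mathfrak{m}$ such that $\alpha(X)_s$ is relatively elementary. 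Cover $R$ by finitely many such $s_i$ with $\sum c_is_i^{d}=1$. Then write
\[
\alpha(X)=\alpha(X)\alpha(b_1X)^{-1}\cdots\alpha(b_kX),
\]
with each factor relatively elementary by the dilation lemma.
\end{enumerate}
The delicate point is the dilation lemma for the relative normal-closure group. The hypothesis $n\ge2$, i.e.\ rank of $Q$ at least $6$, is needed there, matching the range $2n+2\ge 6$ of Lemma \ref{KopSus normality}.
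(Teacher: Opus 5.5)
Your argument is correct. The paper gives no proof of its own here; it only cites \cite{Chat}, Theorem 4.23. Your route is the same homotopy/local/local-global pattern the paper uses in its Proof~2 arguments: the relative homotopy via Lemma~\ref{orth homotopy}, the local equality via Lemmas~\ref{split}, \ref{reot conj} and \ref{reot=eo}, then local-global and specialisation at $X=1$.

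The one adjustment concerns your ``main obstacle''. The relative orthogonal local-global principle is already stated in the paper as Lemma~\ref{orth LG} (cited from \cite{Chat}, Lemma 4.21), so you can cite it instead of reproving it. If you do reprove it, your telescoping product is not right as written; it must run through partial sums:
\[
\alpha(X)=\prod_{i=1}^{k}\alpha(\lambda_iX)\,\alpha(\lambda_{i+1}X)^{-1},\qquad \lambda_i=b_i+\cdots+b_k,\quad \lambda_{k+1}=0 .
\]
Each factor is then $\gamma_i(b_iX,\lambda_{i+1}X)$ with $\gamma_i(X,Y)=\alpha(X+Y)\alpha(Y)^{-1}$, and the dilation lemma is applied over $R[Y]$.
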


\begin{lemma}[\cite{Chat}, Lemma 4.21]
\label{orth LG}
    Let $R$ be a commutative ring with $R = 2R$, and let $I$ be an ideal of
$R$. Let $(P, \langle, \rangle)$ be a locally split orthogonal $R$-module with $P$ of rank $2n$, $n \geq 2$,
and $Q = R^2 \oplus P$ with the induced form on $\mathbb{H}(R) \oplus P$. Also assume that over the local ring $R_{\mathfrak{m}}$ for any maximal ideal $\mathfrak{m}$ of $R$, the bilinear form $\langle , \rangle$ corresponds to the symmetric matrix $\widetilde{\varphi}_{\mathfrak{m}}$ (with respect to some basis). Let $\alpha(X) \in \operatorname{O}(Q[X])$, with
$\alpha(0) = Id$. If $\alpha(X)_{\mathfrak{m}} \in \operatorname{ETrans}_{\operatorname{O}}(Q_{\mathfrak{m}}[X], IQ_{\mathfrak{m}}[X], \langle, \rangle_{\widetilde{\psi_1} \perp \widetilde{\varphi}_{\mathfrak{m}}})$, for each maximal ideal
$\mathfrak{m}$ of $R$, then $\alpha(X) \in \operatorname{ETrans}_{\operatorname{O}}(Q[X], IQ[X], \langle, \rangle)$.
\end{lemma}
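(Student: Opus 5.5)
The plan is to run the standard Quillen-style patching argument, adapted to the relative orthogonal elementary transvection group. Throughout, I write $G(A) = \ETransO(Q\otimes A, IQ\otimes A, \langle,\rangle)$ for an $R$-algebra $A$. Following the proof of Lemma \ref{lin LG} in the linear case, I define
\[
J = \{ a \in R : \alpha(X)_a \in G(R_a[X]) \}.
\]
The goal is to show that $J$ is an ideal of $R$ not contained in any maximal ideal, hence $J = R$; taking $a=1$ then gives the conclusion. That $J$ meets the complement of every maximal ideal is the easy step. By hypothesis $\alpha(X)_{\mathfrak m}$ is a finite product of conjugates of $\rho(q)$, $\mu(q)$ with $q \in IP_{\mathfrak m}[X]$ by elementary transvections. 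Only finitely many denominators occur, so there is $a \notin \mathfrak m$ with $\alpha(X)_a \in G(R_a[X])$. To guarantee this, I would first replace $R$ by $R_s$ for a suitable $s\notin\mathfrak m$ over which $P$ is free and $\langle,\rangle$ has matrix $\widetilde{\varphi}_{\mathfrak m}$, using Lemma \ref{reot conj} to move freely between the bases.

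The heart of the argument is a relative \emph{dilation lemma}. Let $a \in R$ and $\beta(X) \in G(R_a[X])$ with $\beta(0)=Id$. The claim is that there is $N \gg 0$ such that for every $b \in (a^N)$, the element $\beta(bX)$ is the image of an element of $G(R[X])$ that is the identity at $X=0$. To prove it, I would reduce to generators. Write $\beta(X)$ as a product of conjugates $\eta\,\rho(xq)\,\eta^{-1}$ or $\eta\,\mu(xq)\,\eta^{-1}$, with $\eta$ a product of elementary orthogonal transvections over $R_a[X]$ and $q \in IP$. Then substitute $X \mapsto a^N X$. Next, using the commutator relations among the $\rho(q)$, $\mu(q)$, I would show inductively that conjugating a relative generator carrying a high power of $a$ by an absolute generator with denominator $a^k$ produces a product of relative generators with coefficients in $R$. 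Concretely these relations are: $[\rho(q_1),\mu(q_2)]$ is an element supported on the $P$-component determined by $\langle q_1,q_2\rangle$ and by $q_1,q_2$, and it can itself be rewritten via transvections of $P$. Because $P$ is only projective, I would carry this out after choosing a finite set $p_1,\dots,p_r$ generating $P$ and $P^*$ (via the form). I would then express every $q$ as $\sum c_i p_i$ and use the additivity $\rho(q+q')=\rho(q)\rho(q')\cdot(\text{correction})$ to reduce to the generators $\rho(c p_i)$, $\mu(c p_i)$.

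Given the dilation lemma, the rest is formal. First, $J$ is an ideal. Closure under multiplication by $R$ is clear. For $a,b \in J$, apply the standard trick on $R[X,Y]$: set $\gamma(X,Y) = \alpha((X+Y)\cdot)\,\alpha(Y\cdot)^{-1}$-type splittings. These write $\alpha(X)$ over $R_{a}$ and $R_{b}$ as products of pieces of the form $\beta(a^N X)$ and $\beta'(b^N X)$, where $a^N$ and $b^N$ can be chosen so that $a^N u + b^N v$ covers $a+b$. Second, since $J = R$, I pick $a_1,\dots,a_k \in J$ and $c_i$ with $\sum a_i^N c_i = 1$. I then write
\[
\alpha(X) = \prod_i \alpha\bigl((c_1a_1^N+\dots+c_ia_i^N)X\bigr)\,\alpha\bigl((c_1a_1^N+\dots+c_{i-1}a_{i-1}^N)X\bigr)^{-1}.
\]
Each factor is of the form $\theta_i(c_ia_i^N X)$, where $\theta_i(Z)=\alpha((s+Z)X)\alpha(sX)^{-1}$ lies in $G(R_{a_i}[X,Z])$ and $\theta_i(0)=Id$. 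The dilation lemma, applied over $R[X]$ in the variable $Z$, shows that each factor lies in $G(R[X])$.

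I expect the main obstacle to be the dilation lemma, and specifically two points within it. The first is keeping the \emph{relative} condition $q\in IP$ under control while clearing denominators: conjugates by absolute elements must be rewritten as products of conjugates of relative generators, which is exactly the normal-closure structure of $\ETransO(Q,IQ)$. The second is the non-injectivity of $R\to R_a$. I would handle that by proving the lifting statement with an auxiliary variable and evaluating, so that the lift is only required to agree after localization. I would try first to imitate Suslin's and Kopeiko--Suslin's proofs for $\EO_{2n}(R,I)$, since Lemma \ref{reot=eo} identifies the groups locally. The rank hypothesis $n\ge 2$ is there so that enough room exists for the commutator formulas.
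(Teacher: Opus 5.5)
The paper does not prove this lemma; it is quoted from \cite{Chat}. Your outline has the standard shape of such local-global principles: a dilation lemma plus Quillen patching, as in Bak--Basu--Rao \cite{BakBasuRao}. However, the relative dilation lemma carries all the content, and it is not proved here. The mechanism you sketch for it also fails as stated, in two places.

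\emph{The normal form.} Substituting $X\mapsto a^NX$ helps only for factors whose parameter is divisible by $X$. Your conjugators $\eta$ over $R_a[X]$ keep their denominators. More seriously, using $\beta(0)=Id$ to strip off constant terms inside the \emph{relative} group leaves mixed commutators $[h(X),g]$. Here $h(X)\in\ETransO(Q_a[X])$ has $h(0)=Id$, and $g=\rho(q_0)$ or $\mu(q_0)$ is a constant relative generator with $q_0\in IP_a$ carrying denominators. These are not of the form your induction handles. Putting their dilates into the image of $\ETransO(Q[X],IQ[X])$ needs an extra step, for instance splitting the dilated parameter so that its surplus powers of $a$ absorb the denominators of $q_0$.

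\emph{The base case.} Conjugation among the $\rho$'s, or among the $\mu$'s, is harmless, since they commute. The real case is conjugating $\rho(a^dq)$ by $\mu(y/a^k)$, and this needs an explicit identity in terms of $\rho$ and $\mu$ over $R$. To first order, $[\rho(q_1),\mu(q_2)]$ is a diagonal term on $\mathbb{H}(R)$ governed by $\langle q_1,q_2\rangle$, plus the Eichler-type map $p\mapsto\langle p,q_1\rangle q_2-\langle p,q_2\rangle q_1$ on $P$. Rewriting it via transvections of $P$ takes you outside the generating set $\{\rho(q),\mu(q)\}$ and controls neither integrality nor the relative condition. This is where the rank hypothesis must enter, and the sketch does not say how.

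You also cannot fall back on Suslin/Kopeiko--Suslin through Lemma \ref{reot=eo}. That commutator calculus is written in a hyperbolic basis of $P$, which exists only after localizing. The lift, by contrast, must be a product of relative elementary transvections of $Q$ over $R$ itself, where $Q$ is not free.

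The patching half also fails as written.

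\emph{Non-injectivity.} Your lifts agree with the factors $\alpha(s_iX)\alpha(s_{i-1}X)^{-1}$ only after inverting $a_i$. Since $R\to R_{a_i}$ need not be injective, this does not put the factors in $G(R[X])$. Being ``only required to agree after localization'' is exactly what is insufficient.

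\emph{Circular exponent.} $N$ is fixed before the $c_i$, but $\theta_i$ depends on $s=\sum_{j<i}c_ja_j^N$. So taking $N$ from the dilation lemma applied to $\theta_i$ is circular.

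The standard repair is as follows.
\begin{itemize}
\item Make $s$ a variable $Y$, and dilate the \emph{global} element $\alpha((Y+Z)X)\alpha(YX)^{-1}\in\mathrm{O}(Q[X,Y,Z])$.
\item Compare this element with its lift. The quotient is the identity at $Z=0$ and over $R_{a_i}$.
\item Its finitely many $Z$-coefficients (after embedding $Q$ in a free module) are $a_i$-torsion, so it becomes the identity after $Z\mapsto a_i^MZ$.
\end{itemize}
Equivalently, use Suslin's ideal $\{a\in R:\alpha(X+aY)\alpha(X)^{-1}\in G(R[X,Y])\}$, which is an ideal by substitutions alone. Your $J$, defined by localization, is not visibly an ideal. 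In any case, once you have finitely many $a_i$ with $\sum c_ia_i^N=1$, that claim is superfluous.
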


\noindent Now we prove the main result of this section, first by direct calculations. Then, we give a second proof of the result using local-global principle.
\begin{theorem}
\label{ROT normality}
    Let $R$ be a commutative ring with $R = 2R$, and let $I$ be an ideal of
$R$. Let $(P, \langle, \rangle)$ be a locally split orthogonal $R$-module with $P$ of rank $2n$, $n \geq 2$,
and $Q = R^2 \oplus P$ with the induced form on $\mathbb{H}(R) \oplus P$. Then $\operatorname{ETrans}_{\operatorname{O}}(Q, IQ, \langle , \rangle)$ is normal in $\operatorname{O}(Q)$.
\end{theorem}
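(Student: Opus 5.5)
The plan follows the two earlier theorems: reduce to the group of all relative transvections, then show that conjugating a single relative transvection by an isometry gives another one. By Lemma \ref{rot=reot}, whose hypotheses are exactly those of Theorem \ref{ROT normality}, we have $\ETransO(Q, IQ, \langle , \rangle) = \TransO(Q, IQ, \langle , \rangle)$. So it suffices to prove that $\TransO(Q, IQ, \langle , \rangle)$ is normal in $\operatorname{O}(Q)$. Since this group is generated by the relative orthogonal transvections, it is enough to check that each such generator has its conjugate back in the group.

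Fix $\gamma \in \operatorname{O}(Q)$ and a relative orthogonal transvection $\tau(p) = p - \langle u, p\rangle v + \langle v, p\rangle u$. Here $u, v$ are isotropic with $\langle u, v\rangle = 0$, one of them is unimodular, and the other lies in $IQ$. Put $u_1 = \gamma(u)$ and $v_1 = \gamma(v)$. Writing $q = \gamma^{-1}(p)$ and using $R$-linearity of $\gamma$, we get
\[
\gamma\tau\gamma^{-1}(p) = p - \langle u, \gamma^{-1}(p)\rangle v_1 + \langle v, \gamma^{-1}(p)\rangle u_1 .
\]
Since $\gamma^{-1}$ is an isometry, $\langle u, \gamma^{-1}(p)\rangle = \langle \gamma(u), p\rangle = \langle u_1, p\rangle$, and likewise for $v$. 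Hence
\[
\gamma\tau\gamma^{-1}(p) = p - \langle u_1, p\rangle v_1 + \langle v_1, p\rangle u_1 .
\]

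It remains to check that $(u_1, v_1)$ satisfies the defining conditions.
\begin{itemize}
\item Isotropy and orthogonality are preserved because $\gamma$ is an isometry: $\langle u_1, u_1\rangle = \langle u, u\rangle = 0$, and similarly $\langle v_1, v_1\rangle = 0$ and $\langle u_1, v_1\rangle = 0$.
\item Unimodularity is preserved by automorphisms. If $\theta(u) = 1$, then $(\theta \circ \gamma^{-1})(u_1) = 1$.
\item Membership in $IQ$ is preserved, since $\gamma(IQ) \subseteq IQ$ by $R$-linearity.
\end{itemize}
So $\gamma\tau\gamma^{-1}$ is again a relative orthogonal transvection, and normality follows.

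The only real content lies in Lemma \ref{rot=reot}; everything after it is formal. As a cross-check, or as an alternative proof mirroring Proof 2 of Theorem \ref{RST normality}, I would use the local-global route:
\begin{enumerate}
\item Take $\delta \in \ETransO(Q, IQ, \langle , \rangle)$ and lift it via Lemma \ref{orth homotopy} to $\delta(X)$ with $\delta(0) = Id$.
\item Set $\beta(X) = \gamma(X)\,\delta(X)\,\gamma(X)^{-1}$.
\item Localize at each maximal ideal $\mathfrak{m}$. By local splitness, Lemma \ref{split}, and $2$ being invertible, $\widetilde{\varphi}_{\mathfrak{m}} = \varepsilon^\top \widetilde{\psi}_n \varepsilon$ for some $\varepsilon$.
\item Transport to the standard form using Lemmas \ref{orth conj} and \ref{reot conj}.
\item Apply Lemma \ref{reot=eo} and Lemma \ref{KopSus normality}. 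The latter needs $2n+2 \ge 6$, which holds because $n \ge 2$.
\item Conclude with Lemma \ref{orth LG}.
\end{enumerate}
The main obstacle in this second route is the bookkeeping of the conjugation by $I_2 \perp \varepsilon$. One has to check that it carries $\TransO$ for $\widetilde{\psi}_1 \perp \widetilde{\varphi}_{\mathfrak{m}}$ onto $\EO_{2n+2}(R_{\mathfrak{m}}[X], I_{\mathfrak{m}}[X])$ compatibly with the orthogonal groups. The direct computation avoids this issue entirely.
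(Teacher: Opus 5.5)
Your proposal is correct and is essentially the paper's own argument. Proof 1 in the paper likewise uses Lemma~\ref{rot=reot} to replace $\ETransO(Q,IQ,\langle,\rangle)$ by $\TransO(Q,IQ,\langle,\rangle)$, and then checks that $\gamma\tau\gamma^{-1}$ is the relative orthogonal transvection attached to $(\gamma(u),\gamma(v))$; your sketched local-global alternative follows the paper's Proof 2 step for step.
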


\noindent \textit{Proof 1:} 
Note that $\operatorname{ETrans}_{\operatorname{O}}(Q, IQ, \langle , \rangle)=\operatorname{Trans}_{\operatorname{O}}(Q, IQ, \langle , \rangle)$  by Lemma \ref{rot=reot}. So, it suffices to show that $\operatorname{Trans}_{\operatorname{O}}(Q, IQ, \langle , \rangle)$ is normal in $\operatorname{O}(Q)$. Let $\gamma\in\operatorname{O}(Q)$, and $\tau\in\rm{Trans}_{\rm{O}}$$(Q,IQ,\langle,\rangle)$ be a relative orthogonal transvection. By definition, we have
\begin{equation*}
	\tau(p)=p-\langle u,p\rangle v+\langle v,p \rangle u
\end{equation*}
where $u, v \in Q$ are isotropic, i.e., $\langle u, u \rangle = \langle v, v \rangle = 0$ with $\langle u, v \rangle = 0$, and either $u$ is unimodular with $v\in IQ$ or $v$ is unimodular with $u\in IQ$.

\vskip 2mm
\noindent Let $p\in Q$, and $q=\gamma^{-1}(p)$. Then,
\begin{eqnarray*}
	\gamma\tau\gamma^{-1}(p)
	&=&\gamma\tau(q)\\
	&=&\gamma (q-\langle u,q\rangle v+\langle v,q \rangle u)\\
	&=&\gamma(q)-\langle u,q\rangle \gamma(v)+\langle v,q \rangle \gamma(u)\hfill \text{ \hspace{0.5cm}[as $\gamma$ is $R$-linear]}\\
	&=&p-\langle \gamma(u),\gamma(q)\rangle \gamma(v)+\langle \gamma(v),\gamma(q) \rangle \gamma(u)\text{ \hspace{0.5cm}[as $\gamma\in\mathrm{O}(Q)$]}\\
    &=&p-\langle \gamma(u),p\rangle \gamma(v)+\langle \gamma(v),p \rangle \gamma(u)
\end{eqnarray*}

\noindent Note that since $u,v$ are isotropic, and $\gamma\in\operatorname{O}(Q)$, we get that $\gamma(u),\gamma(v)$ are isotropic. Also, since  either $u$ is unimodular with $v\in IQ$ or $v$ is unimodular with $u\in IQ$, either $\gamma(u)$ is unimodular with $\gamma(v)\in IQ$, or $\gamma(v)$ is unimodular with $\gamma(u)\in IQ$. Therefore, $\gamma\tau\gamma^{-1}$ is a relative orthogonal transvection, and hence $\operatorname{Trans}_{\operatorname{O}}(Q,IQ,\langle,\rangle)$ is a normal subgroup of $\operatorname{O}(Q)$. $\hfill \square$

\vskip 4mm
\noindent \textit{Proof 2:}
Let $\delta \in \operatorname{ETrans}_{\operatorname{O}}(Q, IQ, \langle , \rangle)$. Note that $\operatorname{ETrans}_{\operatorname{O}}(Q, IQ, \langle , \rangle)=\operatorname{Trans}_{\operatorname{O}}(Q, IQ, \langle , \rangle)$ by Lemma \ref{rot=reot}, and hence $\delta \in \operatorname{Trans}_{\operatorname{O}}(Q, IQ, \langle , \rangle)$. 

\vskip 2mm
\noindent By Lemma \ref{orth homotopy}, there exists $\delta(X) \in \operatorname{Trans}_{\operatorname{O}}(Q[X], IQ[X], \langle , \rangle)$ such that $\delta(1)=\delta$, and $\delta(0)=Id$. For $\gamma \in \operatorname{O}(Q)$, define $\gamma(X)\in\mathrm{O}(Q[X])$ as $\gamma(X)(\sum q_jX^j)=\sum \gamma(q_j)X^j$. Define $\beta(X)=\gamma(X)\delta(X)\gamma(X)^{-1}$. 

\vskip 2mm
\noindent Note that for every maximal ideal $\mathfrak{m}$ of $R$, the projective $R_\mathfrak{m}$-module $Q_\mathfrak{m}$ is free of $\operatorname{rank} \geq 6$. 

Let \(e_1, \ldots, e_{2n+2}\) be the standard basis of \(Q_{\mathfrak{m}}\) with respect to which the bilinear form on \(Q_{\mathfrak{m}}\) corresponds to \(\widetilde{\psi_1} \perp \widetilde{\varphi}\). Since \((P_{\mathfrak{m}}, \langle , \rangle)\) is a split orthogonal \(R\)-module with \(R_{\mathfrak{m}} = 2R_{\mathfrak{m}}\), by Lemma \ref{split} we get \(\widetilde{\varphi} = \varepsilon^\top \widetilde{\psi_n} \varepsilon\), for some \(\varepsilon \in \operatorname{GL}_{2n}(R_{\mathfrak{m}})\). So, by Lemma \ref{reot conj}, we get \[\operatorname{Trans_{O}}(Q_{\mathfrak{m}}[X], IQ_{\mathfrak{m}}[X], \langle , \rangle_{\widetilde{\psi_1} \perp \widetilde{\varphi}}) = (I_2 \perp \varepsilon)^{-1} \operatorname{Trans_{O}}(Q_{\mathfrak{m}}[X], IQ_{\mathfrak{m}}[X], \langle , \rangle_{\widetilde{\psi}_{n+1}}) (I_2 \perp \varepsilon),\]
and by Lemma \ref{orth conj}, we get \[\operatorname{O}_{\widetilde{\psi_1} \perp \widetilde{\varphi}}(R_{\mathfrak{m}}[X]) = (I_2 \perp \varepsilon)^{-1} \operatorname{O}_{\widetilde{\psi}_{n+1} } (R_{\mathfrak{m}}[X]) (I_2 \perp \varepsilon).\]
\vskip 2mm
\noindent Note that by Lemma \ref{KopSus normality}, we have $\operatorname{EO}_{2n+2}(R_{\mathfrak{m}}[X], I_{\mathfrak{m}}[X])$ is normal in $\operatorname{O}_{2n+2}(R_{\mathfrak{m}}[X])$, and by Lemma \ref{reot=eo}, we have \[\operatorname{TransO}(Q_{\mathfrak{m}}[X], IQ_{\mathfrak{m}}[X], \langle , \rangle_{\widetilde{\psi}_{n+1}})=\operatorname{EO}_{2n+2}(R_{\mathfrak{m}}[X], I_{\mathfrak{m}}[X]).\] So, by a similar calculation as done in proof 2 of Theorem \ref{RST normality}, we get that $\beta(X)_{\mathfrak{m}}$ belongs to $\operatorname{TransO}(Q_{\mathfrak{m}}[X], IQ_{\mathfrak{m}}[X], \langle , \rangle_{\widetilde{\psi_{1}}\perp\widetilde{\varphi}})=\operatorname{ETrans}_\mathrm{O}(Q_{\mathfrak{m}}[X], IQ_{\mathfrak{m}}[X], \langle , \rangle_{\widetilde{\psi_{1}}\perp\widetilde{\varphi}})$, for all maximal ideals $\mathfrak{m}$ of $R$. Hence, by Lemma \ref{orth LG}, $\beta(X)$ belongs to $\operatorname{ETrans}_{\operatorname{O}}(Q[X], IQ[X], \langle , \rangle)$, and $\beta(1)=\gamma\delta\gamma^{-1}$ belongs to $\operatorname{ETrans}_{\operatorname{O}}(Q, IQ, \langle , \rangle)$. Therefore, $\operatorname{ETrans}_{\operatorname{O}}(Q, IQ, \langle , \rangle)$ is a normal subgroup of $\operatorname{O}(Q, \langle,\rangle)$.$\hfill \square$

\begin{remark}
In addition to the normal subgroups already considered, one may also consider the DSER orthogonal group, which is normal in \(\mathrm{O}(Q)\). For more details on the DSER orthogonal group, see \cite{AmbRao}.
\end{remark}

\vskip 2mm
\noindent

\begin{center}
\textbf{Acknowledgements}
\end{center}

\noindent The first author gratefully acknowledges the support and infrastructure provided by BITS-Pilani, Hyderabad campus, while also sincerely thanking the Council of Scientific and Industrial Research (CSIR), India, for the financial support provided through the research fellowship (CSIR File No. 09/1026(0035)/2020-EMR-I).

\end{document}